\newtheorem{theorem}{Theorem}[section]
\newtheorem{lemma}[theorem]{Lemma}
\newtheorem{proposition}[theorem]{Proposition}
\newtheorem{corollary}[theorem]{Corollary}
\newtheorem{remark}[theorem]{Remark}
\newtheorem{remarks}[theorem]{Remarks}
\newtheorem{example}[theorem]{Example}
\newtheorem{definition}[theorem]{Definition}
\newtheorem{notation}[theorem]{Notation}
\DeclareMathOperator{\limi}{{lim}}
\newcommand{\ilim}[1]{\,\underset{#1}{\underset{\to}{\limi}}\,}
\newcommand{\plim}[1]{\,\underset{#1}{\underset{\leftarrow}{\limi}}\,}
\newcommand{\leftmapsto}{\leftarrow\!\shortmid}
\DeclareMathOperator{\Hom}{{Hom}}
\DeclareMathOperator{\Spec}{{Spec}}
\DeclareMathOperator{\Coker}{{Coker}}
\DeclareMathOperator{\Ker}{{Ker}}
\DeclareMathOperator{\Ima}{{Im}}
\begin{document}

\title{Flat SML  modules and reflexive functors}

\author{Carlos Sancho, Fernando Sancho, Pedro Sancho}

%

\date{May 9, 2017}

\begin{abstract}
We give some functorial characterizations of flat strict Mittag-Leffler  modules. We characterize reflexive functors of modules with similar tools,  definitions and theorems.

\end{abstract}

\maketitle

\section{Introduction}

Let $R$ be a commutative (associative with unit) ring. 
Let $\mathcal R$ be the covariant functor from the category of commutative $R$-algebras to the rings
defined by  $\mathcal R(S):=S$ for any commutative  $R$-algebra $S$.
Let $M$ be an  $R$-module. Consider the functor of $\mathcal R$-modules, $\mathcal M$, defined by $\mathcal M(S):=M\otimes_R S$, for any commutative $R$-algebra $S$. $\mathcal M$ is said to be the   {\sl quasi-coherent}  $\mathcal R$-module associated with $M$.The functors
$$\aligned \text{Category of $R$-modules } & \to \text{ Category of quasi-coherent $\mathcal R$-modules }\\ M & \mapsto \mathcal M\\ \mathcal M(R) & \leftarrow\!\shortmid \mathcal M\endaligned$$
stablish an equivalence  of categories.
 Consider the dual functor
$\mathcal M^*\!\!:=\!\mathbb Hom_{\mathcal R}(\mathcal M,\!\mathcal R)$ defined by
$\mathcal M^*(S):=\Hom_S(M\otimes_RS,S)$.   In general, the canonical morphism  $M\to M^{**}$ is not an isomorphism, but surprisingly  $\mathcal M=\mathcal M^{**}$ (see \ref{reflex}), that is, $\mathcal M$ is a reflexive functor of $\mathcal R$-module. 
This result has many applications in Algebraic Geometry
 (see \cite{Pedro}), for example the Cartier duality of commutative affine groups and commutative formal groups.

Given an $R$-module $N$ we shall say that $\mathcal N^*$ is an $\mathcal R$-module scheme. 
 In  \cite{Amel2}, we proved that an
 $R$-module $M$ is a finitely generated  projective module  iff  $\mathcal M$ is an $\mathcal R$-module scheme. In  \cite{Pedro2}, we proved that  $M$ is a flat $R$-module iff $\mathcal M$ is a direct limit of $\mathcal R$-module schemes. We proved too that the following statements are equivalent:
 
\begin{enumerate}
\item  $M$ is a flat Mittag-Leffler module

\item $\mathcal M$ is the direct limit of its $\mathcal R$-submodule schemes.

\item The kernel of any morphism $\mathcal N^*\to \mathcal M$ 
is an $\mathcal R$-module scheme.
 
\item The kernel of any morphism $\mathcal R^n\to \mathcal M$ 
is an $\mathcal R$-module  scheme.

\end{enumerate}

In this paper we shall give some functorial characterizations  of flat strict Mittag-Leffler modules.
Mittag-Leffler conditions were first introduced by Grothendieck in \cite{Grot}, and deeply studied
by some authors, for example, Raynaud and Gruson in \cite{RaynaudG}. Flat strict Mittag-Leffler modules have also been
studied by Ohm and Rush under the name of ''trace modules'' in \cite{Ohm}, by Garfinkel, who calls
them ``universally torsionless'' in \cite{Garfinkel} and by Zimmermann-Huisgen, under the name of ``locally
projective modules'' in \cite{Zimmermann-Huisgen}. We prove the following theorem.
 
\begin{theorem} Let $M$ be an $R$-module. The following statements are equivalent.

 \begin{enumerate}

\item $M$ is a flat strict Mittag-Leffler module (see \cite[II 2.3.2]{RaynaudG}). That is, $M$ is flat and it is  isomorphic to a direct limit of finitely presented modules $F_i$, so that
for every $R$-module $N$ and every $i$ there exists a $j\geq i$ such that $$\Ima(\Hom_R(M,N)\to \Hom(F_i,N))=
\Ima(\Hom_R(F_j,N)\to \Hom(F_i,N)).$$

\item $\mathcal M=\ilim{i} \mathcal N_i^*$, where $\{\mathcal N_i^*\}$ is the set of the $\mathcal R$-submodule schemes of $\mathcal M$, and the natural morphisms $M^*\to N_i$ are surjective.

\item $\mathcal M^*$ is  dually separated, that is, the natural morphism
$$M\otimes_RS\to (M\otimes_RS)^{**}:=\Hom_S(\Hom_S(M\otimes_R S,S),S)$$
is injective, for any commutative $R$-algebra $S$.

\item The natural morphism $M\otimes_R N\to \Hom_R(M^*,N)$ is injective for every $R$-module $N$ (that is, $M$ is universally torsionless, see \cite{Garfinkel}).

\item There exists a monomorphism $\mathcal M\to \prod^I \mathcal R$.

\item $M$ is a flat Mittag-Leffler module and the morphism
$$M\otimes_R R/\mathfrak m\to \Hom_R(M^*,R/\mathfrak m)$$
is injective,  for every maximal ideal $\mathfrak m\subset R$.

\item The cokernel of every morphism $\mathcal M^*\to \mathcal N$ is quasi-coherent, for every quasi-coherent $\mathcal R$-module $\mathcal N$.

\item The cokernel of every morphism $\mathcal M^*\to \mathcal R$ is quasi-coherent (which is equivalent to saying that $M$ is a trace module, see \ref{trace2}).

\end{enumerate}
\end{theorem}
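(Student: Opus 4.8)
The plan is to prove the cycle of implications $(1)\Rightarrow(2)\Rightarrow(5)\Rightarrow(4)\Rightarrow(3)\Rightarrow(6)\Rightarrow(1)$, together with the easy equivalences $(7)\Leftrightarrow(8)$ and their linkage to the rest. The unifying idea is to translate each functorial condition on $\mathcal M$ into a concrete statement about the $R$-module $M$, using the equivalence of categories between $R$-modules and quasi-coherent $\mathcal R$-modules, the reflexivity $\mathcal M = \mathcal M^{**}$ from \ref{reflex}, and the earlier result from \cite{Pedro2} characterizing flat Mittag-Leffler modules as those $\mathcal M$ which are the direct limit of their $\mathcal R$-submodule schemes.

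First I would handle $(1)\Rightarrow(5)$: writing $M = \ilim{i} F_i$ with $F_i$ finitely presented and the strict Mittag-Leffler condition, one chooses for each $i$ a finite set of elements of $\Hom_R(M,R)$ that, restricted to $F_i$, generate $\Ima(\Hom_R(F_j,R)\to\Hom_R(F_i,R))$ in the relevant sense; assembling these over all $i$ produces a family $\{\phi_\alpha\}_{\alpha\in I}$ in $M^*$ whose product gives the desired monomorphism $\mathcal M\to\prod^I\mathcal R$ (here one uses flatness and the fact that the $\phi_\alpha$ separate points of each $F_j\otimes_R S$, hence of $M\otimes_R S$, for all $S$). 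The implication $(5)\Rightarrow(4)$ is formal: a monomorphism $\mathcal M\hookrightarrow\prod^I\mathcal R$ dualizes and one checks that $M\otimes_R N\to\Hom_R(M^*,N)$ factors compatibly, reducing injectivity to the case $M = \prod^I R$ or to a direct sum $R^{(I)}$, where it is clear. The implication $(4)\Rightarrow(3)$ is immediate by specializing $N = S$ and $M\rightsquigarrow M\otimes_R S$, once one notes that the torsionless condition is inherited under base change (which follows from $(4)$ being stated for \emph{every} $N$); and $(3)$ is just the definition of $\mathcal M^*$ being dually separated.

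The real content is in closing the cycle back to $(1)$, and I expect $(3)\Rightarrow(6)\Rightarrow(1)$ to be the main obstacle. For $(3)\Rightarrow(6)$ one must first show $M$ is flat Mittag-Leffler; the natural route is to show that dual separatedness forces the kernel of every $\mathcal R^n\to\mathcal M$ to be an $\mathcal R$-module scheme (characterization (4) of the list from \cite{Pedro2}) — this is where one works hardest, analyzing a finite free presentation and using that $(-)^{**}$ is left exact in the relevant sense — and then the injectivity modulo $\mathfrak m$ is the special case $S = R/\mathfrak m$ of $(3)$. For $(6)\Rightarrow(1)$ one uses that a flat Mittag-Leffler module is a direct limit of finitely presented $F_i$ with the Mittag-Leffler images stabilizing for $N = R$; the extra hypothesis modulo every maximal ideal, combined with Nakayama-type local arguments, upgrades this to the \emph{strict} Mittag-Leffler condition for all $N$. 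Finally $(2)$ is bridged by observing that $\mathcal M = \ilim{i}\mathcal N_i^*$ over submodule schemes holds for flat Mittag-Leffler $M$ by \cite{Pedro2}, and surjectivity of $M^*\to N_i$ is exactly the trace-module reformulation, giving $(2)\Leftrightarrow(8)$; the equivalence $(7)\Leftrightarrow(8)$ reduces a general target $\mathcal N$ to the free case by writing $\mathcal N$ as a quotient of a free module and chasing cokernels, and $(8)\Leftrightarrow$ ``trace module'' is recorded in \ref{trace2}. Throughout, the recurring technical tool is that forming $(-)^*$ turns direct limits into inverse limits and that quasi-coherence of a kernel or cokernel is detected by its behaviour under all base changes $S$, so most steps come down to a base-changed version of a statement already proved for $S = R$.
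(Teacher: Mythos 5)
Your plan is organized as a single cycle through the eight conditions, whereas the paper proves each condition equivalent to the hub condition (3) separately using the general machinery of Section 3; that difference of architecture would be fine, but several arrows of your cycle are asserted without the idea that makes them work, and at least two of your reductions fail as stated. First, your $(7)\Leftrightarrow(8)$ by ``writing $\mathcal N$ as a quotient of a free module and chasing cokernels'' does not close up: condition (8) only controls morphisms $\mathcal M^*\to\mathcal R$, and knowing that each component of $f\colon\mathcal M^*\to\mathcal R^n$ factors through $\mathcal I_k$ (with $I_k=\Ima f_{k,R}$) only factors $f$ through $\mathcal I_1\times\cdots\times\mathcal I_n$, not through the quasi-coherent module associated with $\Ima f_R$, which is what quasi-coherence of the cokernel actually requires (Lemma \ref{ImCk}); the paper must run an induction on the number of generators of $N$ (Theorem \ref{trace1}) precisely because of this. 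Second, in $(1)\Rightarrow(5)$ the claim that the chosen functionals ``separate points of each $F_j\otimes_RS$'' is false for a general finitely presented $F_j$ (take $F_j=R/I$, which may have no nonzero functionals at all); what must be separated are points of the image of $F_j\otimes_RS$ in $M\otimes_RS$, and that is exactly where the strict Mittag-Leffler condition and flatness have to be used rather than merely invoked.

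The two arrows you yourself flag as ``the main obstacle'' are left entirely open, and they are where the paper's real tools live. The paper does not argue through kernels of $\mathcal R^n\to\mathcal M$, nor does it ``upgrade'' the Mittag-Leffler condition by local arguments: it proves $(3)\Rightarrow(2)$ (Theorem \ref{BB}) and quotes \cite[4.5]{Pedro2} to get flat Mittag-Leffler; it proves $(6)\Rightarrow(3)$ (not $(6)\Rightarrow(1)$) by factoring any $\mathcal M^*\to\mathcal N$ through a finitely generated submodule and applying Nakayama (Theorem \ref{5.11}); and it extracts the strict stabilization in $(3)\Rightarrow(1)$ from the key Lemma \ref{lemaP}, namely $\Hom_{\mathcal R}(\mathcal M^*,{\mathcal P^*}_{qc})=\Hom_{\mathcal R}(\mathcal M^*,\mathcal P^*)$ for $P$ finitely presented and $M$ flat, converting a factorization of functors $\mathcal M^*\to\mathcal P_j^*\to\mathcal N\to\mathcal P_i^*$ into the required equality of images by evaluating on the square-zero algebras $S=R\oplus Q$. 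Your outline has no counterpart to Lemma \ref{lemaP} or to the $R\oplus N$ trick, which in the paper (via Theorems \ref{W1} and \ref{4.3B}) is also what identifies the pointwise injectivity statement in (3) with the functorial definition of dual separatedness --- a nontrivial step you treat as ``just the definition.'' Until these ingredients are supplied, the cycle does not close.
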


More generally we shall give some characterizarions of dually separated functors of $\mathcal R$-modules. 

\begin{theorem} Let $\mathbb M$ be a functor of $\mathcal R$-modules. The following statements are equivalent

\begin{enumerate}

\item $\mathbb M$ is  dually separated: The natural morphism $\mathbb M^*(S)\to \Hom_S(\mathbb M(S),S)$ is injective, for any commutative $R$-algebra $S$.                

\item The natural morphism $\Hom_{\mathcal R} (\mathbb M,\mathcal N)\to  \Hom_{R } (\mathbb M(R),N)$ is injective, for any $R$-module  $N$.

\item The natural morphism $\Hom_{\mathcal R} (\mathbb M,\mathbb N)\to  \Hom_{R } (\mathbb M(R),\mathbb N(R))$ is injective, for any dual functor $\mathbb N$.

\item The cokernel of every morphism $\mathbb M\to  \mathcal N$ is quasi-coherent, for any $R$-module $N$.

\hskip -1.75cm Assume that $\mathbb M$ is reflexive.

\item There exists a monomorphism $\mathbb M^*\to \prod^I\mathcal R$.

\hskip -1.75cm Now assume that $R$ is a field.

\item $\mathbb M^*=\ilim{i} \mathcal N_i^*$, where $\{\mathcal N_i\}$ is the set of the quasi-coherent quotient $\mathcal R$-modules of $\mathbb M$.

\end{enumerate}

\end{theorem}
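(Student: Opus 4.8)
The plan is to prove $(1)\Leftrightarrow(2)\Leftrightarrow(3)\Leftrightarrow(4)$ for an arbitrary functor $\mathbb M$ of $\mathcal R$-modules, then $(1)\Leftrightarrow(5)$ under the reflexivity hypothesis, and finally $(1)\Leftrightarrow(6)$ when $R$ is a field, running in parallel with the proof of the first theorem. Throughout I would use the identifications $\mathbb M^*(S)=\Hom_{\mathcal R}(\mathbb M,\mathcal S)$ (so that the morphism in $(1)$ is ``evaluation at the $S$-algebra $S$''), $\mathcal N=\mathcal N^{**}$ for quasi-coherent $\mathcal N$, the adjunction $\Hom_{\mathcal R}(\mathbb M,\mathbb L^*)=\Hom_{\mathcal R}(\mathbb M\otimes\mathbb L,\mathcal R)=\Hom_{\mathcal R}(\mathbb L,\mathbb M^*)$, and left exactness of $\mathbb Hom_{\mathcal R}(-,\mathcal R)$.

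Set $P:=\mathbb M(R)$, let $\iota\colon\mathcal P\to\mathbb M$ be the canonical morphism, and put $p:=\iota^{*}\colon\mathbb M^{*}\to\mathcal P^{*}$, so that $\Ker p=(\Coker\iota)^{*}$. The core is the equivalence of $(1)$ with the assertion that $p$ is a monomorphism. One direction is easy: over $S$, an element of $\Ker p$ is, under $\mathbb M^*(S)=\Hom_{\mathcal R}(\mathbb M,\mathcal S)$, exactly a morphism $\mathbb M\to\mathcal S$ vanishing at $R$, and since $p_S$ is a factor of $\mathbb M^{*}(S)\to\Hom_S(\mathbb M(S),S)\to\Hom_R(\mathbb M(R),S)$ (the second arrow being restriction along $\mathbb M(R)\to\mathbb M(S)$), $p$ monomorphic forces the morphism in $(1)$ injective. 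Granting the converse for the moment, everything else is formal: $(1)\Rightarrow(2)$ because $\Hom_{\mathcal R}(\mathbb M,\mathcal N)=\Hom_{\mathcal R}(\mathcal N^{*},\mathbb M^{*})\xrightarrow{\,p_{*}\,}\Hom_{\mathcal R}(\mathcal N^{*},\mathcal P^{*})=\Hom_R(\mathbb M(R),N)$ is injective by left exactness; $(2)\Rightarrow(3)$, for $\mathbb N=\mathbb L^{*}$, because a morphism $\psi\colon\mathbb M\to\mathbb L^{*}$ vanishing at $R$ corresponds via the adjunction to a morphism $\mathbb L\to\mathbb M^{*}$ landing in $\Ker p=0$, hence $\psi=0$; $(3)\Rightarrow(2)$ because every quasi-coherent module is a dual functor; $(2)\Rightarrow(1)$ by taking $N=S$ and the factorization above; and $(2)\Leftrightarrow(4)$ because, for $\phi\colon\mathbb M\to\mathcal N$ and $C:=N/\phi_R(\mathbb M(R))$, the natural epimorphism $\mathcal C\to\Coker\phi$ is an isomorphism iff the induced morphism $\mathbb M\to\mathcal C$ (which vanishes at $R$) is zero, and $(2)$ is precisely the statement that every morphism from $\mathbb M$ to a quasi-coherent module vanishing at $R$ is zero.

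The main obstacle is thus the implication $(1)\Rightarrow\ ``p$ is a monomorphism'', i.e. $(\Coker\iota)^{*}=0$. The difficulty is that $(1)$ supplies only ``pointwise'' information — a morphism $\mathbb M_{S}\to\mathcal R_{S}$ is detected by its value at $S$ — while one must conclude that the quotient functor $\mathbb M/\iota(\mathcal P)$, which vanishes at $R$, carries no nonzero functional at any algebra whatsoever. I would argue by contradiction: a nonzero $\xi\in(\Coker\iota)^{*}(S)$ pulls back to a nonzero morphism $\widetilde\xi\colon\mathbb M_{S}\to\mathcal R_{S}$ killing $\iota(\mathcal P)$, nonzero already at some scalar extension $T$; passing to the value of $\widetilde\xi$ at $T$ and using the behaviour of $\mathbb Hom_{\mathcal R}(-,\mathcal R)$ under extension of scalars and with respect to the canonical morphism from the quasi-coherent module of $\mathbb M(T)$, one should produce a morphism contradicting $(1)$ over $T$. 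Here I expect to lean on the preliminary results of the paper relating $\mathbb M$, $\mathbb M^{*}$, the unit $\mathcal P\to\mathbb M$, and extension of scalars, and this is the step I would budget the most effort for.

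Finally, for $(5)$ and $(6)$: $(2)\Rightarrow(5)$ is immediate once $p$ is a monomorphism, since $\mathcal P^{*}$ embeds in $\prod^{P}\mathcal R$ by evaluating functionals at the elements of $P$, giving $\mathbb M^{*}\hookrightarrow\prod^{P}\mathcal R$; and $(5)\Rightarrow(1)$ uses reflexivity: $\Hom_{\mathcal R}(\mathbb M^{*},\mathcal R)=\mathbb M^{**}(R)=\mathbb M(R)$, so the coordinates of a monomorphism $\mathbb M^{*}\hookrightarrow\prod^{I}\mathcal R$ are evaluations at elements of $\mathbb M(R)$, whence the monomorphism factors through $p$ and forces $p$ — hence the morphism in $(1)$ — monomorphic. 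When $R$ is a field, quasi-coherent modules and module schemes are related by the duality $(-)^{*}$; from $(5)$, $\mathbb M^{*}\hookrightarrow\prod^{I}\mathcal R$, and I would show $\mathbb M^{*}=\ilim{i}\mathcal N_i^{*}$, the directed union of the module schemes $\mathcal N_i^{*}$ dual to the quasi-coherent quotients $\mathbb M\twoheadrightarrow\mathcal N_i$ (each $\mathcal N_i^{*}\hookrightarrow\mathbb M^{**}=\mathbb M^{*}$), which is $(6)$; conversely $(6)\Rightarrow(5)$ by embedding $\ilim{i}\mathcal N_i^{*}$ into $\prod_i\mathcal N_i^{*}$ and each module scheme $\mathcal N_i^{*}$ into a product of copies of $\mathcal R$.
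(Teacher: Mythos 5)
Most of your reductions are sound and close in spirit to the paper's: $(2)\Leftrightarrow(3)$ via the adjunction and left exactness of dualization, $(2)\Leftrightarrow(4)$ via the factorization through $\mathcal C$, $(2)\Rightarrow(1)$ and $(2)\Rightarrow(5)$ via the factorization of $\mathbb M^*(S)\to\Hom_R(\mathbb M(R),S)$ through $\Hom_S(\mathbb M(S),S)$ and the embedding $\mathcal P^*\hookrightarrow\prod^{P}\mathcal R$, and $(5)\Rightarrow(1)$ via reflexivity. But the implication you yourself identify as the crux --- $(1)\Rightarrow$ ``$p\colon\mathbb M^*\to\mathcal P^*$ is a monomorphism'', equivalently $(1)\Rightarrow(2)$ --- is left unproved, and the contradiction strategy you sketch for it cannot succeed. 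A nonzero $\xi\in\mathbb M^*(S)$ with $p_S(\xi)=0$ is a functional whose evaluation $\xi_S\colon\mathbb M(S)\to S$ vanishes on the image of $\mathbb M(R)\to\mathbb M(S)$ but may perfectly well be nonzero on $\mathbb M(S)$ itself; condition $(1)$ at any scalar extension $T$ only inspects the value of $\xi_{|T}$ on all of $\mathbb M(T)$, so no base change of $\xi$ yields a nonzero element of $\mathbb M^*(T)$ with zero evaluation at $T$. There is no contradiction to be extracted from the system $\{\xi_T\}$ alone; one must change the target, not the base.

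The missing idea (the paper's Theorem \ref{4.3B}) is the square-zero extension. Given $w\colon\mathbb M\to\mathcal N$ with $w_R=0$, set $S:=R\oplus N$ with $(r,n)\cdot(r',n')=(rr',rn'+r'n)$, so that $N$ is an ideal of $S$ and $N\hookrightarrow S$ is a split injection of $R$-modules; then $w$ determines an element $w'\in\Hom_{\mathcal R}(\mathbb M,\mathcal S)=\mathbb M^*(S)$. The key computation is that the evaluation of $w'$ at $S$ is not $w_S$ but $w_R\circ\mathbb M(\pi_1)$, where $\pi_1\colon S\to R$ is the retraction: this follows from naturality of $w$ applied to the algebra morphism $\pi_1$ together with the fact that the multiplication $S\otimes_RS\to S$ restricted to $N\otimes_RS$ is ${\rm id}_N\otimes\pi_1$. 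Hence $w_R=0$ forces the evaluation of $w'$ at $S$ to vanish, $(1)$ gives $w'=0$, and the monomorphism $\mathcal N\hookrightarrow\mathcal S$ gives $w=0$. This is the step you must supply; without it the whole chain collapses, since every other implication either passes through $(2)$ or assumes $p$ is a monomorphism. (A smaller issue: for $(6)\Rightarrow(1)$ a directed union $\ilim{i}\mathcal N_i^*$ does not embed naturally into $\prod_i\mathcal N_i^*$; the paper instead uses that module schemes over a field are dually separated and that direct limits and quotients of dually separated modules are dually separated, or the identity $\Hom_{\mathcal K}(\mathcal N^*,\ilim{i}\mathcal N_i^*)=\ilim{i}\Hom_{\mathcal K}(\mathcal N^*,\mathcal N_i^*)$, and for $(1)\Rightarrow(6)$ you would still need the fact that, over a field, the image of any morphism from a dually separated module to a quasi-coherent module is quasi-coherent, to see that the quotients form a directed family through which every functional factors.)
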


If $R$ is a field and $\mathbb M$ is a reflexive functor of $\mathcal R$-modules, we prove that $\mathbb M$ is  dually separated   and we obtain the following theorem.

\begin{theorem} Let $R=K$ be a field.  A functor of  $\mathcal K$-modules is reflexive iff
it is equal to the inverse limit of its quasi-coherent quotient $\mathcal R$-modules.
\end{theorem}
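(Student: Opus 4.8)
The plan is to prove the two implications separately, leaning on Theorem 1.3 (the characterization of dually separated functors) to handle the forward direction and on a direct construction for the converse.

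\textbf{Reflexive $\Rightarrow$ inverse limit of quasi-coherent quotients.} Suppose $\mathbb M$ is reflexive, so $\mathbb M = \mathbb M^{**}$. First I would invoke the observation announced just before the theorem: over a field $K$, a reflexive functor of $\mathcal K$-modules is automatically dually separated. (I expect this is proved right after Theorem 1.3, presumably because the double-dual map $\mathbb M^{*}(S)\to \mathbb M^{*}(S)^{**}$ is injective — being split injective or using that $\mathbb M^{*}$ is itself a dual functor — and then reflexivity transfers the property; I would cite that intermediate result rather than reprove it.) Granting dual separation, apply clause (6) of Theorem 1.3 to the reflexive functor $\mathbb M^{*}$... but here one must be slightly careful: I want to apply the machinery to $\mathbb M$ itself. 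Let $\{\mathcal N_i\}$ be the (directed) set of quasi-coherent quotient $\mathcal R$-modules of $\mathbb M^{*}$, i.e. surjections $\mathbb M^{*}\twoheadrightarrow \mathcal N_i$. Dualizing gives monomorphisms $\mathcal N_i^{*}\hookrightarrow \mathbb M^{**}=\mathbb M$, and clause (6) applied to $\mathbb M^{*}$ (using that $\mathbb M^{*}$ is dually separated, which follows from $\mathbb M$ being reflexive hence $\mathbb M^{*}$ reflexive hence dually separated over a field) yields $\mathbb M = \mathbb M^{**}=\ilim{i}\mathcal N_i^{*}$. Now the key point is that dualizing turns this direct limit back into an inverse limit: $\mathbb M^{*}=(\ilim{i}\mathcal N_i^{*})^{*}=\plim{i}\mathcal N_i^{**}=\plim{i}\mathcal N_i$, the last equality because each $\mathcal N_i$, being quasi-coherent, is reflexive (by \ref{reflex}). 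So $\mathbb M^{*}$ is the inverse limit of its quasi-coherent quotients; to get the statement about $\mathbb M$ one observes symmetrically (again $\mathbb M=\mathbb M^{**}$ and $\mathbb M^{*}$ is reflexive) that the same argument applied to $\mathbb M^{*}$ in place of $\mathbb M$ expresses $\mathbb M=(\mathbb M^{*})^{*}$ as the inverse limit of the duals of the quasi-coherent quotients of $\mathbb M^{*}$ — and one checks these duals are exactly the quasi-coherent quotient $\mathcal R$-modules of $\mathbb M$ by the anti-equivalence $\mathcal N\leftrightarrow\mathcal N^{*}$ on quasi-coherent modules.

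\textbf{Inverse limit of quasi-coherent quotients $\Rightarrow$ reflexive.} Conversely, suppose $\mathbb M=\plim{i}\mathcal N_i$ where $\{\mathcal N_i\}$ is the set of all quasi-coherent quotient $\mathcal R$-modules of $\mathbb M$, with the $\mathcal N_i$ forming an inverse system. Dualizing, $\mathbb M^{*}=(\plim{i}\mathcal N_i)^{*}$; the general fact that $\mathbb{H}om$ out of an inverse limit... — here I'd rather argue: each quotient $\mathbb M\twoheadrightarrow\mathcal N_i$ dualizes to a mono $\mathcal N_i^{*}\hookrightarrow\mathbb M^{*}$, and these form a direct system whose union I claim is all of $\mathbb M^{*}$; that is, $\mathbb M^{*}=\ilim{i}\mathcal N_i^{*}$. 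This should follow because any morphism $\mathbb M\to\mathcal R$ factors through a quasi-coherent quotient of $\mathbb M$ (its image inside $\mathcal R$ is quasi-coherent — here is where $R=K$ a field enters, as submodules of $K$-vector spaces stay quasi-coherent and the relevant cokernel/image computations are exact), and more generally, since $\mathbb M=\plim{i}\mathcal N_i$, a functional on $\mathbb M$ is a compatible family and factors through some finite stage. Then $\mathbb M^{**}=(\ilim{i}\mathcal N_i^{*})^{*}=\plim{i}\mathcal N_i^{**}=\plim{i}\mathcal N_i=\mathbb M$, using \ref{reflex} that each quasi-coherent $\mathcal N_i$ is reflexive. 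Hence $\mathbb M$ is reflexive.

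\textbf{Main obstacle.} The routine parts are the duality juggling $(\ilim{})^{*}=\plim{(-)^{*}}$ and the reflexivity of quasi-coherent modules. The genuinely delicate step is the claim that $\mathbb M^{*}=\ilim{i}\mathcal N_i^{*}$ when the $\mathcal N_i$ range over \emph{all} quasi-coherent quotients — equivalently, that every morphism $\mathbb M\to\mathbb N$ into a dual functor, or at least every morphism $\mathbb M\to\mathcal R$, factors through a quasi-coherent quotient of $\mathbb M$ and that these quotients are cofinally "small" enough to form a genuine direct system on the dual side. This is exactly the content that makes the field hypothesis essential (so that images and cokernels of the relevant morphisms are again quasi-coherent, cf. clause (4) of Theorem 1.3), and I expect the proof to route through part (6) of Theorem 1.3 together with the automatic dual separation of reflexive functors over a field. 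The remaining bookkeeping — that the duals of quasi-coherent quotients of $\mathbb M^{*}$ are precisely the quasi-coherent submodule schemes appearing in the limit, and vice versa — is a formal consequence of the contravariant equivalence $\mathcal N\mapsto\mathcal N^{*}$ between quasi-coherent modules and $\mathcal R$-module schemes.
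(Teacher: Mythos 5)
Your skeleton matches the paper's: both directions reduce to showing the module in question is dually separated, then invoke clause (6) of the characterization (Theorem \ref{P2}) to write $\mathbb M^*=\ilim{i}\mathcal N_i^*$ over the quasi-coherent quotients $\mathcal N_i$ of $\mathbb M$, and dualize using $\mathcal N_i=\mathcal N_i^{**}$ (Theorem \ref{reflex}). But at both load-bearing points you substitute a citation or a guess for the actual argument, and one of the guesses is false. In the forward direction the entire content is the claim that a reflexive $\mathcal K$-module is dually separated, and the paper establishes this claim \emph{inside} the proof of this very theorem, so it is not available as an external ``intermediate result.'' The mechanism is not the one you conjecture (injectivity of $\mathbb M^*(S)\to\mathbb M^*(S)^{**}$): it is that $\mathbb M^*$, being a dual functor, is (linearly) separated, and that over a field Proposition \ref{2125} upgrades separatedness to the statement that $\mathbb M^*\to{\mathbb M^*}_{sch}=(({\mathbb M^{**}})_{qc})^*={\mathbb M_{qc}}^*$ is a monomorphism --- which is exactly Definition \ref{dualq} for $\mathbb M$. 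The field hypothesis enters precisely there, through the choice of a $K$-basis in the proof of \ref{2125}. Your detour through the quasi-coherent quotients of $\mathbb M^*$ is also unnecessary and slightly garbled: the duals $\mathcal N_i^*$ of those quotients are module-scheme \emph{submodules} of $\mathbb M$, not quasi-coherent quotients of $\mathbb M$, and no anti-equivalence converts one into the other; applying Theorem \ref{P2} directly to $\mathbb M$ avoids the issue.

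In the converse you correctly isolate the crux, namely $\mathbb M^*=\ilim{i}\mathcal N_i^*$, but the justification offered --- that a functional on $\mathbb M=\plim{i}\mathcal N_i$ is a compatible family and hence ``factors through some finite stage'' --- is false for general inverse limits (compare $(\prod_I\mathcal R)^*=\oplus_I\mathcal R$, Lemma \ref{refpro}, which requires a genuine argument), and you cannot appeal to ``automatic dual separation of reflexive functors'' since reflexivity of $\mathbb M$ is what is being proved. The paper's route is: over a field each $N_i$ is free, so $\mathcal N_i^*$ is dually separated (Example \ref{3.3.3}); hence $\ilim{i}\mathcal N_i^*$ is dually separated (Proposition \ref{3.3.4}); hence its dual $\mathbb M=(\ilim{i}\mathcal N_i^*)^*$ is dually separated (Corollary \ref{QHom2}). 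Only then does Theorem \ref{P2} yield $\mathbb M^*=\ilim{i}\mathcal N_i^*$, using that $\{\mathcal N_i\}$ is by hypothesis the set of \emph{all} quasi-coherent quotients of $\mathbb M$, after which $\mathbb M^{**}=\plim{i}\mathcal N_i=\mathbb M$. You should supply these two arguments; everything else in your proposal is correct bookkeeping.
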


If $I$ is a totally ordered set and $\{f_{ij}\colon  M_i\to M_j\}_{i\geq j\in I}$ is an inverse system of  $K$-vector spaces, we prove that $\plim{i\in I} \mathcal M_i$ is a reflexive functor of $\mathcal K$-modules.
Unfortunately, we do not know if arbitrary inverse limits of quasi-coherent  $\mathcal K$-modules are reflexive.

\section{Preliminaries}

Let $R$ be a commutative ring (associative with a unit). All the  functors considered in this paper are covariant functors from the category of commutative $R$-algebras (always assumed to be associative with a unit) to the category of sets. A functor $\mathbb X$ is said to be a functor of sets (resp. groups, rings,  etc.) if $\mathbb X$ is a functor from the category of  commutative $R$-algebras to the category of sets (resp. groups, rings, etc.).

\begin{notation} \label{nota2.1}For simplicity, given a (covariant) functor $\mathbb X$ (from the category of commutative $R$-algebras to the category of sets),
we shall sometimes use $x \in \mathbb X$  to denote $x \in \mathbb X(S)$. Given $x \in \mathbb X(S)$ and a morphism of commutative $R$-algebras $S \to S'$, we shall still denote by $x$ its image by the morphism $\mathbb X(S) \to \mathbb X(S')$.\end{notation}

 Let $\mathbb M$ and $\mathbb M'$ be two ${\mathcal R}$-modules.
 A morphism of ${\mathcal R}$-modules $f\colon \mathbb M\to \mathbb M'$
 is a morphism of functors  such that the  morphism $f_{S}\colon \mathbb M({S})\to
 \mathbb M'({S})$ defined by $f$ is a morphism of ${S}$-modules, for any commutative $R$-algebra ${S}$.
 We shall denote by $\Hom_{{\mathcal R}}(\mathbb M,\mathbb M')$ the  family of all the morphisms of ${\mathcal R}$-modules from $\mathbb M$ to $\mathbb M'$.

\begin{remark} Direct limits, inverse limits of ${\mathcal R}$-modules and  kernels, cokernels, images, etc.,  of morphisms of ${\mathcal R}$-modules are regarded in the category of ${\mathcal R}$-modules.\end{remark}

One has
$$\aligned & 
(\Ker f)({S})=\Ker f_{S},\, (\Coker f)({S})=\Coker f_{S},\, (\Ima f)({S})=\Ima f_{S},\\
&  (\ilim{i\in I} \mathbb M_i)({S})=\ilim{i\in I} (\mathbb M_i({S})),\,
(\plim{j\in J} \mathbb M_j)({S})=\plim{j\in J} (\mathbb M_j({S})),\endaligned $$
(where $I$ is an upward directed set and $J$ a downward directed set).
$\mathbb M\otimes_{{\mathcal R}}\mathbb M'$ is defined by $(\mathbb M\otimes_{{\mathcal R}}\mathbb M')({S}):=\mathbb M({S})\otimes_{{S}}\mathbb M'({S})$, for any commutative $R$-algebra ${S}$.

\begin{definition}  Given an ${\mathcal R}$-module $\mathbb M$ and a commutative $R$-algebra ${S}$,   we shall denote by $\mathbb M_{|{S}}$  the restriction of $\mathbb M$ to the category of commutative ${S}$-algebras, i.e.,  $$\mathbb M_{\mid {S}}(S'):=\mathbb M(S'),$$ for any commutative ${S}$-algebra $S'$.
\end{definition}

We shall denote by ${\mathbb Hom}_{{\mathcal R}}(\mathbb M,\mathbb M')$\footnote{In this paper, we shall only  consider well-defined functors ${\mathbb Hom}_{{\mathcal R}}(\mathbb M,\mathbb M')$, that is to say, functors such that $\Hom_{\mathcal {S}}(\mathbb M_{|{S}},\mathbb {M'}_{|{S}})$ is a set, for any ${S}$.} the  ${\mathcal R}$-module defined by $${\mathbb Hom}_{{\mathcal R}}(\mathbb M,\mathbb M')({S}):={\rm Hom}_{\mathcal {S}}(\mathbb M_{|{S}}, \mathbb M'_{|{S}}).$$  Obviously,
$$(\mathbb Hom_{{\mathcal R}}(\mathbb M,\mathbb M'))_{|{S}}=
\mathbb Hom_{\mathcal {S}}(\mathbb M_{|{S}},\mathbb M'_{|{S}}).$$

\begin{notation} \label{nota2.2} Let $\mathbb M$ be an ${\mathcal R}$-module. We shall denote $\mathbb M^*=\mathbb Hom_{{\mathcal R}}(\mathbb M,\mathcal R)$.\end{notation}

\begin{proposition} \label{trivial} Let $\mathbb M$ and $\mathbb N$ be two  ${\mathcal R}$-modules. Then,
$$\Hom_{{\mathcal R}}(\mathbb M,\mathbb N^*)=\Hom_{{\mathcal R}}(\mathbb N,\mathbb M^*),\, f\mapsto \tilde f,$$
where $\tilde f$ is defined as follows: $\tilde f(n)(m):=f(m)(n)$, for any $m\in\mathbb M$ and $n\in\mathbb N$.

\end{proposition}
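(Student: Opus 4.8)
The plan is to treat this as the functorial shadow of the symmetry of bilinear pairings and to prove it by unwinding the definitions. Recall that a morphism $f\in\Hom_{\mathcal R}(\mathbb M,\mathbb N^*)$ amounts to the following data: for every commutative $R$-algebra $S$ an $S$-linear map $f_S\colon\mathbb M(S)\to\mathbb N^*(S)$, natural in $S$; and, for $m\in\mathbb M(S)$, the element $f_S(m)\in\mathbb N^*(S)=\Hom_{\mathcal S}(\mathbb N_{|S},\mathcal S)$ is itself a morphism of $\mathcal S$-modules, i.e.\ a family of $S'$-linear maps $f_S(m)_{S'}\colon\mathbb N(S')\to S'$ ($S'$ running over $S$-algebras) natural in $S'$. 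Following Notation \ref{nota2.1}, I abbreviate $f(m)(n):=f_{S'}(m)_{S'}(n)$ when $m$ and $n$ are taken over a common algebra $S'$.

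First I would define the candidate $\tilde f$: for $S$ a commutative $R$-algebra, $n\in\mathbb N(S)$, $S'$ an $S$-algebra and $m'\in\mathbb M(S')$, put $\tilde f_S(n)_{S'}(m'):=f(m')(n)$, where $n$ is read as its image in $\mathbb N(S')$. Then the work is to verify three routine points. (a) For fixed $S$ and $n\in\mathbb N(S)$, the rule $m'\mapsto f(m')(n)$ is $S'$-linear (because $f_{S'}$ is $S'$-linear) and natural in $S'$, hence defines an element $\tilde f_S(n)\in\mathbb M^*(S)=\Hom_{\mathcal S}(\mathbb M_{|S},\mathcal S)$; naturality in $S'$ is obtained by combining the naturality of $f$ in its algebra argument (which identifies, for $S'\to S''$ and $m'\mapsto m''$, the element $f_{S''}(m'')$ with the restriction of $f_{S'}(m')$) with the naturality in $S''$ of the morphism $f_{S'}(m')\in\mathbb N^*(S')$, evaluated at $n$. (b) $\tilde f_S\colon\mathbb N(S)\to\mathbb M^*(S)$ is $S$-linear, since each $f_{S'}(m')$ is $\mathcal S'$-linear. (c) $\tilde f=(\tilde f_S)_S$ is natural in $S$, which one checks by evaluating both sides on an $m'$ defined over an algebra over the larger ring and noting that the image of $n$ is computed the same way along either route. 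Thus $\tilde f\in\Hom_{\mathcal R}(\mathbb N,\mathbb M^*)$ and $\tilde f(n)(m)=f(m)(n)$ by construction.

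Finally, the recipe is visibly symmetric in $\mathbb M$ and $\mathbb N$: applying it to $\tilde f$ yields a morphism $\widetilde{\tilde f}\in\Hom_{\mathcal R}(\mathbb M,\mathbb N^*)$ with $\widetilde{\tilde f}(m)(n)=\tilde f(n)(m)=f(m)(n)$, so $\widetilde{\tilde f}=f$; hence $f\mapsto\tilde f$ is a bijection, in fact an isomorphism natural in both variables. (Alternatively, granting the tensor--hom adjunction $$\Hom_{\mathcal R}(\mathbb M,\mathbb Hom_{\mathcal R}(\mathbb N,\mathcal R))=\Hom_{\mathcal R}(\mathbb M\otimes_{\mathcal R}\mathbb N,\mathcal R)$$ together with the commutativity $\mathbb M\otimes_{\mathcal R}\mathbb N\cong\mathbb N\otimes_{\mathcal R}\mathbb M$, the identity is immediate.) I do not expect any genuine obstacle here, consistent with the name of the proposition; the only step needing care is the naturality verification in (a), which must invoke two separate naturality conditions at once, and, more generally, the bookkeeping of which base ring each element lives over, together with the observation that the extra $S'$-direction data encoded in $\mathbb N^*(S)$ and $\mathbb M^*(S)$ is already forced by naturality in $S$, so nothing is lost in passing between the two descriptions.
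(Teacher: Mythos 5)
Your proof is correct, but it takes a more pedestrian route than the paper. The paper's entire proof is the one-line chain
$$\Hom_{{\mathcal R}}(\mathbb M,\mathbb N^*)=\Hom_{{\mathcal R}}(\mathbb M\otimes_{{\mathcal R}}\mathbb N,\mathcal R)=\Hom_{{\mathcal R}}(\mathbb N,\mathbb M^*),$$
i.e.\ exactly the tensor--hom adjunction plus commutativity of $\otimes_{\mathcal R}$ that you relegate to your final parenthetical remark; the symmetry of the middle term makes the involutive character of $f\mapsto\tilde f$ automatic. What you do instead is prove the needed special case of that adjunction by hand: you unwind $\mathbb N^*(S)=\Hom_{\mathcal S}(\mathbb N_{|S},\mathcal S)$ into its families of $S'$-components, define $\tilde f_S(n)_{S'}(m'):=f(m')(n)$, and check linearity and the two interlocking naturality conditions (in the algebra over which $m'$ lives and in the algebra over which the value is taken), then verify $\widetilde{\tilde f}=f$ directly. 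Your version buys self-containedness --- it does not presuppose that $\mathbb Hom_{\mathcal R}(-,\mathcal R)$ is right adjoint to $\mathbb M\otimes_{\mathcal R}(-)$ in this functor category, a fact the paper imports without proof --- and it makes explicit the bookkeeping of base rings that the slick argument hides; the paper's version buys brevity and makes the bijectivity of the correspondence transparent. Your identification of the delicate point, namely that naturality of $\tilde f_S(n)$ in $S'$ requires combining naturality of $f$ with naturality of each $f_{S'}(m')$, is exactly right and is the only place where the hands-on argument could silently go wrong; you handle it correctly.
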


\begin{proof} $\Hom_{{\mathcal R}}(\mathbb M,\mathbb N^*)=\Hom_{{\mathcal R}}(\mathbb M\otimes_{{\mathcal R}}\mathbb N,\mathcal R)=\Hom_{{\mathcal R}}(\mathbb N,\mathbb M^*)$.

\end{proof}

\begin{proposition} \cite[1.15]{Amel}  \label{adj2} Let $\mathbb M$ be an $\mathcal R$-module, $S$ a commutative $R$-algebra and $N$ an $S$-module.  Then,
$$\Hom_{\mathcal R}(\mathbb M,\mathcal N)=\Hom_{\mathcal S}(\mathbb M_{|S},\mathcal N), w\mapsto \pi\circ w_{|S},$$ where $\pi\colon \mathcal N_{|S}\to \mathcal N$ is defined by $\pi_T(n\otimes_R t):=n\otimes_S t\in N\otimes_ST$, for any commutative $S$-algebra $T$ and any $n\otimes_Rt\in N\otimes_RT=\mathcal N_{|S}(T)$.
In particular,
$$\Hom_{\mathcal R}(\mathbb M,\mathcal S)=\mathbb M^*(S).$$
\end{proposition}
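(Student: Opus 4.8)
The plan is to exhibit an explicit two-sided inverse to the assignment $w\mapsto\pi\circ w_{\mid S}$, using the adjunction between base change and restriction of scalars at the level of algebras. Recall that for a commutative $R$-algebra $T$ and a commutative $S$-algebra $T'$ one has $\Hom_{S\text{-alg}}(S\otimes_RT,T')=\Hom_{R\text{-alg}}(T,T')$, with unit $\iota_T\colon T\to S\otimes_RT$, $t\mapsto 1\otimes t$, and counit the multiplication map $\mu_{T'}\colon S\otimes_RT'\to T'$, $s\otimes t'\mapsto st'$; here $\iota$ and $\mu$ are morphisms of functors and $\mu_{T'}\circ\iota_{T'}=\mathrm{id}_{T'}$. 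The one identification on which everything rests is $N\otimes_S(S\otimes_RT)=N\otimes_RT$, natural in $T$; under it the map $\pi_{T'}\colon N\otimes_RT'\to N\otimes_ST'$ becomes $\mathrm{id}_N\otimes_S\mu_{T'}$, which in particular makes transparent that $\pi$ is a morphism of $\mathcal S$-modules, so that the assignment $w\mapsto\pi\circ w_{\mid S}$ makes sense.

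I would then define the candidate inverse $\Phi\colon\Hom_{\mathcal S}(\mathbb M_{\mid S},\mathcal N)\to\Hom_{\mathcal R}(\mathbb M,\mathcal N)$ by sending $g$ to the morphism $w=\Phi(g)$ whose component at a commutative $R$-algebra $T$ is the composite
$$w_T\colon\ \mathbb M(T)\xrightarrow{\ \mathbb M(\iota_T)\ }\mathbb M(S\otimes_RT)\xrightarrow{\ g_{S\otimes_RT}\ }N\otimes_S(S\otimes_RT)=N\otimes_RT.$$
Here one first checks that $\Phi$ is well defined, i.e.\ that $w$ is a morphism of $\mathcal R$-modules: $\mathbb M(\iota_T)$ is $T$-linear, $g_{S\otimes_RT}$ is $(S\otimes_RT)$-linear and hence $T$-linear, and the last identification is $T$-linear, so $w_T$ is $T$-linear; and $w$ is natural in $T$ because $g$ and $\iota$ are morphisms of functors.

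Next one verifies that $\Phi$ is a two-sided inverse of $w\mapsto\pi\circ w_{\mid S}$. Evaluating $\pi\circ\Phi(g)_{\mid S}$ at a commutative $S$-algebra $T'$ yields $\pi_{T'}\circ g_{S\otimes_RT'}\circ\mathbb M(\iota_{T'})$; rewriting $\pi_{T'}$ as $\mathrm{id}_N\otimes_S\mu_{T'}$ and invoking naturality of $g$ with respect to $\mu_{T'}$ turns this into $g_{T'}\circ\mathbb M(\mu_{T'})\circ\mathbb M(\iota_{T'})=g_{T'}\circ\mathbb M(\mu_{T'}\circ\iota_{T'})=g_{T'}$. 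Conversely, evaluating $\Phi(\pi\circ w_{\mid S})$ at a commutative $R$-algebra $T$ yields $\pi_{S\otimes_RT}\circ w_{S\otimes_RT}\circ\mathbb M(\iota_T)$; naturality of $w$ with respect to $\iota_T$ rewrites $w_{S\otimes_RT}\circ\mathbb M(\iota_T)$ as $w_T$ followed by the map $N\otimes_RT\to N\otimes_R(S\otimes_RT)$, $n\otimes t\mapsto n\otimes(1\otimes t)$, and then $\pi_{S\otimes_RT}$ composed with the identification $N\otimes_S(S\otimes_RT)=N\otimes_RT$ sends $n\otimes(1\otimes t)$ back to $n\otimes t$, so one recovers $w_T$. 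This gives the stated bijection (which is moreover $S$-linear, since $N$ is an $S$-module). Finally, the ``in particular'' is the case $N=S$: then the $\mathcal S$-module $\mathcal N$ is $\mathcal S=\mathcal R_{\mid S}$, the morphism $\pi$ is the identity, and $\Hom_{\mathcal R}(\mathbb M,\mathcal S)=\Hom_{\mathcal S}(\mathbb M_{\mid S},\mathcal R_{\mid S})=\mathbb M^*(S)$ by the definition of $\mathbb M^*$.

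I expect the only real obstacle to be bookkeeping: keeping track of which ring each of the maps involved is linear over, and applying the identification $N\otimes_S(S\otimes_RT)=N\otimes_RT$ consistently throughout. There is no conceptual difficulty. The content is precisely that the restriction functor $(-)_{\mid S}$ from $\mathcal R$-modules to $\mathcal S$-modules has a right adjoint, sending an $\mathcal S$-module $\mathbb N$ to the $\mathcal R$-module $T\mapsto\mathbb N(S\otimes_RT)$; that this right adjoint carries the quasi-coherent $\mathcal S$-module associated with $N$ to the quasi-coherent $\mathcal R$-module associated with $N$; and that the displayed map is the ensuing adjunction isomorphism.
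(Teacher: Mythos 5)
Your argument is correct: you exhibit the unit/counit of the base-change--restriction adjunction on algebras, define the inverse $\Phi(g)_T=g_{S\otimes_RT}\circ\mathbb M(\iota_T)$ under the identification $N\otimes_S(S\otimes_RT)=N\otimes_RT$, and verify both composites; this is the standard proof of the adjunction. Note that the paper itself gives no proof of this proposition --- it is imported verbatim from \cite[1.15]{Amel} --- so there is nothing in the text to compare against, but your route is exactly the one expected. One small inaccuracy in your last step: when $N=S$, the morphism $\pi\colon\mathcal N_{|S}\to\mathcal N$ is the multiplication $S\otimes_RT'\to T'$, not the identity (its source is $\mathcal S_{|S}(T')=S\otimes_RT'$, its target is $S\otimes_ST'=T'$); this does not affect the conclusion, since the ``in particular'' is just the general statement specialized to $N=S$ together with the definition $\mathbb M^*(S)=\Hom_{\mathcal S}(\mathbb M_{|S},\mathcal R_{|S})$.
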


\subsection{Quasi-coherent modules}

\begin{definition} Let $M$ (resp. $N$,  $V$, etc.) be an $R$-module. We shall denote by  ${\mathcal M}$  (resp. $\mathcal N$, $\mathcal V$, etc.)   the ${\mathcal R}$-module defined by ${\mathcal M}({S}) := M \otimes_R {S}$ (resp. $\mathcal N({S}):=N\otimes_R {S}$, etc.). $\mathcal M$  will be called the quasi-coherent ${\mathcal R}$-module associated with $M$. 
\end{definition}

 ${\mathcal M}_{\mid {S}}$ is the quasi-coherent $\mathcal {S}$-module associated with $M \otimes_R
{S}$.  For any pair of $R$-modules $M$ and $N$, the quasi-coherent module associated with $M\otimes_R N$ is $\mathcal M\otimes_{{\mathcal R}}\mathcal N$.

\begin{proposition} \cite[1.12]{Amel}  The functors
$$\aligned \text{Category of $R$-modules } & \to \text{ Category of quasi-coherent $\mathcal R$-modules }\\ M & \mapsto \mathcal M\\ \mathcal M(R) & \leftmapsto \mathcal M\endaligned$$
stablish an equivalence  of categories. In particular,
$${\rm Hom}_{{\mathcal R}} ({\mathcal M},{\mathcal M'}) = {\rm Hom}_R (M,M').$$
\end{proposition}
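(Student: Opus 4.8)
The plan is to establish the displayed isomorphism $\Hom_{\mathcal R}(\mathcal M,\mathcal M')=\Hom_R(M,M')$ first, and then read off the equivalence of categories. In one direction, a morphism of $\mathcal R$-modules $\phi\colon\mathcal M\to\mathcal M'$ has a component $\phi_R\colon\mathcal M(R)\to\mathcal M'(R)$, and under the canonical identifications $\mathcal M(R)=M\otimes_RR=M$ and $\mathcal M'(R)=M'$ this is an $R$-linear map $M\to M'$; this defines the assignment $\phi\mapsto\phi_R$. In the other direction, to $f\colon M\to M'$ one attaches the morphism $\mathcal f$ with $\mathcal f_S:=f\otimes\mathrm{id}_S\colon M\otimes_RS\to M'\otimes_RS$; each $\mathcal f_S$ is $S$-linear and the family is natural in $S$, so $\mathcal f$ is a morphism of $\mathcal R$-modules. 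Functoriality of both assignments $M\mapsto\mathcal M$ (tensoring is a functor) and $\mathcal M\mapsto\mathcal M(R)$ (evaluation at $R$) is immediate from the definitions.

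The point that needs an argument is that these two maps are mutually inverse. The composite $f\mapsto\mathcal f\mapsto(\mathcal f)_R=f\otimes\mathrm{id}_R=f$ is trivial under the identification $M\otimes_RR=M$. For the other composite, start with $\phi\colon\mathcal M\to\mathcal M'$, put $f:=\phi_R$, and show $\mathcal f=\phi$, i.e. $\phi_S=f\otimes\mathrm{id}_S$ for every commutative $R$-algebra $S$. Applying naturality of $\phi$ to the structural morphism $R\to S$ yields a commutative square whose vertical maps are the base-change morphisms $M=M\otimes_RR\to M\otimes_RS$ and $M'\to M'\otimes_RS$; reading it off gives $\phi_S(m\otimes1)=f(m)\otimes1=(f\otimes\mathrm{id}_S)(m\otimes1)$ for all $m\in M$. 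Since $\phi_S$ and $f\otimes\mathrm{id}_S$ are both $S$-linear and $M\otimes_RS$ is generated as an $S$-module by the elements $m\otimes1$, the two maps coincide on all of $M\otimes_RS$. This step — combining naturality with $S$-linearity and the fact that $\{m\otimes1\}$ generates $M\otimes_RS$ over $S$ — is the heart of the proof, though it is short; everything else is bookkeeping.

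Finally, the equivalence of categories: the bijection just proved says the functor $M\mapsto\mathcal M$ is fully faithful. It is essentially surjective because, by definition, every quasi-coherent $\mathcal R$-module is $\mathcal N$ for some $R$-module $N$, and then $\mathcal N(R)=N\otimes_RR\cong N$, so the quasi-coherent module associated with $\mathcal N(R)$ is naturally isomorphic to $\mathcal N$; likewise $\mathcal M(R)=M\otimes_RR\cong M$ naturally in $M$. Hence $M\mapsto\mathcal M$ and $\mathcal M\mapsto\mathcal M(R)$ are mutually quasi-inverse, which proves the proposition. (Alternatively, once both composites are known to be naturally isomorphic to the identity, the $\Hom$ identity follows formally; I prefer to prove the $\Hom$ statement directly, since it isolates the single nontrivial verification.)
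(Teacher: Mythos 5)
Your proof is correct: the key step — using naturality along $R\to S$ to pin down $\phi_S$ on the elements $m\otimes 1$, and then $S$-linearity of $\phi_S$ (which is part of the definition of a morphism of $\mathcal R$-modules) plus the fact that these elements generate $M\otimes_R S$ over $S$ — is exactly what makes the statement true, and essential surjectivity is indeed immediate from the definition of quasi-coherent module. The paper itself gives no proof, citing \cite[1.12]{Amel}; your argument is the standard one found there, so there is nothing to flag.
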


Let $f\colon M\to N$ be a morphism of $R$-modules and $\tilde f\colon \mathcal M\to \mathcal N$
the associated morphism of $\mathcal R$-modules. Let $C=\Coker f$, then $\Coker\tilde f=\mathcal C$, which is a quasi-coherent module.

\begin{proposition} \cite[1.3]{Amel}\label{tercer}
For every  ${{\mathcal R}}$-module $\mathbb M$ and every $R$-module $M$, it is satisfied that
$${\rm Hom}_{{\mathcal R}} ({\mathcal M}, \mathbb M) = {\rm Hom}_R (M, \mathbb M(R)),\, f\mapsto f_R.$$
\end{proposition}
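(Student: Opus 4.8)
The plan is to exhibit an explicit two-sided inverse to the assignment $f\mapsto f_R$, thereby realizing $\mathcal M$ as the value at $M$ of the left adjoint to the evaluation functor $\mathbb M\mapsto \mathbb M(R)$. First note that $f\mapsto f_R$ is well defined: if $f\colon \mathcal M\to \mathbb M$ is a morphism of $\mathcal R$-modules, then $f_R\colon \mathcal M(R)=M\otimes_R R=M\to \mathbb M(R)$ is $R$-linear and hence an element of $\Hom_R(M,\mathbb M(R))$. All the content is in producing the inverse and verifying that both composites are the identity.

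Given an $R$-linear map $g\colon M\to \mathbb M(R)$, I would build $\tilde g\colon \mathcal M\to \mathbb M$ as follows. For a commutative $R$-algebra $S$ let $\lambda_S\colon R\to S$ be its structural morphism; composing $g$ with $\mathbb M(\lambda_S)\colon \mathbb M(R)\to \mathbb M(S)$ yields a map $M\to \mathbb M(S)$ that is $R$-linear when $\mathbb M(S)$ is viewed as an $R$-module through $\lambda_S$. By the universal property of extension of scalars this factors uniquely through an $S$-linear map
$$\tilde g_S\colon M\otimes_R S=\mathcal M(S)\to \mathbb M(S),\qquad m\otimes s\mapsto s\cdot \mathbb M(\lambda_S)(g(m)).$$
Functoriality of $\mathbb M$ guarantees that the family $\{\tilde g_S\}_S$ is natural in $S$: for a morphism $\mu\colon S\to S'$ of $R$-algebras one compares the two routes around the naturality square on the generators $m\otimes s$, which reduces to the identity $\mathbb M(\lambda_{S'})=\mathbb M(\mu)\circ \mathbb M(\lambda_S)$. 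Thus $\tilde g$ is a genuine morphism of $\mathcal R$-modules, and $(\tilde g)_R=g$ is immediate since $\lambda_R=\mathrm{id}$.

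It then remains to check that $\widetilde{f_R}=f$ for every morphism $f\colon \mathcal M\to \mathbb M$, and this is the one step carrying the actual information. Here I would invoke naturality of $f$ along the structural morphism $\lambda_S\colon R\to S$: the resulting commutative square forces $f_S(m\otimes 1)=\mathbb M(\lambda_S)(f_R(m))$ for all $m\in M$. Combining this with the $S$-linearity of $f_S$ gives $f_S(m\otimes s)=s\cdot f_S(m\otimes 1)=s\cdot \mathbb M(\lambda_S)(f_R(m))=(\widetilde{f_R})_S(m\otimes s)$, so $f_S=(\widetilde{f_R})_S$ for every $S$ and hence $f=\widetilde{f_R}$. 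There is no serious obstacle: the argument is formal, and the only point demanding care is precisely this last step, namely that an $S$-linear map out of $M\otimes_R S$ is determined by its values on the generators $m\otimes 1$, which are in turn pinned down by $f_R$ through naturality on the structural morphisms. Everything else—well-definedness via the universal property and naturality via functoriality of $\mathbb M$—is routine bookkeeping.
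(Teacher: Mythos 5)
Your proposal is correct and complete: the assignment $g\mapsto\tilde g$ with $\tilde g_S(m\otimes s)=s\cdot\mathbb M(\lambda_S)(g(m))$, together with the verification that $f_S$ is pinned down on the generators $m\otimes 1$ by naturality along the structural morphisms $\lambda_S\colon R\to S$ and by $S$-linearity, is exactly the standard adjunction argument. The paper itself gives no proof of this proposition—it cites it as \cite[1.3]{Amel}—so your write-up simply supplies, in essentially the same form as the cited source, the proof that the paper leaves to the reference.
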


\begin{notation} Let $\mathbb M$ be an ${\mathcal R}$-module. 
We shall denote by $\mathbb M_{qc}$ the quasi-coherent module associated with
the $R$-module $\mathbb M(R)$, that is, $$\mathbb M_{qc}({S}):=\mathbb M(R)\otimes_R{S}.$$\end{notation}

\begin{proposition} \label{tercerb} For each  ${\mathcal R}$-module $\mathbb M$ one has the natural morphism $$\mathbb M_{qc}\to \mathbb M, \,m\otimes s\mapsto s\cdot m,$$ for any $m\otimes s\in \mathbb M_{qc}({S})=\mathbb M(R)\otimes_R {S}$, and a functorial equality 
$$\Hom_{{\mathcal R}}(\mathcal N,\mathbb M_{qc}))=\Hom_{{\mathcal R}}(\mathcal N,\mathbb M,$$
for any quasi-coherent ${\mathcal R}$-module $\mathcal N$.
\end{proposition}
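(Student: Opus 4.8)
The plan is to check first that $m\otimes s\mapsto s\cdot m$ really defines a morphism of $\mathcal R$-modules $\phi\colon\mathbb M_{qc}\to\mathbb M$, and then to obtain the asserted equality of Hom-sets by reducing everything to Proposition \ref{tercer}. For the first part, fix a commutative $R$-algebra $S$; the structure morphism $R\to S$ gives an $R$-linear map $\mathbb M(R)\to\mathbb M(S)$, $m\mapsto\bar m$, and I would observe that $(m,s)\mapsto s\cdot\bar m$ is $R$-bilinear on $\mathbb M(R)\times S$ (the point being that the $S$-module structure of $\mathbb M(S)$ restricts along $R\to S$ to its $R$-module structure). Hence it factors through an $S$-linear map $\phi_S\colon\mathbb M_{qc}(S)=\mathbb M(R)\otimes_R S\to\mathbb M(S)$. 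Functoriality in $S$ is then routine: for a morphism $S\to S'$ one uses that $\mathbb M(R)\to\mathbb M(S)\to\mathbb M(S')$ equals $\mathbb M(R)\to\mathbb M(S')$ and that $\mathbb M(S)\to\mathbb M(S')$ is semilinear over $S\to S'$. So $\phi=(\phi_S)_S$ is a morphism of $\mathcal R$-modules.

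For the equality, write $N=\mathcal N(R)$. Since $\mathbb M_{qc}$ is quasi-coherent (it is the quasi-coherent module associated with $\mathbb M(R)$), the equivalence of categories between $R$-modules and quasi-coherent $\mathcal R$-modules, equivalently Proposition \ref{tercer}, gives $\Hom_{\mathcal R}(\mathcal N,\mathbb M_{qc})=\Hom_R(N,\mathbb M(R)\otimes_R R)=\Hom_R(N,\mathbb M(R))$, while Proposition \ref{tercer} applied to $\mathbb M$ itself gives $\Hom_{\mathcal R}(\mathcal N,\mathbb M)=\Hom_R(N,\mathbb M(R))$, $f\mapsto f_R$. So both Hom-sets are canonically identified with $\Hom_R(N,\mathbb M(R))$, and what remains is to verify that the induced bijection $\Hom_{\mathcal R}(\mathcal N,\mathbb M_{qc})\to\Hom_{\mathcal R}(\mathcal N,\mathbb M)$ is precisely composition with $\phi$.

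That last verification is the only step that is not pure formality, and it is where I expect whatever difficulty there is to lie, although it is mild. By Proposition \ref{tercer} a morphism $g\colon\mathcal N\to\mathbb M_{qc}$ is determined by $g_R\colon N\to\mathbb M(R)\otimes_R R$, and $\phi\circ g$ is determined by $(\phi\circ g)_R=\phi_R\circ g_R$; since $\phi_R$ sends $m\otimes 1$ to $1\cdot m=m$, i.e.\ $\phi_R$ is the canonical isomorphism $\mathbb M(R)\otimes_R R\cong\mathbb M(R)$, the morphisms $g$ and $\phi\circ g$ have the same associated $R$-linear map $N\to\mathbb M(R)$. Hence $g\mapsto\phi\circ g$ realizes the bijection above, giving the functorial equality $\Hom_{\mathcal R}(\mathcal N,\mathbb M_{qc})=\Hom_{\mathcal R}(\mathcal N,\mathbb M)$, as wanted.
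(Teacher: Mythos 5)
Your proof is correct and follows essentially the same route as the paper: both Hom-sets are identified with $\Hom_R(N,\mathbb M(R))$ by two applications of Proposition \ref{tercer}. The extra verifications you include (that $m\otimes s\mapsto s\cdot m$ is a well-defined morphism of $\mathcal R$-modules, and that the resulting bijection is realized by composition with it) are details the paper leaves implicit, but they do not change the argument.
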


\begin{proof} Observe that $\Hom_{{\mathcal R}}(\mathcal N,\mathbb M)\overset{\text{\ref{tercer}}}=\Hom_R(N,\mathbb M(R))\overset{\text{\ref{tercer}}}=\Hom_{{\mathcal R}}(\mathcal N,\mathbb M_{qc})$.

\end{proof}

Obviously, an $\mathcal R$-module $\mathbb M$ is a quasi-coherent module iff
the natural morphism $\mathbb M_{qc}\to \mathbb M$ is an isomorphism.

\begin{theorem}  \cite[1.8]{Amel}\label{prop4} 
Let $M$ and $M'$ be $R$-modules. Then, $${\mathcal M} \otimes_{{\mathcal R}} {\mathcal M'}={\mathbb Hom}_{{\mathcal R}} ({\mathcal M^*}, {\mathcal M'}),\, m\otimes m'\mapsto \tilde{m\otimes m'},$$
where $ \tilde{m\otimes m'}(w):=w(m)\cdot m'$, for any $w\in \mathcal M^*$.
\end{theorem}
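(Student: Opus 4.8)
The plan is to reduce the asserted functorial identity to an identity of $A$-modules over an arbitrary base ring $A$, and then to compute the right-hand side by representing $\mathcal{P}^*$ by a symmetric algebra.

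First I would make the reduction. Since $\mathcal{M}\otimes_{\mathcal{R}}\mathcal{M}'$ is the quasi-coherent module attached to $M\otimes_R M'$, since the footnote gives $\mathbb{Hom}_{\mathcal{R}}(\mathcal{M}^*,\mathcal{M}')(S)=\Hom_{\mathcal{S}}((\mathcal{M}^*)_{|S},\mathcal{M}'_{|S})$, and since $(\mathcal{M}^*)_{|S}=(\mathcal{M}_{|S})^*$ with $\mathcal{M}_{|S},\mathcal{M}'_{|S}$ the quasi-coherent $\mathcal{S}$-modules of $M\otimes_R S$ and $M'\otimes_R S$, it suffices to prove: for every commutative ring $A$ and all $A$-modules $P,Q$, the map
$$P\otimes_A Q\longrightarrow \Hom_{\mathcal{A}}(\mathcal{P}^*,\mathcal{Q}),\qquad p\otimes q\mapsto\bigl(w\mapsto w(p)\cdot q\bigr),$$
is an isomorphism of $A$-modules, naturally in $(A,P,Q)$. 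Applying this with $A=S$, $P=M\otimes_R S$, $Q=M'\otimes_R S$ then gives the theorem; I would first dispatch the routine verification that $\widetilde{m\otimes m'}$ is functorial and $\mathcal{R}$-linear, so that the displayed arrow is a well-defined morphism of $\mathcal{R}$-modules.

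Next I would identify $\mathcal{P}^*$. By \ref{adj2} and \ref{tercer}, $\mathcal{P}^*(S)=\Hom_{\mathcal{A}}(\mathcal{P},\mathcal{S})=\Hom_A(P,S)=\Hom_{A\text{-alg}}(\mathrm{Sym}_A P,\,S)$, so $\mathcal{P}^*=\Spec(\mathrm{Sym}_A P)$; moreover, reading off from $w\mapsto w+w'$ and $w\mapsto tw$ on points, the $\mathcal{A}$-module structure of $\mathcal{P}^*$ corresponds to the coproduct sending $p\mapsto p\otimes 1+1\otimes p$ on $P=\mathrm{Sym}^1_A P$ and to the coaction sending $p\mapsto t\otimes p$, so the ``weight'' of a homogeneous element of $\mathrm{Sym}_A P$ equals its symmetric degree. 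Yoneda then gives $\Hom_{\mathrm{Funct}}(\mathcal{P}^*,\mathcal{Q})=\mathcal{Q}(\mathrm{Sym}_A P)=\bigoplus_{n\ge 0}Q\otimes_A\mathrm{Sym}^n_A P$, and I would single out the morphisms of $\mathcal{A}$-modules among these: a class $\xi=\sum_n\xi_n$ is compatible with scalar multiplication iff $\sum_n t^n\xi_n=t\cdot\xi$ in $A[t]\otimes_A\bigl(\bigoplus_n Q\otimes_A\mathrm{Sym}^n_A P\bigr)$, i.e. iff $\xi=\xi_1\in Q\otimes_A\mathrm{Sym}^1_A P=Q\otimes_A P$; for such $\xi$, compatibility with addition is automatic since $\mathrm{Sym}^1_A P$ consists of primitive elements, and conversely every element of $Q\otimes_A P$ yields a morphism of $\mathcal{A}$-modules. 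This gives $\Hom_{\mathcal{A}}(\mathcal{P}^*,\mathcal{Q})=P\otimes_A Q$, and tracing through the identifications shows the isomorphism is the map displayed above.

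The step I expect to be the main obstacle is the second half of the argument: pinning down precisely the $\mathcal{A}$-module (comodule) structure carried by $\mathcal{P}^*=\Spec(\mathrm{Sym}_A P)$, so that ``weight $1$'' is exactly the correct condition, and making the ``weight $1$ forces homogeneity of degree $1$'' step valid over an arbitrary ring --- in particular in characteristic $p$, where an additive polynomial need not be linear; this is precisely why one invokes compatibility with scalar multiplication (linearity in the $\mathbb{A}^1$-variable), not merely additivity, since $t^m\ne t$ in $A[t]$ for $m\ne 1$. The first reduction is, by contrast, bookkeeping, though one must be careful that $(\mathcal{M}^*)_{|S}=(\mathcal{M}_{|S})^*$ and that naturality in $S$ is respected.
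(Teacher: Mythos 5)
Your argument is correct, but note that this paper does not actually prove Theorem \ref{prop4}: it is imported verbatim from \cite[1.8]{Amel}, so there is no in-document proof to measure you against. What you have written is essentially the classical Demazure--Gabriel argument (the one behind \cite[II, \S 1, 2.5]{gabriel}, which the paper cites for the special case $\mathcal M'=\mathcal R$ in Theorem \ref{reflex}), extended to an arbitrary quasi-coherent target: represent $\mathcal P^*$ by $\mathrm{Sym}_A P$, identify all natural transformations $\mathcal P^*\to\mathcal Q$ with $\bigoplus_n Q\otimes_A\mathrm{Sym}^n_AP$ by Yoneda, and cut out the $\mathcal A$-linear ones as the weight-one part under the scalar coaction $p\mapsto t\otimes p$. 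The two points you flag as delicate are exactly the right ones, and your resolutions are sound: comparing coefficients of $t^m$ in $(Q\otimes_A\mathrm{Sym}_AP)[t]$ kills every $\xi_m$ with $m\neq 1$ (so linearity, not mere additivity, is what forces degree one --- this is indeed essential in characteristic $p$), and additivity is then free because $\mathrm{Sym}^1$ is primitive for the coproduct. The base-change bookkeeping $(\mathcal M^*)_{|S}=(\mathcal M_{|S})^*$ is licensed by the displayed identity $(\mathbb{Hom}_{\mathcal R}(\mathbb M,\mathbb M'))_{|S}=\mathbb{Hom}_{\mathcal S}(\mathbb M_{|S},\mathbb M'_{|S})$ in the preliminaries, and the explicit map $p\otimes q\mapsto (w\mapsto w(p)\cdot q)$ is visibly natural in $S$, so pointwise bijectivity suffices. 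The only stylistic caution: the proof in \cite{Amel} that this theorem actually relies on proceeds somewhat differently in its bookkeeping (and one should not instead try to derive it from Lemma \ref{main} of the present paper, whose proof already invokes \ref{prop4} and would be circular); your self-contained route avoids that entirely.
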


If we make $\mathcal M'={\mathcal R}$ in the previous theorem, we obtain the following theorem.

\begin{theorem} \cite[II,\textsection 1,2.5]{gabriel}  \cite[1.10]{Amel}\label{reflex}
Let $M$ be an $R$-module. Then $${\mathcal M}={\mathcal M^{**}}.$$
\end{theorem}

\begin{definition} Let $\mathbb M$ be  an ${\mathcal R}$-module. We shall say that
$\mathbb M^*$ is a dual functor.
We shall say that  an ${\mathcal R}$-module  ${\mathbb M}$ is reflexive if ${\mathbb M}={\mathbb M}^{**}$.\end{definition}

\begin{example}  Quasi-coherent modules are reflexive.\end{example}

\subsection{$\mathcal R$-module schemes} 

\begin{definition} Let $M$ be an $R$-module. $\mathcal M^*$  will be called the ${\mathcal R}$-module scheme associated with $M$.
\end{definition}

\begin{definition} Let $\mathbb N$ be an ${\mathcal R}$-module. 
We shall denote by $\mathbb N_{sch}$ the ${\mathcal R}$-module scheme defined by $$\mathbb N_{sch}:=((\mathbb N^*)_{qc})^*.$$\end{definition}

\begin{proposition} \label{1211b} Let $\mathbb N$ be an ${\mathcal R}$-module. Then,
\begin{enumerate}
\item  $\mathbb N_{sch}({S})=\Hom_R(\mathbb N^*(R),{S})$.

\item $
\Hom_{{\mathcal R}}(\mathbb N_{sch},\mathcal M)=\mathbb N^*(R)\otimes_R M$,
for any quasi-coherent module $\mathcal M$.

\end{enumerate}

\end{proposition}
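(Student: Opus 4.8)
The plan is to prove Proposition~\ref{1211b} directly from the definitions and the adjunction/reflexivity results already available. Recall $\mathbb N_{sch}:=((\mathbb N^*)_{qc})^*$, so if we write $P:=\mathbb N^*(R)$, then $(\mathbb N^*)_{qc}=\mathcal P$ is the quasi-coherent module associated with $P$, and $\mathbb N_{sch}=\mathcal P^*$.

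For part (1), I would simply unwind: $\mathbb N_{sch}(S)=\mathcal P^*(S)=\Hom_S(\mathcal P(S),S)=\Hom_S(P\otimes_R S,S)$, and then use the standard extension-of-scalars identity $\Hom_S(P\otimes_R S,S)=\Hom_R(P,S)$ to conclude $\mathbb N_{sch}(S)=\Hom_R(\mathbb N^*(R),S)$. (Alternatively this is immediate from Proposition~\ref{adj2} applied to $\mathcal P$, giving $\mathbb M^*(S)=\Hom_{\mathcal R}(\mathcal P,\mathcal S)$, combined with Proposition~\ref{tercer}.) This step is routine.

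For part (2), I would compute, for any quasi-coherent $\mathcal M$:
\begin{align*}
\Hom_{\mathcal R}(\mathbb N_{sch},\mathcal M)
&=\Hom_{\mathcal R}(\mathcal P^*,\mathcal M)
=\Hom_{\mathcal R}(\mathcal P^*,\mathcal M^{**})
=\Hom_{\mathcal R}(\mathcal M^*,\mathcal P^{**})\\
&=\Hom_{\mathcal R}(\mathcal M^*,\mathcal P)
=\Hom_{\mathcal R}(\mathcal M^*\otimes_{\mathcal R}\mathcal P^*,\mathcal R)\\
&=\Hom_{\mathcal R}(\mathcal P^*,\mathcal M^{**})\quad(\text{or directly})
\end{align*}
where I use Theorem~\ref{reflex} (reflexivity of quasi-coherent modules, so $\mathcal M=\mathcal M^{**}$ and $\mathcal P=\mathcal P^{**}$), Proposition~\ref{trivial} (the swap $\Hom_{\mathcal R}(\mathbb A,\mathbb B^*)=\Hom_{\mathcal R}(\mathbb B,\mathbb A^*)$), and finally the quasi-coherent identification $\Hom_{\mathcal R}(\mathcal M^*,\mathcal P)=\Hom_{\mathcal R}(\mathcal M^*, \mathcal R\otimes\text{-type})$. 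The cleanest route: $\Hom_{\mathcal R}(\mathcal P^*,\mathcal M)\overset{\ref{trivial}}{=}\Hom_{\mathcal R}(\mathcal M^*,\mathcal P^{**})\overset{\ref{reflex}}{=}\Hom_{\mathcal R}(\mathcal M^*,\mathcal P)$, and then by Theorem~\ref{prop4} with the roles arranged so that $\mathcal P\otimes_{\mathcal R}\mathcal M=\mathbb Hom_{\mathcal R}(\mathcal P^*,\mathcal M)$ evaluated at $R$, or more simply using Proposition~\ref{tercer}: since $\mathcal M^*=\mathcal M^*$ is not quasi-coherent in general, I instead invoke $\Hom_{\mathcal R}(\mathcal M^*,\mathcal P)=\Hom_{\mathcal R}(\mathcal M^*\otimes_{\mathcal R}\mathcal P^*,\mathcal R)=P\otimes_R M$ via $\mathcal M^*\otimes_{\mathcal R}\mathcal P^*\cong(\mathcal P\otimes_{\mathcal R}\mathcal M)^*$ is \emph{not} automatic — so the correct final step is Theorem~\ref{prop4}: $\mathcal P\otimes_{\mathcal R}\mathcal M=\mathbb Hom_{\mathcal R}(\mathcal P^*,\mathcal M)$, hence taking sections over $R$ gives $\Hom_{\mathcal R}(\mathcal P^*,\mathcal M)=(\mathcal P\otimes_{\mathcal R}\mathcal M)(R)=P\otimes_R M=\mathbb N^*(R)\otimes_R M$.

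The main obstacle is choosing the right bracketing of dualities so that each arrow is one of the already-proven equalities rather than something requiring $\mathcal M^*$ to be quasi-coherent (which it is not). The key realization that resolves this is Theorem~\ref{prop4}, $\mathcal M\otimes_{\mathcal R}\mathcal M'=\mathbb Hom_{\mathcal R}(\mathcal M^*,\mathcal M')$: applying it with the pair $(P,M)$ and evaluating the internal $\mathbb Hom$ at $R$ converts the tensor product $P\otimes_R M$ into exactly $\Hom_{\mathcal R}(\mathcal P^*,\mathcal M)=\Hom_{\mathcal R}(\mathbb N_{sch},\mathcal M)$. So part (2) follows by combining Theorem~\ref{prop4} with the identification of $\mathbb N_{sch}$ as $\mathcal P^*$ from the definition; no delicate argument is needed once the pieces are lined up correctly.
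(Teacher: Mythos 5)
Your proof is correct and, once the false starts in part (2) are discarded, it is essentially the paper's own argument: part (1) is the definitional unwinding $\mathbb N_{sch}(S)=\Hom_{\mathcal R}((\mathbb N^*)_{qc},\mathcal S)=\Hom_R(\mathbb N^*(R),S)$, and part (2) is exactly Theorem~\ref{prop4} applied to $(\mathbb N^*)_{qc}$ and $\mathcal M$ and evaluated at $R$. The only difference is presentational: the paper states the two chains of equalities directly, without the intermediate detours through $\mathcal M^{**}$ and $\mathcal P^{**}$ that you eventually abandon.
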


\begin{proof} 1. $\mathbb N_{sch}({S})=\Hom_{{\mathcal R}}((\mathbb N^*)_{qc},\mathcal {S})=\Hom_R(\mathbb N^*(R),{S})$.

2. $\Hom_{{\mathcal R}}(\mathbb N_{sch},\mathcal M)\overset{\text{\ref{prop4} }}=(\mathbb N^*)_{qc}(R)\otimes_RM=\mathbb N^*(R)\otimes_R M$.

\end{proof}

The natural morphism $(\mathbb N^*)_{qc} \to \mathbb N^*$ corresponds by Proposition \ref{trivial} with a morphism $$\mathbb N\to \mathbb N_{sch}.$$ Specifically, one has the natural morphism
$$\begin{array}{lll} \mathbb N({S}) & \to & \Hom_{R}(\mathbb N^*(R),{S})=\mathbb N_{sch}({S})\\ n & \mapsto & \tilde n,\text{ where } \tilde n(w):=w_{S}(n)\end{array}$$

\begin{proposition} \label{1211} Let $\mathbb N$ be  an ${\mathcal R}$-module and $M$ an $R$-module. Then, the natural morphism
 $$\Hom_{{\mathcal R}}(\mathbb N,\mathcal M^*)\to
\Hom_{{\mathcal R}}(\mathbb N_{sch},\mathcal M^*),$$
is an isomorphism.

\end{proposition}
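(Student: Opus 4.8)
The plan is to transpose both sides via Proposition \ref{trivial}, to identify the resulting Hom-groups using the reflexivity of quasi-coherent modules (Theorem \ref{reflex}) and Proposition \ref{tercerb}, and to check that after these identifications the morphism of the statement becomes an identity map (or its inverse).

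Concretely, by Proposition \ref{trivial} there are natural identifications
$$\Hom_{\mathcal{R}}(\mathbb{N},\mathcal{M}^*)=\Hom_{\mathcal{R}}(\mathcal{M},\mathbb{N}^*),\qquad \Hom_{\mathcal{R}}(\mathbb{N}_{sch},\mathcal{M}^*)=\Hom_{\mathcal{R}}(\mathcal{M},\mathbb{N}_{sch}^*).$$
Since $\mathbb{N}_{sch}=((\mathbb{N}^*)_{qc})^*$ and $(\mathbb{N}^*)_{qc}$ is quasi-coherent, Theorem \ref{reflex} gives $\mathbb{N}_{sch}^*=((\mathbb{N}^*)_{qc})^{**}=(\mathbb{N}^*)_{qc}$, and the morphism $\mathbb{N}_{sch}^*\to\mathbb{N}^*$ dual to $\mathbb{N}\to\mathbb{N}_{sch}$ gets identified with the canonical morphism $(\mathbb{N}^*)_{qc}\to\mathbb{N}^*$. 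Finally, Proposition \ref{tercerb}, applied with the quasi-coherent module $\mathcal{M}$, says that composition with $(\mathbb{N}^*)_{qc}\to\mathbb{N}^*$ gives an isomorphism $\Hom_{\mathcal{R}}(\mathcal{M},(\mathbb{N}^*)_{qc})\to\Hom_{\mathcal{R}}(\mathcal{M},\mathbb{N}^*)$. Putting the three steps together identifies the natural morphism of the statement with this isomorphism (or with its inverse), hence it is an isomorphism.

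The point that deserves attention — and the only real step — is the compatibility underlying the first two steps: that under the identifications coming from Proposition \ref{trivial} the natural morphism $\Hom_{\mathcal{R}}(\mathbb{N},\mathcal{M}^*)\to\Hom_{\mathcal{R}}(\mathbb{N}_{sch},\mathcal{M}^*)$ really corresponds to the map induced by $(\mathbb{N}^*)_{qc}\to\mathbb{N}^*$ on the quasi-coherent side. I would verify this by a direct diagram chase, unwinding the defining formula $\tilde f(n)(m)=f(m)(n)$ of Proposition \ref{trivial} together with the explicit description $n\mapsto\tilde n$, $\tilde n(w)=w_S(n)$, of the morphism $\mathbb{N}\to\mathbb{N}_{sch}$ recalled just before the proposition. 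Once this bookkeeping is settled, Theorem \ref{reflex} and Proposition \ref{tercerb} finish the argument formally.
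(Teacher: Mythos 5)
Your argument is correct and is essentially the paper's own proof: the paper likewise chains $\Hom_{\mathcal R}(\mathbb N,\mathcal M^*)=\Hom_{\mathcal R}(\mathcal M,\mathbb N^*)=\Hom_{\mathcal R}(\mathcal M,(\mathbb N^*)_{qc})=\Hom_{\mathcal R}(\mathbb N_{sch},\mathcal M^*)$ using Proposition \ref{trivial} twice and Proposition \ref{tercerb} once. You merely make explicit the identification $\mathbb N_{sch}^*=(\mathbb N^*)_{qc}$ via Theorem \ref{reflex} and the naturality bookkeeping, both of which the paper leaves implicit.
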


\begin{proof} $\Hom _{{\mathcal R}}(\mathbb N, \!\mathcal M^*) \! \overset{\text{\ref{trivial}}} =\!
\Hom_{{\mathcal R}}(\mathcal M,\!\mathbb N^*)\! \overset{\text{\ref{tercerb}}}=\!\Hom_{{\mathcal R}}(\mathcal M,\! (\mathbb N^*)_{qc})\!{\overset{\text{\ref{trivial}}}=
\Hom_{{\mathcal R}}(\mathbb N_{sch},\mathcal M^*)}.$

\end{proof}

\section{Dually separated   $\mathcal R$-modules}

\begin{definition} We shall say that \label{dualq} an $\mathcal R$-module $\mathbb M$ is  dually separated    if  the natural morphism $\mathbb M^*\to {\mathbb M_{qc}}^*$ is  a monomorphism.\end{definition}

\begin{example} Quasi-coherent modules, $\mathcal M$, are  dually separated, because $\mathcal M^{*}={\mathcal M_{qc}}^*$.
\end{example}

\begin{example} \label{3.3.3} If $M=\oplus_I R$ is a free $R$-module, then $\mathcal M^*$ is  dually separated   : 
The obvious monomorphism $\mathcal M=\oplus_I \mathcal R\to \prod_I\mathcal R$, factors through  $\mathcal M\to \mathcal M_{sch}$, by Proposition \ref{1211}. Hence, the morphism $\mathcal M\to \mathcal M_{sch}$ is a monomorphism. That is, $\mathcal M^{**}=\mathcal M
\to  {{\mathcal M^*}_{qc}}^*$ is  a monomorphism and $\mathcal M^*$ is  dually separated.
\end{example}

\begin{proposition}  \label{3.3.4} The direct limit of a direct system of  dually separated   $\mathcal R$-modules is  dually separated. Every quotient of  a dually separated  $\mathcal R$-module   is  dually separated.\end{proposition}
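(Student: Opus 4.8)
The plan is to prove the two assertions separately, starting with the easier one about quotients.

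For the statement on quotients, let $\mathbb M$ be a dually separated $\mathcal R$-module and let $\pi\colon \mathbb M\to \mathbb M'$ be an epimorphism of $\mathcal R$-modules. First I would observe that $\pi$ induces a surjection $\pi_R\colon \mathbb M(R)\to \mathbb M'(R)$, hence an epimorphism of quasi-coherent modules $\mathbb M_{qc}\to \mathbb M'_{qc}$ (by the equivalence of categories, surjections of $R$-modules go to epimorphisms of quasi-coherent modules). Dualizing, $(\mathbb M'_{qc})^*\to (\mathbb M_{qc})^*$ is a monomorphism (the contravariant functor $\mathbb{Hom}_{\mathcal R}(-,\mathcal R)$ is left exact, being a $\mathbb{Hom}$; concretely $({\mathcal M'_{qc}})^*(S)=\Hom_S(\mathbb M'(R)\otimes_R S,S)\hookrightarrow \Hom_S(\mathbb M(R)\otimes_R S,S)$ because $\mathbb M(R)\otimes_RS\to\mathbb M'(R)\otimes_RS$ is onto). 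Now consider the commutative square formed by the natural morphisms $\mathbb M'^*\to (\mathbb M'_{qc})^*$ and $\mathbb M^*\to (\mathbb M_{qc})^*$ together with $\mathbb M'^*\to \mathbb M^*$ and $(\mathbb M'_{qc})^*\to(\mathbb M_{qc})^*$. The composite $\mathbb M'^*\to (\mathbb M'_{qc})^*\to (\mathbb M_{qc})^*$ equals $\mathbb M'^*\to \mathbb M^*\to (\mathbb M_{qc})^*$, and the latter factors through the monomorphism $\mathbb M^*\to(\mathbb M_{qc})^*$; chasing injectivity around the square shows $\mathbb M'^*\to (\mathbb M'_{qc})^*$ is a monomorphism, i.e.\ $\mathbb M'$ is dually separated. (One should check on each $S$, using that kernels and images of $\mathcal R$-module morphisms are computed sectionwise, as recorded in the Preliminaries.)

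For the direct limit statement, let $\{\mathbb M_i\}_{i\in I}$ be a direct system of dually separated $\mathcal R$-modules over an upward directed $I$, and set $\mathbb M=\ilim{i} \mathbb M_i$. Since $\mathbb M(R)=\ilim{i}\mathbb M_i(R)$ and $-\otimes_R S$ commutes with direct limits, $\mathbb M_{qc}=\ilim{i}(\mathbb M_i)_{qc}$. The key point is that $\mathbb{Hom}_{\mathcal R}(-,\mathcal R)$ turns this direct limit into an inverse limit: for any $S$, $\mathbb M^*(S)=\Hom_{\mathcal S}(\mathbb M_{|S},\mathcal S)=\plim{i}\Hom_{\mathcal S}((\mathbb M_i)_{|S},\mathcal S)=\plim{i}\mathbb M_i^*(S)$, and likewise $(\mathbb M_{qc})^*(S)=\plim{i}((\mathbb M_i)_{qc})^*(S)$. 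The natural morphism $\mathbb M^*\to (\mathbb M_{qc})^*$ is then the inverse limit of the morphisms $\mathbb M_i^*\to ((\mathbb M_i)_{qc})^*$, each of which is a monomorphism by hypothesis. An inverse limit of monomorphisms is a monomorphism (evaluate at each $S$: $\plim{i}A_i\to\plim{i}B_i$ is injective when each $A_i\to B_i$ is, since an element of the left-hand limit dying in the right-hand limit has each component zero), so $\mathbb M$ is dually separated.

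The main obstacle—really the only substantive point—is verifying cleanly that $\mathbb{Hom}_{\mathcal R}(-,\mathcal R)$ converts the relevant direct limit (resp.\ epimorphism) into an inverse limit (resp.\ monomorphism) at the level of functors, and that the natural transformation $\mathbb M^*\to(\mathbb M_{qc})^*$ is compatible with these identifications. This is where one must be careful about what "restriction to $S$-algebras" does to limits and about Proposition~\ref{adj2}; once that bookkeeping is in place, both claims follow from the elementary fact that monomorphisms of $\mathcal R$-modules are detected sectionwise and are stable under inverse limits.
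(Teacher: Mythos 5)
Your proposal is correct and follows essentially the same route as the paper: for quotients, the commutative square relating $\mathbb M'^*\to(\mathbb M'_{qc})^*$ to the monomorphism $\mathbb M^*\to(\mathbb M_{qc})^*$ (together with the fact that the dual of an epimorphism is a monomorphism); for direct limits, the identification $\mathbb M^*=\plim{i}\mathbb M_i^*\hookrightarrow\plim{i}(\mathbb M_{i,qc})^*=(\ilim{i}\mathbb M_{i,qc})^*=(\mathbb M_{qc})^*$. The only difference is that you spell out the sectionwise verifications that the paper leaves implicit.
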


\begin{proof} Let $\mathbb M=\ilim{i} \mathbb M_i$ be a direct limit of  dually separated   $\mathcal R$-modules. Then, the morphism
$$\mathbb M^*=\plim{i}\mathbb M_i^*\hookrightarrow \plim{i} {\mathbb M_{i,qc}}^*=(\ilim{i} \mathbb M_{i,qc})^*={\mathbb M_{qc}}^*$$
is  a monomorphism. Then, $\mathbb M$ is  dually separated.

Let $\mathbb M$ be  dually separated    and $\mathbb M\to \mathbb N$ an epimorphism. 
The morphism $\mathbb N^* \to {\mathbb N_{qc}}^*$ is  a monomorphism because the diagram 
$$\xymatrix{\mathbb N^* \ar@{^{(}->}[d] \ar[r] & {\mathbb N_{qc}}^* \ar@{^{(}->}[d]\\
\mathbb M^* \ar@{^{(}->}[r]& {\mathbb M_{qc}}^*}$$
is commutative. Then, $\mathbb N$ is  dually separated.
\end{proof}

\begin{proposition} \label{QU} If $\mathbb M$ is  a dually separated    $\mathcal R$-module and $S$ is a commutative $R$-algebra, then the  $\mathcal S$-module $\mathbb M_{|S}$ is  dually separated.\end{proposition}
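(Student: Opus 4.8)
The plan is to trace through the definitions and reduce to the fact that the quasi-coherent-core construction and the dual functor construction both behave well under restriction of the base. First I would unwind what must be shown: $\mathbb M_{|S}$ is dually separated means that the natural morphism $(\mathbb M_{|S})^* \to ((\mathbb M_{|S})_{qc})^*$ of $\mathcal S$-modules is a monomorphism, where now $(\mathbb M_{|S})^* = \mathbb Hom_{\mathcal S}(\mathbb M_{|S}, \mathcal S)$ and $(\mathbb M_{|S})_{qc}$ is the quasi-coherent $\mathcal S$-module associated with the $S$-module $\mathbb M_{|S}(S) = \mathbb M(S)$.

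The key identifications I would assemble are: first, $(\mathbb M_{|S})^* = \mathbb Hom_{\mathcal S}(\mathbb M_{|S}, \mathcal S) = (\mathbb M^*)_{|S}$, which is exactly the footnoted/displayed identity $(\mathbb Hom_{\mathcal R}(\mathbb M, \mathbb M'))_{|S} = \mathbb Hom_{\mathcal S}(\mathbb M_{|S}, \mathbb M'_{|S})$ applied with $\mathbb M' = \mathcal R$ (noting $\mathcal R_{|S} = \mathcal S$). Second, I need to compare $(\mathbb M_{|S})_{qc}$ with $(\mathbb M_{qc})_{|S}$: since $\mathbb M_{qc}$ is the quasi-coherent $\mathcal R$-module attached to $\mathbb M(R)$, its restriction $(\mathbb M_{qc})_{|S}$ is the quasi-coherent $\mathcal S$-module attached to $\mathbb M(R)\otimes_R S$, whereas $(\mathbb M_{|S})_{qc}$ is the quasi-coherent $\mathcal S$-module attached to $\mathbb M(S)$. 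These differ in general, but there is a canonical $S$-module map $\mathbb M(R)\otimes_R S \to \mathbb M(S)$ (coming from functoriality of $\mathbb M$ applied to $R\to S$ together with the $S$-module structure), hence a canonical morphism of $\mathcal S$-modules $(\mathbb M_{qc})_{|S} \to (\mathbb M_{|S})_{qc}$, and moreover $(\mathbb M_{|S})_{qc} \to \mathbb M_{|S}$ factors the restriction of the natural morphism $\mathbb M_{qc}\to \mathbb M$ through this map. Dualizing, we get a commutative triangle relating $(\mathbb M_{|S})^* \to ((\mathbb M_{|S})_{qc})^* \to ((\mathbb M_{qc})_{|S})^*$, and the composite is, via the identification $(\mathbb M_{|S})^* = (\mathbb M^*)_{|S}$ and the analogous statement $((\mathbb M_{qc})_{|S})^* = (\mathbb M_{qc}{}^*)_{|S}$, just the restriction to $S$ of the morphism $\mathbb M^* \to \mathbb M_{qc}{}^*$.

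Now I would finish: by hypothesis $\mathbb M^* \to \mathbb M_{qc}{}^*$ is a monomorphism of $\mathcal R$-modules, and restriction to the category of $S$-algebras is exact on kernels — $(\Ker f)_{|S} = \Ker(f_{|S})$ since kernels are computed objectwise and every $S$-algebra is in particular an $R$-algebra — so $(\mathbb M^*)_{|S} \to (\mathbb M_{qc}{}^*)_{|S}$ is a monomorphism of $\mathcal S$-modules. Since this monomorphism factors through $(\mathbb M_{|S})^* \to ((\mathbb M_{|S})_{qc})^*$, the first arrow in that factorization is a monomorphism as well, which is precisely the assertion that $\mathbb M_{|S}$ is dually separated. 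The main obstacle is bookkeeping rather than anything deep: one has to be careful that $(\mathbb M_{|S})_{qc}$ is built from $\mathbb M(S)$, not from $\mathbb M(R)\otimes_R S$, and organize the diagram so that the non-isomorphism $(\mathbb M_{qc})_{|S}\to(\mathbb M_{|S})_{qc}$ is used only in the direction where it is harmless — i.e. we only ever need that a composite through it is mono, which forces the first factor to be mono.
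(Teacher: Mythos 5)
Your proof is correct and follows essentially the same route as the paper: both identify $(\mathbb M_{|S})^*$ with $(\mathbb M^*)_{|S}$ and factor the (objectwise injective) restriction of $\mathbb M^*\to {\mathbb M_{qc}}^*$ through the map $(\mathbb M_{|S})^*\to ((\mathbb M_{|S})_{qc})^*$, forcing the latter to be a monomorphism. The paper simply writes this commutative square evaluated at each commutative $S$-algebra $T$ rather than naming the intermediate functor $(\mathbb M_{qc})_{|S}$.
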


\begin{proof} Let $S$ be a commutative $R$-algebra and let $T$ be a commutative $T$-algebra. The diagram
$$\xymatrix{{\mathbb M_{|S}}^*(T)=\Hom_{\mathcal T}(\mathbb M_{|T},\mathcal T) \ar@{=}[r] \ar[d] &
\mathbb M^*(T)\ar@{^{(}->}[d]
\\ {\mathbb M_{|S, qc}}^*(T)= \Hom_{\mathcal S}(\mathbb M(S),T) \ar[r] & \Hom_{R}(\mathbb M(R),T)={\mathbb M_{qc}}^*(T)}$$
is commutative, then the morphism ${\mathbb M_{|S}}^*\to  {\mathbb M_{|S,qc}}^*$ is  a monomorphism. \end{proof}

\begin{theorem}  \label{W1} An $\mathcal R$-module $\mathbb M$ is  dually separated   
 iff the map
$$\Hom_{\mathcal R}(\mathbb M,\mathcal N)\to \Hom_{R}(\mathbb M(R),N),\quad f\mapsto f_R$$
is  injective,  for any $R$-module $N$.\end{theorem}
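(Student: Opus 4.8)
The plan is to unwind the definition of ``dually separated'' using the earlier adjunction results and show that the monomorphism condition on $\mathbb M^*\to {\mathbb M_{qc}}^*$ is exactly a pointwise injectivity statement, which in turn translates into the injectivity of $f\mapsto f_R$ via Proposition~\ref{tercer}. The key observation is that, by Proposition~\ref{adj2} (or rather Proposition~\ref{tercer} applied to the quasi-coherent module $\mathcal N$), for any $R$-module $N$ one has a natural identification $\Hom_{\mathcal R}(\mathbb M,\mathcal N)=\Hom_R(\mathbb M(R),N)$ precisely when $\mathbb M$ is quasi-coherent; in general we only have the map $f\mapsto f_R$, and the content of the theorem is to recognize its kernel.

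First I would observe that evaluating $\mathbb M^*\to {\mathbb M_{qc}}^*$ at a commutative $R$-algebra $S$ gives, by Proposition~\ref{adj2}, the map
$$\mathbb M^*(S)=\Hom_{\mathcal R}(\mathbb M,\mathcal S)\to \Hom_{\mathcal R}(\mathbb M_{qc},\mathcal S)=\Hom_R(\mathbb M(R),S),$$
and by Proposition~\ref{tercerb} this is precomposition with the natural morphism $\mathbb M_{qc}\to\mathbb M$; concretely it is $w\mapsto w_R$. So $\mathbb M^*\to{\mathbb M_{qc}}^*$ is a monomorphism iff for every $S$ the map $\Hom_{\mathcal R}(\mathbb M,\mathcal S)\to\Hom_R(\mathbb M(R),S)$, $w\mapsto w_R$, is injective. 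This already gives the implication from ``dually separated'' to the stated injectivity in the special case $N=S$, but we need it for an arbitrary $R$-module $N$, not just for $R$-algebras.

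The bridge to arbitrary $N$ is Proposition~\ref{adj2} again: for an $R$-module $N$, $\Hom_{\mathcal R}(\mathbb M,\mathcal N)$ can be computed after base change. More directly, I would argue as follows. Suppose $\mathbb M$ is dually separated and let $f\colon\mathbb M\to\mathcal N$ have $f_R=0$. For each $w\in N^*(R)=\Hom_R(N,R)$ — more generally for each $S$ and each $S$-linear $w\colon N\otimes_RS\to S$ — the composite $w\circ f_{|S}\colon \mathbb M_{|S}\to\mathcal N_{|S}\to\mathcal S$ is an element of $\mathbb M^*(S)$ whose value at $R$-points (suitably interpreted after restriction) vanishes; using that $\mathbb M_{|S}$ is also dually separated (Proposition~\ref{QU}) we get $w\circ f_{|S}=0$ for all such $w$. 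Since the quasi-coherent module $\mathcal N$ embeds into a product of copies of $\mathcal R$ after enough base change — or more simply since $\mathcal N\hookrightarrow\mathcal N^{**}$ separates points by covectors — this forces $f=0$. Conversely, if the stated injectivity holds for all $N$, take $N=S$ a commutative $R$-algebra to recover injectivity of $\mathbb M^*(S)\to{\mathbb M_{qc}}^*(S)$ for all $S$, hence $\mathbb M^*\to{\mathbb M_{qc}}^*$ is a monomorphism.

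The main obstacle I anticipate is the implication ``dually separated $\Rightarrow$ injectivity for arbitrary modules $N$'', since the definition only directly controls maps into $\mathcal S$ for algebras $S$, whereas a general $R$-module $N$ is not an $R$-algebra. The cleanest way around this is to reduce to covectors: a morphism $f\colon\mathbb M\to\mathcal N$ is zero iff $w\circ f=0$ for every $w$ in a separating family of $R$-linear forms on $N$ valued in $R$ (or, allowing base change, valued in cyclic $R$-algebras $R$ itself); equivalently one embeds $\mathcal N$ into $\mathcal N^{**}$, which is an inverse limit of quasi-coherent modules of the form $\mathcal S$, and applies the definition termwise together with Proposition~\ref{QU}. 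Everything else is a routine diagram chase through Propositions~\ref{trivial}, \ref{adj2}, \ref{tercer} and \ref{tercerb}.
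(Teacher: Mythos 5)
Your converse direction matches the paper's: take $N=S$ a commutative $R$-algebra and unwind the definition. The forward direction is where the real content lies, and you have correctly located the difficulty (an arbitrary $N$ is not an algebra), but two of the three escape routes you sketch are not available. First, $R$-linear forms $N\to R$ do \emph{not} separate the points of a general $R$-module (take $N=\mathbb Z/2\mathbb Z$ or $N=\mathbb Q$ over $R=\mathbb Z$, where $N^*=0$), so ``reduce to $R$-valued covectors'' fails. Second, $\mathcal N$ does \emph{not} in general admit a monomorphism into $\prod^I\mathcal R$: by Proposition \ref{4.4} that is equivalent to $N$ itself being flat strict Mittag-Leffler, so it is false for most $N$. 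Your third variant does work, but needs to be stated correctly: since $\mathcal N=\mathcal N^{**}$ (Theorem \ref{reflex}), $\mathcal N$ is separated, so if $f_S(m)\neq 0$ for some $S$ and $m\in\mathbb M(S)$, there exist a commutative $S$-algebra $T$ and $w\colon \mathcal N_{|T}\to\mathcal T$ with $w(f_T(m))\neq 0$. Then $w\circ f_{|T}$ is a nonzero element of $\mathbb M^*(T)$ whose image in ${\mathbb M_{qc}}^*(T)=\Hom_R(\mathbb M(R),T)$ is the composite $\mathbb M(R)\to\mathbb M(T)\to N\otimes_RT\to T$, which equals $w_T\circ(N\to N\otimes_RT)\circ f_R=0$; this contradicts the monomorphism $\mathbb M^*\to{\mathbb M_{qc}}^*$ evaluated at $T$. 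Note that this invokes the definition of dually separated directly at $T$; Proposition \ref{QU} is not the right tool here, because dual separatedness of $\mathbb M_{|S}$ would require the vanishing of $w_S\circ f_S$ on all of $\mathbb M(S)$, which is exactly what you are trying to prove rather than what you know.

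The paper's own argument is shorter and avoids Theorem \ref{reflex} entirely: given $N$, form the trivial square-zero extension $S:=R\oplus N$ with $(r,n)\cdot(r',n')=(rr',rn'+r'n)$. Then $\mathcal S=\mathcal R\oplus\mathcal N$ as $\mathcal R$-modules, so $\Hom_{\mathcal R}(\mathbb M,\mathcal N)\to\Hom_R(\mathbb M(R),N)$ is a direct summand of the map $\Hom_{\mathcal R}(\mathbb M,\mathcal S)\to\Hom_R(\mathbb M(R),S)$, whose injectivity is precisely the definition of dually separated. If you keep your route, base it on the separatedness of $\mathcal N$ (covectors after base change), not on $N^*$ or on an embedding into $\prod^I\mathcal R$.
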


\begin{proof} If the natural morphism $\mathbb M^*\to {\mathbb M_{qc}}^*$ is  a monomorphism, then
$$\Hom_{\mathcal R}(\mathbb M,\mathcal S)\hookrightarrow \Hom_{R}(\mathbb M(R),S),$$ is injective for any commutative $R$-algebra $S$. Given an $R$-module $N$, consider the $R$-algebra $S:=R\oplus N$, with the multiplication operation $(r,n)\cdot (r',n'):=(rr',rn'+r'n)$. The composite morphism
$$\Hom_{\mathcal R}(\mathbb M, \mathcal R\oplus\mathcal N)=\Hom_{\mathcal R}(\mathbb M,\mathcal S)\hookrightarrow \Hom_{R}(\mathbb M(R),S)= \Hom_{R}(\mathbb M(R), R\oplus N)$$ is injective. 
Hence, $\Hom_{\mathcal R}(\mathbb M,\mathcal N)\to \Hom_{R}(\mathbb M(R),  N)$ is injective.

Reciprocally,
$\mathbb M^*(S)=\Hom_{\mathcal R}(\mathbb M,\mathcal S)\hookrightarrow
\Hom_{R}(\mathbb M(R),S)={\mathbb M_{qc}}^*(S)$ is injective
for any commutative $R$-algebra $S$, hence the morphism $\mathbb M^*\to {\mathbb M_{qc}}^*$ is  a monomorphism.
\end{proof}

\begin{theorem} \label{QHom} Let $\mathbb M$ be an $\mathcal R$-module. $\mathbb M$ is  dually separated    iff the morphism
$$\Hom_{\mathcal R}(\mathbb M,\mathbb M')\to \Hom_{R}(\mathbb M(R),\mathbb M'(R)),\quad f\mapsto f_R$$
is injective, for every dual  $\mathcal R$-module $\mathbb M'=\mathbb N^*$.\end{theorem}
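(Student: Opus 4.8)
The plan is to turn the statement, by means of the adjunction isomorphisms already available in Section 2, into the very definition of ``dually separated'' (Definition \ref{dualq}): that the natural morphism $\mathbb M^*\to (\mathbb M_{qc})^*$ is a monomorphism of $\mathcal R$-modules.

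First I would rewrite both sides of the arrow $f\mapsto f_R$. For the source, Proposition \ref{trivial} gives a natural identification $\Hom_{\mathcal R}(\mathbb M,\mathbb N^*)=\Hom_{\mathcal R}(\mathbb N,\mathbb M^*)$. For the target, Proposition \ref{tercer} applied to the quasi-coherent module $\mathbb M_{qc}$ and the $\mathcal R$-module $\mathbb N^*$ gives $\Hom_R(\mathbb M(R),\mathbb N^*(R))=\Hom_{\mathcal R}(\mathbb M_{qc},\mathbb N^*)$, and then Proposition \ref{trivial} again gives $\Hom_{\mathcal R}(\mathbb M_{qc},\mathbb N^*)=\Hom_{\mathcal R}(\mathbb N,(\mathbb M_{qc})^*)$. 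Thus both sides are reinterpreted as $\Hom_{\mathcal R}(\mathbb N,-)$ applied, respectively, to $\mathbb M^*$ and to $(\mathbb M_{qc})^*$.

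The key step — routine but the one place some care is needed — is checking that under these identifications the map $f\mapsto f_R$ becomes the map
$$\Hom_{\mathcal R}(\mathbb N,\mathbb M^*)\to \Hom_{\mathcal R}(\mathbb N,(\mathbb M_{qc})^*)$$
given by post-composition with the natural morphism $\mathbb M^*\to (\mathbb M_{qc})^*$ induced by $\mathbb M_{qc}\to\mathbb M$, i.e. exactly the morphism appearing in Definition \ref{dualq}. I would verify this by chasing an $f\colon \mathbb M\to\mathbb N^*$ through the identifications: by Proposition \ref{tercer}, $f_R$ corresponds to $f\circ(\mathbb M_{qc}\to\mathbb M)\colon\mathbb M_{qc}\to\mathbb N^*$ — here one uses that $\mathbb M_{qc}(R)=\mathbb M(R)\otimes_R R\to\mathbb M(R)$ is the canonical isomorphism — and then, unwinding the formula $\tilde f(n)(m)=f(m)(n)$ from Proposition \ref{trivial}, one gets $\widetilde{f\circ(\mathbb M_{qc}\to\mathbb M)}=(\mathbb M^*\to(\mathbb M_{qc})^*)\circ\tilde f$.

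Finally I would conclude formally: a morphism $g$ of $\mathcal R$-modules is a monomorphism if and only if $\Hom_{\mathcal R}(\mathbb N,g)$ is injective for every $\mathcal R$-module $\mathbb N$ — for the nontrivial implication one applies the injectivity with $\mathbb N=\Ker g$ to the inclusion $\Ker g\hookrightarrow\mathbb M^*$, which $g$ kills, forcing $\Ker g=0$. Since a dual $\mathcal R$-module is by definition one of the form $\mathbb N^*$ with $\mathbb N$ an arbitrary $\mathcal R$-module, the condition ``$f\mapsto f_R$ is injective for every dual $\mathbb M'=\mathbb N^*$'' becomes, via the identification above, exactly ``$\mathbb M^*\to(\mathbb M_{qc})^*$ is a monomorphism'', i.e. $\mathbb M$ is dually separated. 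The only genuine obstacle is the bookkeeping in the middle step; everything else is formal. (Incidentally, combined with the fact that quasi-coherent modules are reflexive, hence dual, this also yields the forward implication of Theorem \ref{W1}.)
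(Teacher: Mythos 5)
Your forward implication is, up to presentation, the paper's own argument: the identifications of Propositions \ref{trivial} and \ref{tercer} assemble into exactly the commutative diagram the authors draw, exhibiting $f\mapsto f_R$ as $\Hom_{\mathcal R}(\mathbb N,-)$ applied to the canonical morphism $\mathbb M^*\to{\mathbb M_{qc}}^*$, which is a monomorphism by Definition \ref{dualq}; you merely make explicit the compatibility check that the paper leaves implicit. Where you genuinely diverge is the converse. The paper dispatches it in one line by citing Theorem \ref{W1}: a quasi-coherent module $\mathcal N$ equals $\mathcal N^{**}=(\mathcal N^*)^*$ by Theorem \ref{reflex}, hence is itself a dual module, so the hypothesis specialized to $\mathbb M'=\mathcal N$ is precisely the injectivity criterion of \ref{W1}. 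You instead stay inside the identification and argue that if post-composition with $g\colon\mathbb M^*\to{\mathbb M_{qc}}^*$ is injective on $\Hom_{\mathcal R}(\mathbb N,\mathbb M^*)$ for all $\mathbb N$, then $g$ is a monomorphism, by taking $\mathbb N=\Ker g$. That is logically sound and has the virtue of not routing through \ref{W1} (whose proof needs the trivial-extension algebra $R\oplus N$), but it costs something the paper's route does not: you must apply the theorem's hypothesis to the dual module $(\Ker g)^*$, and in this paper dual modules are only admitted when $\mathbb Hom_{\mathcal R}(\mathbb N,\mathcal R)$ is a well-defined functor (see the footnote in Section 2); for the submodule $\Ker g\subseteq\mathbb M^*$ this is not verified. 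The paper's specialization to quasi-coherent $\mathbb M'$ sidesteps that issue entirely, at the price of depending on the previously proved Theorem \ref{W1}. The mathematical content is otherwise the same.
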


\begin{proof} $\Rightarrow)$ From the commutative diagram
$$\xymatrix{\Hom_{\mathcal R}(\mathbb M,\mathbb M') \ar@{=}[r]^-{\text{\ref{trivial}}} \ar[d]
& \Hom_{\mathcal R}(\mathbb N,\mathbb M^*) \ar@{^{(}->}[r]^-{\text{\ref{dualq}}}
&   \Hom_{\mathcal R}(\mathbb N,{\mathbb M_{qc}}^*)\\
\Hom_{R}(\mathbb M(R),\mathbb M'(R)) \ar@{=}[r]^-{\text{\ref{tercer}}} & \Hom_{\mathcal R}({\mathbb M_{qc}},\mathbb M')
\ar@{=}[ur]^-{\text{\ref{trivial}}}&}$$
 one deduces that the morphism $\Hom_{\mathcal R}(\mathbb M,\mathbb M')\to \Hom_{R}(\mathbb M(R),\mathbb M'(R))$ is injective.
 
 $\Leftarrow)$ It is an immediate consequence of Theorem \ref{W1}.\end{proof}

\begin{proposition} \label{P10}
Let $\mathbb A$ be an ${\mathcal R}$-algebra and  dually separated, let $\mathcal M$ and $\mathcal N$ be $\mathbb A$-modules and let $M'$ be a direct summand of $M$.  Then,
\begin{enumerate}
\item $\mathcal M'$ is a quasi-coherent
$\mathbb A$-submodule of $\mathcal M$ iff $M'$ is an
$\mathbb A(R)$-submodule of $M$.

\item A morphism $f\colon \mathcal M\to \mathcal N$ of $\mathcal R$-modules is a morphism of $\mathbb A$-modules iff $f_R\colon M\to N$ is a morphism of $\mathbb A(R)$-modules.

\end{enumerate}

\end{proposition}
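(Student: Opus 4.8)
The plan is to reduce both assertions to the criterion of Theorem \ref{W1}, using the direct‑summand hypothesis to package the relevant identities as morphisms \emph{out of $\mathbb A$ itself}, rather than out of $\mathbb A\otimes_{\mathcal R}\mathcal M$, which we have no reason to believe is dually separated. In both parts one implication is immediate: restricting an $\mathbb A$‑module structure, or an $\mathbb A$‑linear morphism, along $R\to S$ with $S=R$ yields the corresponding $\mathbb A(R)$‑statement on $\mathcal M(R)=M$ and $\mathcal N(R)=N$. So all the content lies in the converse implications, and $\mathbb A$ being dually separated is the hypothesis that will be used each time.

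For part (2): assume $f_R\colon M\to N$ is $\mathbb A(R)$‑linear (recall $f$ and $f_R$ determine one another by the equivalence between $R$‑modules and quasi‑coherent $\mathcal R$‑modules). For any commutative $R$‑algebra $S$, both $\xi\mapsto f_S(a\cdot\xi)$ and $\xi\mapsto a\cdot f_S(\xi)$ are additive and $S$‑linear in $\xi\in\mathcal M(S)=M\otimes_R S$, and $M\otimes_R S$ is generated as an $S$‑module by the elements $m\otimes 1$ with $m\in M$; hence it suffices to prove $f_S(a\cdot(m\otimes 1))=a\cdot f_S(m\otimes 1)$ for all $S$, all $a\in\mathbb A(S)$ and all $m\in M$. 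Fix $m\in M$ and put $g_m(a):=f_S(a\cdot(m\otimes 1))-a\cdot f_S(m\otimes 1)\in\mathcal N(S)$. One checks from the module axioms that $g_m$ is a morphism of $\mathcal R$‑modules $\mathbb A\to\mathcal N$: additivity and $S$‑linearity in $a$ are formal, and naturality in $S$ holds because $f$ and the two $\mathbb A$‑actions are natural and $m\otimes 1\in\mathcal M(S)$ maps to $m\otimes 1\in\mathcal M(S')$. Since $\mathbb A$ is dually separated, Theorem \ref{W1} forces $g_m=0$ once $(g_m)_R=0$; and $(g_m)_R(a_0)=f_R(a_0\cdot m)-a_0\cdot f_R(m)=0$ for every $a_0\in\mathbb A(R)$, by hypothesis. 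Thus $g_m=0$ for all $m$, i.e. $f$ is $\mathbb A$‑linear.

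For part (1): write $M=M'\oplus M''$ and let $e\colon M\to M$ be the idempotent $R$‑linear projection with $\Ima e=M'$. Then $\tilde e\colon\mathcal M\to\mathcal M$ is an idempotent morphism of $\mathcal R$‑modules with $(\Ima\tilde e)(S)=\Ima(e\otimes\operatorname{id}_S)=M'\otimes_R S=\mathcal M'(S)$, so $\mathcal M'(S)=\{\xi\in\mathcal M(S):\tilde e_S(\xi)=\xi\}$. Hence $\mathcal M'$ is an $\mathbb A$‑submodule of $\mathcal M$ exactly when $\tilde e_S\bigl(a\cdot\tilde e_S(\xi)\bigr)=a\cdot\tilde e_S(\xi)$ for all $S$, $a\in\mathbb A(S)$, $\xi\in\mathcal M(S)$; as in part (2), $S$‑linearity and additivity in $\xi$ reduce this to $\xi=m\otimes 1$, where $\tilde e_S(m\otimes 1)=e(m)\otimes 1$. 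Fixing $m$, the map $g_m(a):=a\cdot(e(m)\otimes 1)-\tilde e_S\bigl(a\cdot(e(m)\otimes 1)\bigr)$ is again a morphism of $\mathcal R$‑modules $\mathbb A\to\mathcal M$, and by Theorem \ref{W1} it vanishes iff $(g_m)_R$ vanishes, i.e. iff $a_0\cdot e(m)\in\Ima e=M'$ for all $a_0\in\mathbb A(R)$. Letting $m$ range over $M$, so that $e(m)$ ranges over $M'$, all the $g_m$ vanish iff $\mathbb A(R)\cdot M'\subseteq M'$, i.e. iff $M'$ is an $\mathbb A(R)$‑submodule of $M$.

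The steps that will require the most care in writing are the bookkeeping ones: verifying that each $g_m$ really is a morphism of $\mathcal R$‑modules (naturality in the test algebra and $S$‑linearity in $a$, both of which come down to the $S$‑algebra and module axioms for $\mathbb A$ and $\mathcal M$), and justifying the reduction to testing identities on the generators $m\otimes 1$. I do not expect a genuine obstacle beyond this; the whole point is that dual separation of $\mathbb A$ (Theorem \ref{W1}) lets one detect vanishing of a morphism out of $\mathbb A$ on $R$‑points, and the direct‑summand hypothesis is exactly what realises $\mathcal M'$ as the image of an idempotent $\tilde e$ induced from an $R$‑module map, so that the membership/linearity conditions can be recast as morphisms out of $\mathbb A$.
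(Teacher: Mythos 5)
Your argument is correct, and it reaches the conclusion by a noticeably different technical route from the paper's. The paper packages each condition as a \emph{single} morphism out of $\mathbb A$ with values in an internal Hom: for (2) it forms $F\colon \mathbb A\otimes\mathcal M\to\mathcal N$, transposes it to $\mathbb A\to\mathbb Hom_{\mathcal R}(\mathcal M,\mathcal N)$, and invokes Theorem \ref{QHom} (injectivity of $f\mapsto f_R$ for morphisms into a \emph{dual} functor, here $\mathbb Hom_{\mathcal R}(\mathcal M,\mathcal N)=(\mathcal M\otimes_{\mathcal R}\mathcal N^*)^*$); for (1) it considers $h\colon\mathbb A\to\mathbb Hom_{\mathcal R}(\mathcal M',\mathcal M)=\mathbb Hom_{\mathcal R}(\mathcal M',\mathcal M')\times\mathbb Hom_{\mathcal R}(\mathcal M',\mathcal M'')$ and kills the second component $h_2$ because $(h_2)_R=0$. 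You instead curry in the module variable: you fix $m\in M$, obtain a \emph{family} of morphisms $g_m\colon\mathbb A\to\mathcal N$ (resp.\ $\mathbb A\to\mathcal M$) with quasi-coherent target, and apply only Theorem \ref{W1}, using the idempotent $\tilde e$ in place of the product decomposition of the internal Hom. What your version buys is economy of machinery: you never need $\mathbb Hom_{\mathcal R}(\mathcal M,\mathcal N)$ to be a (well-defined) dual functor nor the dual-functor criterion \ref{QHom}, only the quasi-coherent criterion \ref{W1}. The price is the bookkeeping you yourself flag: the reduction to the generators $m\otimes 1$ and the verification that each $g_m$ is natural and $S$-linear in $a$ — checks that the paper's functorial formulation absorbs into the adjunction $\Hom_{\mathcal R}(\mathbb A\otimes\mathcal M,\mathcal N)=\Hom_{\mathcal R}(\mathbb A,\mathbb Hom_{\mathcal R}(\mathcal M,\mathcal N))$. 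Both arguments rest on the same principle (dual separation of $\mathbb A$ detects vanishing on $R$-points), and I see no gap in yours.
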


\begin{proof} $(1)$
Obviously, if $\mathcal M'$ is an $\mathbb A$-submodule of $\mathcal M$ then $M'$ is
an $\mathbb A(R)$-submodule of $M$. Inversely, assume  $M=M'\oplus M''$ and   assume $M'$ is an
$\mathbb A(R)$-submodule of $M$. Let us consider the morphism 
$h\colon \mathbb A \to \mathbb Hom_{\mathcal R}( {\mathcal M}', {\mathcal M})$, $h(a):=a\cdot $. Write 
$$ \mathbb Hom_{\mathcal R}( {\mathcal M}', {\mathcal M})=  \mathbb Hom_{\mathcal R}( {\mathcal M}', {\mathcal M'})\times  \mathbb Hom_{\mathcal R}( {\mathcal M}', {\mathcal M''})$$
and write $h=(h_1,h_2)$. As $h_R=({h_1}_R,0)$, then $h_2=0$ and $\mathcal M'$ is an 
$\mathbb A$-submodule of $\mathcal M$. 

$(2)$ The morphism $f$ is a morphism of $\mathbb A$-modules iff $F\colon \mathbb A\otimes \mathcal M\to \mathcal N$, $F(a\otimes m):=f(am)-af(m)$ is the zero morphism. Likewise, $f_R$ is a morphism of $\mathbb A(R)$-modules iff $F_R\colon \mathbb A(R)\otimes M\to N$, $F_R(a\otimes m)=f_R(am)-af_R(m)$ is the zero morphism. Now, it easy to conclude the proof because the composite morphism 
$$\aligned \Hom_{\mathcal R}(\mathbb A\otimes \mathcal M,\mathcal N) & =
\Hom_{\mathcal R}(\mathbb A,\mathbb Hom_{\mathcal R}(\mathcal M,\mathcal N))
\underset{\text{\ref{QHom}}}\hookrightarrow \Hom_{R}(\mathbb A(R),\Hom_{\mathcal R}(\mathcal M,\mathcal N))
\\ & =  \Hom_{R}(\mathbb A(R),\Hom_{R}(M,N))=
  \Hom_{R}(\mathbb A(R)\otimes M,N)\endaligned$$
is injective.

\end{proof}

\begin{example} Let $G=\Spec A$ be an affine group $R$-scheme. The category of comodules over $A$ is equivalent to the category of quasi-coherent $G^\cdot$-modules ($G^\cdot$ is the functor defined by $G^\cdot(S)=\Hom_{R-alg}(A,S)$).  The category of quasi-coherent $G^\cdot$-modules is equal to the category of quasi-coherent $\mathcal A^*$-modules (see \cite[5.5]{Amel}). Let $M$ and $N$ be $A$-comodules and $f\colon M\to N$ a  morphism of $R$-modules. Then, $f$ is a morphism of $A$-comodules iff $f$ is a morphism of $A^*$-modules.
A direct summand  $M'\subseteq M$ is a $A$-subcomodule iff $ M'$ is an $ A^*$-submodule of $M$.

\end{example}

\begin{proposition} If $\mathbb M$ and $\mathbb M'$ are  dually separated,
$\mathbb M\otimes_{\mathcal R} \mathbb M'$ is  dually separated.

\end{proposition}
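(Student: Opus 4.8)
The plan is to reduce the statement to the characterization of dually separated modules given in Theorem \ref{QHom} (or equivalently Theorem \ref{W1}), since that is the most workable criterion: $\mathbb M\otimes_{\mathcal R}\mathbb M'$ is dually separated iff the map
$$\Hom_{\mathcal R}(\mathbb M\otimes_{\mathcal R}\mathbb M',\mathbb N^*)\to \Hom_R((\mathbb M\otimes_{\mathcal R}\mathbb M')(R),\mathbb N^*(R))$$
is injective for every dual module $\mathbb N^*$. First I would use the tensor-hom adjunction in the category of $\mathcal R$-modules, $\Hom_{\mathcal R}(\mathbb M\otimes_{\mathcal R}\mathbb M',\mathbb N^*)=\Hom_{\mathcal R}(\mathbb M,\mathbb Hom_{\mathcal R}(\mathbb M',\mathbb N^*))$, together with Proposition \ref{trivial} which identifies $\mathbb Hom_{\mathcal R}(\mathbb M',\mathbb N^*)$ with a dual module, namely $(\mathbb M'\otimes_{\mathcal R}\mathbb N)^*$. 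So the left-hand group is $\Hom_{\mathcal R}(\mathbb M,(\mathbb M'\otimes_{\mathcal R}\mathbb N)^*)$, and since $\mathbb M$ is dually separated, Theorem \ref{QHom} says the evaluation-at-$R$ map from this group to $\Hom_R(\mathbb M(R),(\mathbb M'\otimes_{\mathcal R}\mathbb N)^*(R))$ is injective.

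Next I would peel off $\mathbb M'$ in the same way. The target $\Hom_R(\mathbb M(R),(\mathbb M'\otimes_{\mathcal R}\mathbb N)^*(R))$ should be rewritten, via Proposition \ref{tercer} and the adjunction, as $\Hom_{\mathcal R}(\mathcal{\mathbb M(R)},(\mathbb M'\otimes_{\mathcal R}\mathbb N)^*)=\Hom_{\mathcal R}(\mathbb M'\otimes_{\mathcal R}\mathbb N,\mathcal{\mathbb M(R)}^*)=\Hom_{\mathcal R}(\mathbb M',\mathbb Hom_{\mathcal R}(\mathbb N,\mathcal{\mathbb M(R)}^*))$, and again by Proposition \ref{trivial} the inner Hom is a dual module. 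Since $\mathbb M'$ is dually separated, Theorem \ref{QHom} applies once more and the evaluation-at-$R$ map is injective on this group too. Composing the two injective maps and checking that the composite agrees with the evaluation-at-$R$ map for $\mathbb M\otimes_{\mathcal R}\mathbb M'$ — this is a diagram-chase with the natural identifications, using that $(\mathbb M\otimes_{\mathcal R}\mathbb M')(R)=\mathbb M(R)\otimes_R\mathbb M'(R)$ — gives the desired injectivity, hence $\mathbb M\otimes_{\mathcal R}\mathbb M'$ is dually separated.

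Alternatively, and perhaps more cleanly, I would work directly with the definition (\ref{dualq}): one wants $(\mathbb M\otimes_{\mathcal R}\mathbb M')^*\to ((\mathbb M\otimes_{\mathcal R}\mathbb M')_{qc})^*$ to be a monomorphism. Using Proposition \ref{trivial}, $(\mathbb M\otimes_{\mathcal R}\mathbb M')^*=\mathbb Hom_{\mathcal R}(\mathbb M,{\mathbb M'}^*)$; then the monomorphism ${\mathbb M'}^*\hookrightarrow {\mathbb M'_{qc}}^*$ (since $\mathbb M'$ is dually separated) induces, via Theorem \ref{QHom} applied $\mathcal S$-locally using Proposition \ref{QU}, a monomorphism $\mathbb Hom_{\mathcal R}(\mathbb M,{\mathbb M'}^*)\hookrightarrow \mathbb Hom_{\mathcal R}(\mathbb M,{\mathbb M'_{qc}}^*)$, and then the dual-separatedness of $\mathbb M$ handles the remaining factor. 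The point I expect to require the most care is that the criteria of Theorems \ref{W1} and \ref{QHom} are stated as injectivity of a single $\Hom$-set map, not as $\mathcal S$-locally monomorphic statements, so one must either invoke Proposition \ref{QU} to transfer dual-separatedness to every base change $\mathbb M_{|S}$ and reassemble, or be careful to phrase everything through the internal $\mathbb Hom$ so that the needed monomorphisms hold at every $S$ simultaneously. Once that bookkeeping is set up, every step is a formal consequence of Propositions \ref{trivial}, \ref{tercer}, \ref{tercerb}, \ref{QU} and Theorems \ref{W1}, \ref{QHom}, with no genuine computation.
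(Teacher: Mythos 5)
Your first argument is exactly the paper's proof: it applies the tensor--hom adjunction to write $\Hom_{\mathcal R}(\mathbb M\otimes_{\mathcal R}\mathbb M',\mathbb N)=\Hom_{\mathcal R}(\mathbb M,\mathbb Hom_{\mathcal R}(\mathbb M',\mathbb N))$, notes via Proposition \ref{trivial} that the inner $\mathbb Hom$ is again a dual module, and then applies Theorem \ref{QHom} twice (once for $\mathbb M$, once for $\mathbb M'$) to get injectivity into $\Hom_R(\mathbb M(R)\otimes_R\mathbb M'(R),\mathbb N(R))$. The extra caution about needing Proposition \ref{QU} is unnecessary for this route, since Theorem \ref{QHom} only requires injectivity of the single evaluation-at-$R$ map for each dual $\mathbb N$, which is all the paper checks.
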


\begin{proof} Let $\mathbb N$ be a dual $\mathcal R$-module. Then,
the composite morphism
$$\aligned & \Hom_{\mathcal R}(\mathbb M\otimes_{\mathcal R}\mathbb M',\mathbb N)=
\Hom_{\mathcal R}(\mathbb M,\mathbb Hom_{\mathcal R}(\mathbb M',\mathbb N))\\ & 
\overset{\text{\ref{QHom}}}\hookrightarrow \Hom_{\mathcal R}(\mathbb M(R),\Hom_{\mathcal R}(\mathbb M',\mathbb N))
\overset{\text{\ref{QHom}}}\hookrightarrow \Hom_{\mathcal R}(\mathbb M(R),\Hom_{\mathcal R}(\mathbb M'(R),\mathbb N(R))) \\ & =\Hom_{\mathcal R}(\mathbb M(S)\otimes_{R}\mathbb M'(S),\mathbb N(S)).
\endaligned$$
is injective. Hence, $\mathbb M\otimes_{\mathcal R} \mathbb M'$ is  dually separated, by Theorem \ref{QHom}.

\end{proof}

\begin{lemma} \label{lemaD} An  $\mathcal R$-module $\mathbb M$ is  dually separated     iff the cokernel of every $\mathcal R$-module morphism from $\mathbb M$ to a quasi-coherent module is quasi-coherent, that is, the cokernel of any morphism
$f\colon \mathbb M\to \mathcal N$ is the quasi-coherent module associated with $\Coker f_R$.\end{lemma}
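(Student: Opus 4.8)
The plan is to derive both implications from Theorem \ref{W1}, which says that $\mathbb M$ is dually separated if and only if $f\mapsto f_R$ gives an injection $\Hom_{\mathcal R}(\mathbb M,\mathcal N)\hookrightarrow\Hom_R(\mathbb M(R),N)$ for every $R$-module $N$.

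For the implication from right to left, suppose that every morphism from $\mathbb M$ to a quasi-coherent module has quasi-coherent cokernel, and let $f\colon\mathbb M\to\mathcal N$ be a morphism with $f_R=0$. Then $\Coker f_R=N$, so by hypothesis $\Coker f=\mathcal N$; since the projection $\pi\colon\mathcal N\to\Coker f=\mathcal N$ is a morphism of quasi-coherent modules it is the morphism associated with $\pi_R=\mathrm{id}_N$, hence $\pi=\mathrm{id}_{\mathcal N}$, and $\pi\circ f=0$ forces $f=0$. Therefore $f\mapsto f_R$ is injective for every $N$, and $\mathbb M$ is dually separated by Theorem \ref{W1}.

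For the converse, assume $\mathbb M$ is dually separated and let $f\colon\mathbb M\to\mathcal N$ be a morphism to a quasi-coherent module. Write $P:=\Coker f_R$, let $q\colon\mathcal N\to\mathcal P$ be the morphism of quasi-coherent modules associated with the projection $N\to\Coker f_R$ (so $q$ is surjective), and let $\pi\colon\mathcal N\to C:=\Coker f$ be the cokernel projection. Since $C(R)=\Coker f_R=P$ we have $C_{qc}=\mathcal P$, and applying the naturality of the morphism $\mathbb M_{qc}\to\mathbb M$ of Proposition \ref{tercerb} to $\pi$ one gets that $\pi$ factors as $\pi=\psi\circ q$, where $\psi\colon\mathcal P=C_{qc}\to C$ is the canonical morphism; in particular $\psi$ is surjective because $\pi$ is. On the other hand $q\circ f\in\Hom_{\mathcal R}(\mathbb M,\mathcal P)$ satisfies $(q\circ f)_R=q_R\circ f_R=0$ by the definition of $P$, so dual separatedness together with Theorem \ref{W1} gives $q\circ f=0$; the universal property of $\pi=\Coker f$ then produces $\phi\colon C\to\mathcal P$ with $\phi\circ\pi=q$.

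It only remains to check that $\psi$ and $\phi$ are mutually inverse: from $\psi\circ\phi\circ\pi=\psi\circ q=\pi$ and the fact that $\pi$ is an epimorphism one gets $\psi\circ\phi=\mathrm{id}_C$, while from $\phi\circ\psi\circ q=\phi\circ\pi=q$ and the fact that $q$ is an epimorphism one gets $\phi\circ\psi=\mathrm{id}_{\mathcal P}$. Hence $\psi\colon\mathcal{(\Coker f_R)}\to\Coker f$ is an isomorphism compatible with the projections from $\mathcal N$, which is exactly the assertion. The single place where the hypothesis enters is the vanishing $q\circ f=0$, and that is where I expect the crux to lie; everything else is a formal chase with universal properties and with the natural transformation $\mathbb M_{qc}\to\mathbb M$, and once $q\circ f=0$ is known the conclusion is immediate.
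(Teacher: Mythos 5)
Your proposal is correct and follows essentially the same route as the paper: both directions reduce to Theorem \ref{W1}, and the crux of the forward implication is exactly the paper's observation that the composite of $f$ with the projection onto the quasi-coherent module associated with $\Coker f_R$ vanishes because it vanishes on $R$-points. Your extra verification that the two quotients are mutually inverse via the naturality of $C_{qc}\to C$ is just a more explicit rendering of the paper's remark that $\Coker f$ is a quotient of $\mathcal N'$ and conversely.
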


\begin{proof} $\Rightarrow)$ Let $f\colon \mathbb M\to \mathcal N$ be a morphism of $\mathcal R$-modules. Let $N':=\Coker f_R$. $\Coker f$ is a quotient $\mathcal R$-module  of $\mathcal N'$.
 Let $\pi\colon \mathcal N\to \mathcal N'$ be the natural epimorphism.  As $(\pi\circ f)_R=0$,   $\pi\circ f=0$ by Theorem \ref{W1}. Then, $\Coker f=\mathcal N'$.

$\Leftarrow)$ Let $f\colon \mathbb M\to \mathcal N$ be a morphism of $\mathcal R$-modules. If $f_R=0$ then $\Coker f=\mathcal N$ and $f=0$. Therefore,
$\mathbb M$ is  dually separated, by Theorem \ref{W1}.

\end{proof}

\begin{theorem} \label{4.3B} Let $\mathbb M$ be an $\mathcal R$-module. 
$\mathbb M$ is  dually separated   iff the natural morphism
$$\mathbb M^*(S)\to \Hom_{S}(\mathbb M(S), S), $$
is injective, for any commutative $R$-algebra $S$.
\end{theorem}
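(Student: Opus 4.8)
The plan is to relate the two conditions through Proposition \ref{adj2}, which identifies $\mathbb M^*(S)$ with $\Hom_{\mathcal S}(\mathbb M_{|S},\mathcal S)$, and Theorem \ref{W1}, which characterizes dual separatedness of an $\mathcal R$-module $\mathbb M$ by injectivity of $\Hom_{\mathcal R}(\mathbb M,\mathcal N)\to\Hom_R(\mathbb M(R),N)$ for all $R$-modules $N$. The target morphism $\mathbb M^*(S)\to\Hom_S(\mathbb M(S),S)$ is precisely the evaluation-at-$R$ map $f\mapsto f_R$ composed with the identification $\mathbb M^*(S)=\Hom_{\mathcal R}(\mathbb M,\mathcal S)$ (Proposition \ref{adj2}) on the source and $\mathbb M(S)=\mathbb M_{|S}(S)$, $\Hom_S(\mathbb M(S),S)=\Hom_S((\mathbb M_{|S})(S),S)$ on the target. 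So the statement amounts to: $\mathbb M$ is dually separated iff for every commutative $R$-algebra $S$ the $\mathcal S$-module $\mathbb M_{|S}$ satisfies the ``$N=S$'' case of the criterion in Theorem \ref{W1} (over the base ring $S$).

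For the forward direction ($\Rightarrow$), suppose $\mathbb M$ is dually separated. By Proposition \ref{QU}, $\mathbb M_{|S}$ is a dually separated $\mathcal S$-module for every commutative $R$-algebra $S$. Applying Theorem \ref{W1} to the $\mathcal S$-module $\mathbb M_{|S}$ with the $S$-module $N=S$ gives that $\Hom_{\mathcal S}(\mathbb M_{|S},\mathcal S)\to\Hom_S(\mathbb M_{|S}(S),S)=\Hom_S(\mathbb M(S),S)$ is injective. Rewriting the left-hand side via $\Hom_{\mathcal S}(\mathbb M_{|S},\mathcal S)=\mathbb M^*(S)$ (the ``in particular'' clause of Proposition \ref{adj2}) yields exactly the injectivity of $\mathbb M^*(S)\to\Hom_S(\mathbb M(S),S)$.

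For the reverse direction ($\Leftarrow$), take $S=R$: the hypothesis gives that $\mathbb M^*(R)=\Hom_{\mathcal R}(\mathbb M,\mathcal R)\to\Hom_R(\mathbb M(R),R)$ is injective, i.e.\ the natural map $\mathbb M^*\to{\mathbb M_{qc}}^*$ is a monomorphism on $R$-points. But we need it to be a monomorphism of $\mathcal R$-modules, equivalently injective on $S$-points for every $S$; this is precisely the injectivity of $\mathbb M^*(S)\to{\mathbb M_{qc}}^*(S)=\Hom_R(\mathbb M(R),S)$, which in turn factors as $\mathbb M^*(S)\hookrightarrow\Hom_S(\mathbb M(S),S)\to\Hom_R(\mathbb M(R),S)$, the first map injective by hypothesis and the second induced by the natural $\mathbb M(R)\to\mathbb M(S)$. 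So the composite is injective, and $\mathbb M^*\to{\mathbb M_{qc}}^*$ is a monomorphism, i.e.\ $\mathbb M$ is dually separated per Definition \ref{dualq}. The only point requiring a little care — and the main thing to get right — is the compatibility of all these canonical identifications (that the map $\mathbb M^*(S)\to\Hom_S(\mathbb M(S),S)$ named in the statement really is $f\mapsto f_S$ under Proposition \ref{adj2}, and that it sits in the claimed factorization of $\mathbb M^*(S)\to{\mathbb M_{qc}}^*(S)$); once the diagrams are written down this is routine, so no serious obstacle is expected.
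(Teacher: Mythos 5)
Your forward direction is correct and coincides with the paper's: restrict to $S$, apply Proposition \ref{QU}, and read off dual separatedness of the $\mathcal S$-module $\mathbb M_{|S}$ at its $S$-points, using Proposition \ref{adj2} to identify $\mathbb M^*(S)$ with $\Hom_{\mathcal S}(\mathbb M_{|S},\mathcal S)$.

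The reverse direction has a genuine gap. You reduce correctly to proving that $\mathbb M^*(S)\to{\mathbb M_{qc}}^*(S)=\Hom_R(\mathbb M(R),S)$ is injective for every $S$, and you correctly factor this map as $\mathbb M^*(S)\hookrightarrow \Hom_S(\mathbb M(S),S)\to \Hom_R(\mathbb M(R),S)$, the second arrow being precomposition with the natural map $\mathbb M(R)\to\mathbb M(S)$. But you then assert that the composite is injective because its first factor is. That inference is invalid: for a composite $g\circ h$ to be injective you need $g$ to be injective on the image of $h$, and here $g$ is a restriction map that kills any nonzero $S$-linear form on $\mathbb M(S)$ vanishing on the image of $\mathbb M(R)\otimes_R S$. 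Showing that this cannot happen for forms coming from $\mathbb M^*(S)$ is precisely the content of the implication, and your argument supplies nothing for it: the hypothesis controls injectivity into $\Hom_S(\mathbb M(S),S)$, which does not map injectively onward to $\Hom_R(\mathbb M(R),S)$. The paper closes exactly this gap with a different device: it verifies the criterion of Theorem \ref{W1} by taking an arbitrary $R$-module $N$, forming the square-zero extension $S=R\oplus N$, transporting a morphism $w\colon \mathbb M\to\mathcal N$ with $w_R=0$ to an element of $\mathbb M^*(S)$ via Proposition \ref{adj2} and the inclusion $N\subset S$, and computing through a diagram that its image in $\Hom_S(\mathbb M(S),S)$ equals $w_R\circ\pi_{1,*}=0$, so that the hypothesis applied to this particular algebra $S$ forces $w=0$. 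Some substitute for this square-zero-extension step is required; without it the reverse implication is not proved. (Also, your opening move ``take $S=R$'' plays no role in what follows.)
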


\begin{proof} $\Rightarrow)$ $\mathbb M^*(R)\to \Hom_{R}(\mathbb M( \mathcal R), R) $ is injective because $\mathbb M$ is dually separated.  $\mathbb M_{|S}$  is dually separated, by Proposition \ref{QU}. Then, the morphism
$$\mathbb M^*(S)={\mathbb M_{|S}}^*(S)\to \Hom_{S}({\mathbb M_{|S}}(S), S)= \Hom_{S}(\mathbb M(S), S), $$
is injective.

$\Leftarrow)$ 
Let $N$ be an $R$-module. Consider the commutative $R$-algebra $S=R\oplus N$ ($(r,n)\cdot (r',n'):=(rr',rn'+r'n)$), the morphism $\pi_1\colon  S\to R$, $\pi_1(r,n)=r$, the obvious morphism
$\pi_{1,*}\colon \mathbb M(S)\to \mathbb M(R)$ and the induced morphism $$\pi_{1,N}^*\colon \Hom_R(\mathbb M(R), N)\to
\Hom_S(\mathbb M(S), N), \, \pi_{1,N}^*(v)=v\circ \pi_{1,*}.$$ Let $\pi\colon \mathcal N_{|S}\to \mathcal N$ be defined by $\pi_T(n\otimes_R t):=n\otimes_S t$, for any commutative $S$-algebra $T$ and
$n\otimes_R t\in N\otimes_R T$. The diagram 
$$\xymatrix{\Hom_{\mathcal R}(\mathbb M,\mathcal N) \ar@{=}[d]^-{\text{\ref{adj2}}} \ar[r] & \Hom_R(\mathbb M(R), N) \ar[d]_-{\pi_{1,N}^*} \\ \Hom_{\mathcal S}({\mathbb M}_{|S},\mathcal N) \ar[r] & \Hom_S(\mathbb M(S), N)} \xymatrix{ w \ar@{|->}[r]  \ar@{|->}[d] & w_R \ar@{|-->}[d]_-{\pi_{1,N}^*}\\ \pi\circ w_{|S} \ar@{|->}[r] & \pi_S\circ w_S}$$
is commutative, because the diagram
$$\xymatrix{ \mathbb M(R) \ar[rr]^-{w_R} \ar[d] && N \ar[d] \ar@{=}[rd]  & \\
\mathbb M(S) \ar[rr]^-{w_S} \ar[d]^-{\pi_{1,*}} \ar@{-->}[rrd]^-{\pi_{1,N}^*(w_R)} & & N\otimes_RS \ar[d]\ar[r]^-{\pi_S} & N\otimes_SS=N\\
 \mathbb M(R) \ar[rr]^-{w_R}  & &  N  \ar@{=}[ru] &  }$$
is commutative, therefore $\pi_{1,N}^*(w_R)=w_R\circ \pi_{1,*}=\pi_S\circ w_S$. The diagram 
$$\xymatrix{ & \Hom_{\mathcal R}(\mathbb M,\mathcal N)  \ar[r] \ar@{=}[d]^-{\text{\ref{adj2}}} & \Hom_R(\mathbb M(R),N) \ar[d]_-{\pi_{1,N}^*}
\\ & \Hom_{\mathcal S}({\mathbb M}_{|S},\mathcal N) \ar@{^{(}->}[d] \ar[r] & \Hom_S(\mathbb M(S), N) \ar@{^{(}->}[d] \\
\mathbb M^{*}(S) \ar@{=}[r] &
\Hom_{\mathcal S}({\mathbb M}_{|S},\mathcal S)\ar@{^{(}->}[r]  & \Hom_{S}(\mathbb M(S), S)}$$
is commutative, then the morphism  $\Hom_{\mathcal R}(\mathbb M,\mathcal N)  \to \Hom_R(\mathbb M(R),N)$ is injective. By Theorem  \ref{5.1},
$\mathbb M$   is   dually separated.

\end{proof}

\begin{theorem} \label{3.14} Let $R=K$ be a field.  A $\mathcal K$-module, $\mathbb M$, is  dually separated    iff for every quasi-coherent $\mathcal K$-module $\mathcal N$, the image of every morphism $f\colon \mathbb M\to \mathcal N$ is a quasi-coherent $\mathcal K$-module.\end{theorem}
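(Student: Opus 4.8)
The plan is to prove the two implications separately, using Lemma~\ref{lemaD} (dually separated $\Leftrightarrow$ cokernels into quasi-coherent modules are quasi-coherent) as the main bridge, and exploiting that over a field every $K$-module is free so that the quasi-coherent modules $\mathcal N$ we work with are of the form $\oplus_I\mathcal K$ or $\prod_I \mathcal K$-type objects. First I would handle the easy direction: suppose the image of every morphism $f\colon\mathbb M\to\mathcal N$ into a quasi-coherent $\mathcal K$-module is quasi-coherent. Given such $f$, form the exact sequence $0\to\Ima f\to\mathcal N\to\Coker f\to 0$. Since $\Ima f=\mathcal I$ for some $K$-subspace $I\subseteq N$ (here I use that over a field a quasi-coherent submodule corresponds to an honest subspace), the quotient $\mathcal N/\mathcal I=\mathcal{N/I}$ is itself quasi-coherent, and one checks $\Coker f=\mathcal{N/I}=\mathcal{N}/\overline{\Ima f_R}$. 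Then $\Coker f$ is the quasi-coherent module associated with $\Coker f_R$, so by Lemma~\ref{lemaD} $\mathbb M$ is dually separated.

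For the converse, assume $\mathbb M$ is dually separated and let $f\colon\mathbb M\to\mathcal N$ be arbitrary with $\mathcal N$ quasi-coherent. Set $N':=\Ima f_R\subseteq N$, which over a field $K$ is a direct summand, say $N=N'\oplus N''$. Let $g\colon\mathbb M\to\mathcal N''$ be $f$ composed with the projection $\mathcal N\to\mathcal N''$; then $g_R=0$, so by Theorem~\ref{W1} (dually separated $\Leftrightarrow$ $f\mapsto f_R$ injective) we get $g=0$. Hence $f$ factors through $\mathcal N'\hookrightarrow\mathcal N$, i.e. $f$ comes from a morphism $f'\colon\mathbb M\to\mathcal N'$ with $f'_R\colon M\to N'$ surjective. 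Now $\Coker f'$ is quasi-coherent by Lemma~\ref{lemaD}, and it equals the quasi-coherent module associated with $\Coker f'_R=0$, so $\Coker f'=0$, i.e. $f'$ is an epimorphism of $\mathcal R$-modules and $\Ima f=\Ima f'=\mathcal N'$ is quasi-coherent.

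The step I expect to be the main obstacle is justifying, at the level of the category of $\mathcal K$-modules rather than ordinary $K$-vector spaces, that $\Ima f$ being quasi-coherent (which the statement asserts) and $\Coker f$ being quasi-coherent (which Lemma~\ref{lemaD} delivers) are genuinely interchangeable via the short exact sequence $0\to\Ima f\to\mathcal N\to\Coker f\to 0$; concretely, one must verify that $\mathcal N\to\Coker f$ being the quasi-coherent morphism $\mathcal{N}\to\mathcal{N/N'}$ forces $\Ima f=\mathcal{N'}$, and conversely. Since over a field every short exact sequence of vector spaces splits and the functor $M\mapsto\mathcal M$ is exact and fully faithful, this is where the hypothesis $R=K$ is essential, and it is exactly the place where one should be careful rather than wave hands. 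Everything else is a direct application of Lemma~\ref{lemaD} and Theorem~\ref{W1}.
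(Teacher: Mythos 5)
Your proof is correct and follows essentially the same route as the paper: both reduce the statement to Lemma \ref{lemaD} by showing that, over a field, ``$\Ima f$ quasi-coherent'' and ``$\Coker f$ quasi-coherent'' are interchangeable (the paper does this tersely via the fact that kernels of morphisms between quasi-coherent $\mathcal K$-modules are quasi-coherent, you do it by splitting $N=\Ima f_R\oplus N''$ and applying Theorem \ref{W1}). Your more explicit handling of the harder direction is sound and fills in exactly the step the paper leaves implicit.
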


\begin{proof} The kernel of every morphism between quasi-coherent $\mathcal K$-modules is quasi-coherent. Then, the cokernel of a morphism $f\colon \mathbb M\to \mathcal N$ is quasi-coherent iff $\Ima f$ is quasi-coherent. This theorem is a consequence of Lemma \ref{lemaD}.

\end{proof}

%
%
%

\begin{lemma} \cite[1.28]{Pedro2} \label{L5.11} It holds that $$\Hom_{\mathcal R}(\mathcal N^*,\ilim{i} \mathcal M^*_i)=
\ilim{i}\Hom_{\mathcal R}(\mathcal N^*, \mathcal M^*_i).$$

\end{lemma}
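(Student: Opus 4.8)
The plan is to prove the equality by showing that the canonical map
$\ilim{i}\Hom_{\mathcal R}(\mathcal N^*,\mathcal M_i^*)\to \Hom_{\mathcal R}(\mathcal N^*,\ilim{i}\mathcal M_i^*)$, obtained by composing a morphism $\mathcal N^*\to\mathcal M_i^*$ with the structural morphism $\mathcal M_i^*\to\ilim{i}\mathcal M_i^*$, is bijective. First I would put the source of this map into purely quasi-coherent terms: by Proposition \ref{trivial} together with reflexivity (Theorem \ref{reflex}) one has $\Hom_{\mathcal R}(\mathcal N^*,\mathcal M_i^*)=\Hom_{\mathcal R}(\mathcal M_i,\mathcal N^{**})=\Hom_{\mathcal R}(\mathcal M_i,\mathcal N)=\Hom_R(M_i,N)$, the last step being the equivalence between $R$-modules and quasi-coherent $\mathcal R$-modules. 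Thus the right-hand side $\ilim{i}\Hom_{\mathcal R}(\mathcal N^*,\mathcal M_i^*)$ becomes $\ilim{i}\Hom_R(M_i,N)$, and the whole lemma amounts to saying that $\Hom_{\mathcal R}(\mathcal N^*,-)$ sends the filtered colimit $\ilim{i}\mathcal M_i^*$ to $\ilim{i}\Hom_R(M_i,N)$.

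The tractable case is $N$ finite free, say $N=R^n$. Then $\mathcal N^*=\mathcal R^n$ is quasi-coherent, so Proposition \ref{tercer} gives $\Hom_{\mathcal R}(\mathcal R^n,\mathbb Q)=\mathbb Q(R)^n$ for every $\mathcal R$-module $\mathbb Q$. Taking $\mathbb Q=\ilim{i}\mathcal M_i^*$ and using that evaluation at $R$ commutes with direct limits, one gets $\mathbb Q(R)=\ilim{i}\mathcal M_i^*(R)=\ilim{i}M_i^*$, whence $\Hom_{\mathcal R}(\mathcal R^n,\ilim{i}\mathcal M_i^*)=(\ilim{i}M_i^*)^n=\ilim{i}(M_i^*)^n=\ilim{i}\Hom_R(M_i,R^n)$, because finite products commute with filtered colimits. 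This disposes of the finite free case with no difficulty and gives the isomorphism directly from Proposition \ref{tercer}.

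For general $N$ I would pass to a free presentation $\oplus_B R\to \oplus_A R\to N\to 0$; applying the left exact contravariant functor $\mathbb Hom_{\mathcal R}(-,\mathcal R)$ to the associated right exact sequence of quasi-coherent modules exhibits $\mathcal N^*$ as $\Ker(\prod_A\mathcal R\to\prod_B\mathcal R)$, so that a diagram chase would reduce the statement for $N$ to the statement for the free modules $\oplus_A R$ and $\oplus_B R$. The main obstacle is exactly the free case of infinite rank: for $N=\oplus_A R$ the functor $\mathcal N^*=\prod_A\mathcal R$ is a genuine product and is not quasi-coherent, so evaluation at $R$ no longer determines the morphisms out of it and the clean argument of the finite case fails. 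Overcoming this requires the essential finiteness property of the module scheme $\mathcal N^*$ relative to filtered colimits of module schemes, namely that any morphism $\mathcal N^*\to\ilim{i}\mathcal M_i^*$ factors, essentially uniquely, through a single stage $\mathcal M_{i_0}^*$; this compactness is the substantive content of the lemma and the point at which the machinery of \cite{Pedro2} must be invoked, everything else being formal manipulation with Propositions \ref{trivial}, \ref{tercer} and Theorem \ref{reflex}.
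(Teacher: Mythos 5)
Your reduction of each term via Proposition \ref{trivial} and Theorem \ref{reflex} to $\Hom_{\mathcal R}(\mathcal N^*,\mathcal M_i^*)=\Hom_R(M_i,N)$ is correct, and your finite free case is fine; but the proof has two genuine failures. First, the reduction step does not work: dualizing a presentation $\oplus_B R\to\oplus_A R\to N\to 0$ exhibits $\mathcal N^*$ as a \emph{kernel}, $0\to\mathcal N^*\to\prod_A\mathcal R\to\prod_B\mathcal R$, and there is no exact sequence $\Hom_{\mathcal R}(\prod_B\mathcal R,\mathbb X)\to\Hom_{\mathcal R}(\prod_A\mathcal R,\mathbb X)\to\Hom_{\mathcal R}(\mathcal N^*,\mathbb X)\to 0$ to chase: $\Hom_{\mathcal R}(-,\mathbb X)$ is only left exact, so morphisms out of a kernel are not controlled by morphisms out of the ambient products (restriction $\Hom(\prod_A\mathcal R,\mathbb X)\to\Hom(\mathcal N^*,\mathbb X)$ is in general neither injective nor surjective). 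So even if you had the free case in arbitrary rank, the general case would not follow. Second, and decisively, you explicitly defer the crux --- that any morphism $\mathcal N^*\to\ilim{i}\mathcal M_i^*$ factors essentially uniquely through a single stage --- to ``the machinery of \cite{Pedro2}''. But the statement you are asked to prove \emph{is} \cite[1.28]{Pedro2} (the present paper states it with that citation and no proof), so this appeal is circular: the entire substantive content of the lemma is left unproven.

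The missing idea is concrete and available in the paper's own toolkit: $\mathcal N^*$ is \emph{representable}. By Proposition \ref{adj2}, $\mathcal N^*(S)=\Hom_{\mathcal R}(\mathcal N,\mathcal S)=\Hom_R(N,S)=\Hom_{R\text{-alg}}(S^{\bullet}N,S)$, i.e.\ $\mathcal N^*$ is the functor of points of the affine scheme $\Spec S^{\bullet}N$ (this is why it is called a module scheme). Yoneda then gives $\Hom_{\text{func}}(\mathcal N^*,\mathbb X)=\mathbb X(S^{\bullet}N)$ for morphisms of set-valued functors, and since direct limits of $\mathcal R$-modules are computed objectwise, $\Hom_{\text{func}}(\mathcal N^*,\ilim{i}\mathcal M_i^*)=\ilim{i}\mathcal M_i^*(S^{\bullet}N)=\ilim{i}\Hom_{\text{func}}(\mathcal N^*,\mathcal M_i^*)$: every functor morphism factors through some stage, and a morphism dying in the colimit dies at a finite stage. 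A short supplementary check handles linearity: the defects $g(x+y)-g(x)-g(y)$ and $g(sx)-sg(x)$ of a stage-$i_0$ factorization of an $\mathcal R$-linear $f$ are detected by finitely many universal elements (values at algebras such as $S^{\bullet}(N\oplus N)$) that vanish in the colimit, hence vanish at some $j\geq i_0$, so the factorization can be taken $\mathcal R$-linear. This proves bijectivity of the canonical map for arbitrary $N$ directly, with no presentation and no separate free case --- which is also why your instinct that the ``compactness'' of $\mathcal N^*$ is the real content is right, but it holds for all $N$ at once and must be proved, not cited.
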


\begin{theorem} \label{P2}  Let $R=K$ be a field.
Let $\mathbb M$ be a $\mathcal K$-module and let $\{\mathcal N_i\}_{i\in I}$ be the family of  all
the quasi-coherent quotient modules of $\mathbb M$. 
Then, $\mathbb M$ is  dually separated     iff $I$ is a downward directed set (in the obvious way) and $\mathbb M^*=\ilim{i\in I}  \mathcal N_i^*$.

\end{theorem}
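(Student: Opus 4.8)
The plan is to prove the two implications separately: the reverse one will run on Lemma~\ref{L5.11}, and the forward one on Theorem~\ref{3.14} (available since $R=K$ is a field). Throughout I will write $p_i\colon\mathbb M\twoheadrightarrow\mathcal N_i$ for the projection onto the $i$-th quasi-coherent quotient, and order $I$ ``in the obvious way'': $\mathcal N_i\le\mathcal N_j$ iff $p_j$ factors through $p_i$, equivalently iff there is an (automatically epimorphic) morphism $\mathcal N_i\to\mathcal N_j$ over $\mathbb M$. Dualizing such a morphism gives a monomorphism $\mathcal N_j^*\hookrightarrow\mathcal N_i^*$, and in the same way each $p_i^*\colon\mathcal N_i^*\to\mathbb M^*$ is a monomorphism (precomposition with an epimorphism is injective), so I view the $\mathcal N_i^*$ as $\mathcal R$-submodules of $\mathbb M^*$, with $\ilim{i\in I}\mathcal N_i^*$ meaning $\bigcup_i\mathcal N_i^*$. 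Note the order on $I$ is opposite to the inclusion order of these submodules — a bookkeeping point I will have to keep straight.

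For $(\Leftarrow)$: assuming $I$ is downward directed and $\mathbb M^*=\ilim{i\in I}\mathcal N_i^*$, I would take an arbitrary morphism $f\colon\mathbb M\to\mathcal V$ with $\mathcal V$ quasi-coherent and rewrite, using Theorem~\ref{reflex}, Proposition~\ref{trivial}, the hypothesis, and Lemma~\ref{L5.11} (legitimate because each $\mathcal N_i^*$ is a module scheme),
$$\aligned \Hom_{\mathcal R}(\mathbb M,\mathcal V)&=\Hom_{\mathcal R}(\mathbb M,\mathcal V^{**})=\Hom_{\mathcal R}(\mathcal V^*,\mathbb M^*)\\ &=\Hom_{\mathcal R}\big(\mathcal V^*,\ilim{i}\mathcal N_i^*\big)=\ilim{i}\Hom_{\mathcal R}(\mathcal V^*,\mathcal N_i^*).\endaligned$$
Hence $f$ lies in the image of some $\Hom_{\mathcal R}(\mathcal V^*,\mathcal N_i^*)=\Hom_{\mathcal R}(\mathcal N_i,\mathcal V)$; that is, $f=h\circ p_i$ for some $h\colon\mathcal N_i\to\mathcal V$. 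Since $p_i$ is an epimorphism, $\Coker f=\Coker h$, which is quasi-coherent as $h$ is a morphism of quasi-coherent modules; so $\mathbb M$ is dually separated by Lemma~\ref{lemaD}. This direction uses neither that $R$ is a field nor the directedness of $I$ beyond what is needed for $\ilim{i\in I}\mathcal N_i^*$ to be meaningful.

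For $(\Rightarrow)$: assuming $\mathbb M$ dually separated, I would first check $I$ is downward directed. Given $i,j\in I$, the module $\mathcal N_i\times\mathcal N_j$ is quasi-coherent (it is associated with the $K$-vector space $N_i\oplus N_j$), so by Theorem~\ref{3.14} the image of $(p_i,p_j)\colon\mathbb M\to\mathcal N_i\times\mathcal N_j$ is a quasi-coherent quotient $\mathcal N_k$ of $\mathbb M$; composing with the two coordinate projections shows $p_i$ and $p_j$ factor through $\mathbb M\twoheadrightarrow\mathcal N_k$, so $\mathcal N_k\le\mathcal N_i$ and $\mathcal N_k\le\mathcal N_j$. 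Then I would show $\bigcup_i\mathcal N_i^*=\mathbb M^*$: for any commutative $R$-algebra $S$, Proposition~\ref{adj2} identifies $\mathbb M^*(S)=\Hom_{\mathcal R}(\mathbb M,\mathcal S)$; given $w$ there, Theorem~\ref{3.14} makes $\Ima w$ a quasi-coherent quotient $\mathcal N_j$ of $\mathbb M$, so $w$ factors as $\mathbb M\xrightarrow{p_j}\mathcal N_j\hookrightarrow\mathcal S$ with the inclusion lying in $\Hom_{\mathcal R}(\mathcal N_j,\mathcal S)=\mathcal N_j^*(S)$ (Proposition~\ref{adj2}) and mapping to $w$ under $p_j^*$; thus $w\in\mathcal N_j^*(S)$. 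As $S$ is arbitrary and $\{\mathcal N_i^*\}$ is directed, $\mathbb M^*=\bigcup_i\mathcal N_i^*=\ilim{i\in I}\mathcal N_i^*$.

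The substantive ingredients are exactly Lemma~\ref{L5.11} (for $\Leftarrow$) and Theorem~\ref{3.14} (for $\Rightarrow$); everything else is a chase through the natural identifications of Propositions~\ref{trivial}, \ref{adj2} and Theorem~\ref{reflex}. The point I expect to be the real obstacle — and the step most likely to hide a slip — is verifying that these identifications are compatible, so that ``$f$ corresponds to an element in the image of $\Hom_{\mathcal R}(\mathcal V^*,\mathcal N_i^*)$'' genuinely translates into ``$f=h\circ p_i$ factors through the quasi-coherent quotient $\mathcal N_i$''; concretely this amounts to checking $\widetilde{h\circ p_i}=p_i^*\circ\tilde h$ for the pairing of Proposition~\ref{trivial}, a short but fiddly computation, and similarly checking that $p_j^*$ corresponds to precomposition with $p_j$ under the identification of Proposition~\ref{adj2}.
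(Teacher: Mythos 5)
Your proposal is correct and follows essentially the same route as the paper: the forward implication rests on Theorem \ref{3.14} (directedness via the image of $\mathbb M\to\mathcal N_i\times\mathcal N_j$, and surjectivity of $\ilim{i}\mathcal N_i^*(S)\to\mathbb M^*(S)$ by factoring each $w\in\Hom_{\mathcal R}(\mathbb M,\mathcal S)$ through its quasi-coherent image), and the reverse implication on the chain of identifications through Proposition \ref{trivial} and Lemma \ref{L5.11} followed by Lemma \ref{lemaD}. The compatibility checks you flag as the fiddly point are left implicit in the paper as well, but your reading of them is right.
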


\begin{proof} $\Rightarrow)$  $I$ is a set because it is a subset of the set of quotient $K$-modules of $\mathbb M(K)$, by \ref{W1}. Given two quotient $\mathcal K$-modules $\mathbb M\to\mathcal N_1,\mathcal N_2$,  the image, $\mathcal N_3$, of the obvious morphism $\mathbb M\to \mathcal N_1\times \mathcal N_2$ is a quotient   $\mathcal K$-module of  $\mathbb M$ and $\mathcal N_3\leq \mathcal N_1,\mathcal N_2$. Therefore, $I$ is a downward directed set.
Let $S$ be a commutative $K$-algebra,
the morphism $$\ilim{i\in I}  \mathcal N_i^*(S)\to \Hom_{\mathcal K}(\mathbb M,\mathcal S)\overset{\text{\ref{adj2}}}=\mathbb M^*(S)$$ is obviously injective, and it is surjective by  Theorem \ref{3.14}. Hence, $\mathbb M^*=\ilim{i\in I}  \mathcal N_i^*$.

$\Leftarrow)$ Observe that
$$\aligned \Hom_{\mathcal K}(\mathbb M,\mathcal N) & \overset{\text{\ref{trivial}}}=\Hom_{\mathcal K}(\mathcal N^*,\mathbb M^*)=\Hom_{\mathcal K}(\mathcal N^*,\ilim{i} \mathcal N_i^*)\overset{\text{\ref{L5.11}}}=\ilim{i}\Hom_{\mathcal K}(\mathcal N^*,\mathcal N_i^*)\\ & \overset{\text{\ref{trivial}}} =\ilim{i}\Hom_{\mathcal K}(\mathcal N_i,\mathcal N).\endaligned$$
Then, every morphism $\mathbb M\to \mathcal N$ factors through some $\mathcal N_i$ and then its cokernel is a quasi-coherent module. By Lemma \ref{lemaD}, $\mathbb M$ is  dually separated.

\end{proof}

\begin{corollary} \label{QHom2}Let  $R=K$ be a field.
 If $\mathbb M$ is  dually separated, then $\mathbb M^*$ is  dually separated.\end{corollary}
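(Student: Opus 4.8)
The plan is to deduce this directly from the structure theorem for dually separated $\mathcal K$-modules together with the stability of dual separatedness under direct limits. First I would invoke Theorem~\ref{P2}: since $K$ is a field and $\mathbb M$ is dually separated, we have $\mathbb M^*=\ilim{i\in I}\mathcal N_i^*$, where $\{\mathcal N_i\}_{i\in I}$ is the (downward directed) family of all quasi-coherent quotient $\mathcal K$-modules of $\mathbb M$; dualizing the surjections among the $\mathcal N_i$ exhibits $\{\mathcal N_i^*\}_{i\in I}$ as a direct system of $\mathcal R$-module schemes with direct limit $\mathbb M^*$.

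Next I would check that each $\mathcal N_i^*$ is dually separated. Each $\mathcal N_i$ is the quasi-coherent $\mathcal K$-module associated with a $K$-vector space $N_i$, and over a field every vector space is free, $N_i\simeq\oplus_{J_i}K$. Hence Example~\ref{3.3.3} applies verbatim and shows that $\mathcal N_i^*$ is dually separated.

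Finally, writing $\mathbb M^*$ as the direct limit $\ilim{i\in I}\mathcal N_i^*$ of dually separated $\mathcal K$-modules, Proposition~\ref{3.3.4} (the direct limit of a direct system of dually separated modules is dually separated) gives at once that $\mathbb M^*$ is dually separated.

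There is essentially no obstacle here: the only points needing routine attention are that the contravariant functor $\mathcal N\mapsto\mathcal N^*$ turns the poset of quasi-coherent quotients of $\mathbb M$ into a genuine directed system (already recorded in the proof of Theorem~\ref{P2}), and that Example~\ref{3.3.3} covers every $\mathcal N_i$, which is automatic because $K$ is a field, so each $N_i$ is free.
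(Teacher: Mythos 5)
Your proof is correct and follows exactly the paper's own route: the paper's one-line proof cites precisely Theorem~\ref{P2}, Example~\ref{3.3.3} and Proposition~\ref{3.3.4}, and you have simply spelled out how those three ingredients combine. No issues.
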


\begin{proof} It is a consequence of Theorem \ref{P2}, Example \ref{3.3.3} and Proposition \ref{3.3.4}.\end{proof}

\section{Reflexive $\mathcal R$-modules}

\begin{proposition} \label{dualq3} Let $\mathbb M$ be a reflexive $\mathcal R$-module. $\mathbb M$ is  dually separated    
iff there exist a subset $I$ and  a monomorphism $\mathbb M^*\hookrightarrow \prod^I\mathcal R$.
\end{proposition}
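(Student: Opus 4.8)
The plan is to prove the two implications separately; only ``$\Leftarrow$'' will use reflexivity.

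For ``$\Rightarrow$'', I would assume $\mathbb M$ is dually separated, so that by Definition \ref{dualq} the natural morphism $\mathbb M^*\to\mathbb M_{qc}^*$ is a monomorphism. Then I would choose a set $I$ together with an epimorphism of $R$-modules $\oplus_I R\twoheadrightarrow\mathbb M(R)$ (for instance $I=\mathbb M(R)$) and pass to the associated quasi-coherent modules to obtain an epimorphism $p\colon\oplus_I\mathcal R\twoheadrightarrow\mathbb M_{qc}$ of $\mathcal R$-modules. A morphism $g\colon\mathbb M_{qc}\to\mathcal S$ with $g\circ p=0$ must vanish since $p$ is an epimorphism, so the map $\mathbb M_{qc}^*(S)\to(\oplus_I\mathcal R)^*(S)$ is injective for every $S$; thus dualizing $p$ gives a monomorphism $\mathbb M_{qc}^*\hookrightarrow(\oplus_I\mathcal R)^*=\prod^I\mathcal R$, the last equality because $(\oplus_I\mathcal R)^*(S)=\Hom_S(\oplus_I S,S)=\prod_I S$. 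Composing with $\mathbb M^*\hookrightarrow\mathbb M_{qc}^*$ yields the desired monomorphism $\mathbb M^*\hookrightarrow\prod^I\mathcal R$. Reflexivity of $\mathbb M$ is not needed for this direction.

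For ``$\Leftarrow$'', I would assume $\psi\colon\mathbb M^*\hookrightarrow\prod^I\mathcal R=(\oplus_I\mathcal R)^*$ is a monomorphism and set $F:=\oplus_I R$. By Proposition \ref{trivial} together with the reflexivity $\mathbb M^{**}=\mathbb M$, the morphism $\psi$ corresponds to a morphism $\phi\colon\oplus_I\mathcal R\to\mathbb M$, which by Proposition \ref{tercer} is the one attached to $\phi_R\colon F\to\mathbb M(R)$. For a fixed $R$-module $N$, I would apply the left-exact functor $\Hom_{\mathcal R}(\mathcal N^*,-)$ to $\psi$ and combine it with Propositions \ref{trivial} and \ref{tercer} and Theorem \ref{reflex} (the latter applied to the reflexive quasi-coherent modules $\mathcal N$ and $\oplus_I\mathcal R$) to produce an injection
$$\Hom_{\mathcal R}(\mathbb M,\mathcal N)\overset{\text{\ref{trivial}}}=\Hom_{\mathcal R}(\mathcal N^*,\mathbb M^*)\hookrightarrow\Hom_{\mathcal R}(\mathcal N^*,(\oplus_I\mathcal R)^*)\overset{\text{\ref{trivial}}}=\Hom_{\mathcal R}(\oplus_I\mathcal R,\mathcal N)\overset{\text{\ref{tercer}}}=\Hom_R(F,N).$$
The point is that this composite is $f\mapsto f_R\circ\phi_R$; granting that, $f_R=0$ forces $f=0$, so $\Hom_{\mathcal R}(\mathbb M,\mathcal N)\to\Hom_R(\mathbb M(R),N)$, $f\mapsto f_R$, is injective for every $R$-module $N$, and hence $\mathbb M$ is dually separated by Theorem \ref{W1}.

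The step I expect to be the main, if routine, obstacle is the identification of the displayed composite with $f\mapsto f_R\circ\phi_R$: one has to trace a morphism $f\colon\mathbb M\to\mathcal N$ through the two transposition isomorphisms of Proposition \ref{trivial}, invoking reflexivity of $\mathcal N$ at each step, to see that it lands on $f\circ\phi\colon\oplus_I\mathcal R\to\mathcal N$, and then pass to $f_R\circ\phi_R$ via Proposition \ref{tercer}. Should this bookkeeping prove cumbersome, a concrete alternative goes through Theorem \ref{4.3B}: unwinding Proposition \ref{trivial} shows that $\psi_S\colon\mathbb M^*(S)\to\prod_I S$ sends $w$ to $(w_S(\phi_S(e_i)))_{i\in I}$, so if $w$ has zero image in $\Hom_S(\mathbb M(S),S)$ then $\psi_S(w)=0$ and hence $w=0$; this gives injectivity of $\mathbb M^*(S)\to\Hom_S(\mathbb M(S),S)$ for every commutative $R$-algebra $S$, and Theorem \ref{4.3B} concludes.
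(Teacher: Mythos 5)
Your proof is correct. The forward direction is exactly the paper's: choose an epimorphism $\oplus_I R\to\mathbb M(R)$, dualize to get $\mathbb M_{qc}^*\hookrightarrow\prod^I\mathcal R$, and compose with $\mathbb M^*\hookrightarrow\mathbb M_{qc}^*$; as you note, reflexivity plays no role there. The backward direction differs in where the work is done. The paper's argument is shorter and stays at the level of the definition: the morphism $\phi\colon\oplus^I\mathcal R\to\mathbb M$ obtained from $\psi$ by Proposition \ref{trivial} and reflexivity automatically factors as $\oplus^I\mathcal R\to\mathbb M_{qc}\to\mathbb M$ by Proposition \ref{tercerb} (its source is quasi-coherent), and dualizing this factorization exhibits $\psi$ as $\mathbb M^*\to\mathbb M_{qc}^*\to\prod^I\mathcal R$, whence $\mathbb M^*\to\mathbb M_{qc}^*$ is a monomorphism --- which is Definition \ref{dualq} verbatim. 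You instead verify the criterion of Theorem \ref{W1} (or \ref{4.3B}), which forces you to identify the composite $\Hom_{\mathcal R}(\mathbb M,\mathcal N)\hookrightarrow\Hom_R(F,N)$ with $f\mapsto f_R\circ\phi_R$; that identification is correct (tracing $f$ through Proposition \ref{trivial} sends it to the dual morphism $w\mapsto w\circ f$, then to $f\circ\phi$, then to $f_R\circ\phi_R$ by Proposition \ref{tercer}), but it is precisely the bookkeeping that the paper's factorization trick sidesteps. Your fallback via Theorem \ref{4.3B} --- observing that $\psi_S(w)$ depends only on $w_S$, namely $\psi_S(w)=(w_S(\phi_S(e_i)))_i$ --- is also valid and is arguably the cleanest concrete version of the same idea. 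Either way the proof goes through; the paper's route just saves you the diagram chase by exploiting Proposition \ref{tercerb} at the outset.
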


\begin{proof} Let $\mathbb M$ be  dually separated.
Consider an epimorphism
$\oplus^IR\to \mathbb M(R)$. The composite morphism $\mathbb M^*\hookrightarrow {\mathbb M_{qc}}^*\hookrightarrow \prod^I\mathcal R$ is  a monomorphism.

Now, let  $\mathbb M^*\hookrightarrow  \prod^I\mathcal R$ be   a monomorphism. The dual morphism $ \oplus^I\mathcal R\to \mathbb M$, factors as follows: $ \oplus^I\mathcal R\to {\mathbb M_{qc}}\to \mathbb M$. Dually, we have $\mathbb M^*\to {\mathbb  M_{qc}}^*\to  \prod^I\mathcal R$. Therefore, the morphism $\mathbb M^*\to {\mathbb  M_{qc}}^*$ is  a monomorphism and  $\mathbb M$  is  dually separated.

\end{proof}

\begin{definition} An $\mathcal R$-module $\mathbb M$ is said to be (linearly) separated if
for each commutative $R$-algebra  $S$ and  $m\in\mathbb M(S)$ there exist a commutative $S$-algebra $T$ and a $w\colon \mathbb M\to \mathcal T$ such that $w(m)\neq 0$ (that is,
the natural morphism
$\mathbb M\to \mathbb M^{**}$, $m\mapsto \tilde m$, where $\tilde m(w):=w(m)$ for any $w\in\mathbb M^*$, is  a monomorphism).\end{definition}

Every $\mathcal R$-submodule of a separated $\mathcal R$-module is separated.

\begin{example} If $\mathbb M$ is a dual  $\mathcal R$-module, then it is separated: Given $0\neq w\in \mathbb M=\mathbb N^*$, there exists an $n\in \mathbb N$ such that $w(n)\neq 0$.
Let $\tilde n\in \mathbb M^*$ be defined by $\tilde n(w'):=w'(n)$, for any $w'\in \mathbb M$. Then, $\tilde n(w)\neq 0$.\end{example}

\begin{proposition}\label{2125} Let $R=K$ be a field and let $\mathbb M$ be a $\mathcal K$-module
such that $\mathbb M^*$ is well defined.
$\mathbb M$ is separated iff the natural 
morphism $\mathbb M \to \mathbb M_{sch}$ is  a monomorphism.
Therefore, $\mathbb M$ is separated iff it is a $\mathcal K$-submodule of a $\mathcal K$-module scheme.
\end{proposition}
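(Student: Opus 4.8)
The plan is to prove the displayed equivalence and then read off the final sentence. One direction is formal: if the natural morphism $\mathbb M\to\mathbb M_{sch}$ is a monomorphism, then $\mathbb M$ is separated, because $\mathbb M_{sch}=((\mathbb M^*)_{qc})^*$ is a dual $\mathcal K$-module, hence separated by the example that every dual functor is separated, and every $\mathcal R$-submodule of a separated $\mathcal R$-module is separated. The same remark gives half of the last assertion, since a $\mathcal K$-module scheme $\mathcal N^*$ is a dual functor.

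For the converse, assume $\mathbb M$ is separated; I must show that $\mathbb M(S)\to\mathbb M_{sch}(S)$ is injective for every $S$. Fix $0\neq m\in\mathbb M(S)$; it suffices to produce $w_0\in\Hom_{\mathcal R}(\mathbb M,\mathcal R)=\mathbb M^*(R)$ with $(w_0)_S(m)\neq 0$, for then the image $\tilde m\in\mathbb M_{sch}(S)=\Hom_R(\mathbb M^*(R),S)$ of $m$ satisfies $\tilde m(w_0)=(w_0)_S(m)\neq 0$, hence $\tilde m\neq 0$. Separatedness supplies only a commutative $S$-algebra $T$ and a morphism $w\colon\mathbb M\to\mathcal T$ with $w_S(m)\neq 0$ in $\mathcal T(S)=T\otimes_KS$, so the task is to descend $w$ to a morphism valued in $\mathcal K=\mathcal R$. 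This is where the hypothesis $R=K$ enters: $T$ is a free $K$-module, so, fixing a $K$-basis $\{e_j\}_{j\in J}$, we have $T\otimes_KS=\oplus_JS$ and the nonzero element $w_S(m)$ has some nonzero coordinate, say the $j_0$-th. The coordinate functional $p_{j_0}\colon T\to K$ induces an $\mathcal R$-module morphism $\bar p_{j_0}\colon\mathcal T\to\mathcal K$, and $w_0:=\bar p_{j_0}\circ w$ then satisfies $(w_0)_S(m)\neq 0$; it lies in $\mathbb M^*(R)$ because $\mathbb M^*$ is assumed well defined.

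Finally, the last sentence follows at once: if $\mathbb M$ is separated, the monomorphism $\mathbb M\hookrightarrow\mathbb M_{sch}$ realizes it as a $\mathcal K$-submodule of the $\mathcal K$-module scheme associated with the $K$-vector space $\mathbb M^*(R)$, and conversely a $\mathcal K$-submodule of any $\mathcal K$-module scheme is separated by the first paragraph. I expect the only real step to be the descent of $w$ in the second paragraph, i.e. using freeness of $K$-modules to trade a functional valued in $\mathcal T$ for one valued in $\mathcal K$; the rest is routine unwinding of the definition of $\mathbb M_{sch}$ and of the adjunctions of Section 2 (Propositions \ref{trivial}, \ref{adj2}, \ref{tercerb}).
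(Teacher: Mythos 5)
Your argument is correct and is essentially the paper's own proof: the paper establishes the forward implication in contrapositive form (if $\tilde m=0$ in $\mathbb M_{sch}(S)$ then every $w_T\in \mathbb M^*(T)$ kills $m$, by writing $\mathcal T=\oplus_I\mathcal K$ along a $K$-basis and expanding $w_T=\sum_i w_i\cdot e_i$ with $w_i\in\mathbb M^*(K)$), which is exactly your coordinate-projection descent of $w$ read backwards. The converse and the final sentence are handled the same way in both proofs, via the fact that $\mathbb M_{sch}$ is a dual functor (hence separated) and that submodules of separated modules are separated.
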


\begin{proof}
Assume $\mathbb M$ is separated. Let $m \in \mathbb M(S)$ be such that
$m = 0$ in ${\mathbb M_{sch}}(S)$. $ {\mathbb M_{sch}}(S)\overset{\text{\ref{1211b}}}= {\rm Hom}_K
(\mathbb M^*(K), S)$, then $m(w):= w(m)= 0$ for any $w \in \mathbb M^*(K)$.

Let $T$ be a commutative $S$-algebra, and let $\{ e_i\}_{i\in I}$ be a $K$-basis of $T$. Consider the composite morphism
$$\mathbb M^*(T)
\overset{\text{\ref{adj2}}}= {\rm Hom}_{\mathcal K} (\mathbb M, {\mathcal T}) = {\rm Hom}_{\mathcal K} (\mathbb M,
\oplus_I{{\mathcal K}}) \subset \prod_I {\rm Hom}_{\mathcal K} (\mathbb M, {{\mathcal K}}),$$ which
assigns to every $w_T \in \mathbb M^*(T)$ a $(w_i) \in \prod \mathbb M^*(K)$.
Specifically,  $w_T(m')= \sum_i w_i(m') \cdot
e_i$, for any $m' \in \mathbb M(T)$. Therefore, $w_T(m)=0$ for any $w_T \in \mathbb M^*(T)$. As $\mathbb M$ is separated, this means that
$m = 0$, i.e., the morphism $\mathbb M \to {\mathbb M_{sch}}$ is  a monomorphism.

Now, assume $\mathbb M \to {\mathbb M_{sch}}$ is  a monomorphism. Observe that ${\mathbb M}_{sch}$ is separated because it is reflexive. Then, $\mathbb M$ is separated.

Finally, the second statement of the proposition is obvious.
\end{proof}

\begin{theorem} \label{RPQ} Let $R=K$ be a field. $\mathbb M$ is a reflexive $\mathcal K$-module iff
$\mathbb M$ is equal to the inverse limit of its quasi-coherent quotient $\mathcal R$-modules.
\end{theorem}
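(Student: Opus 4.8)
The plan is to prove the two implications separately, relying on the machinery already developed. Let $\{\mathcal N_i\}_{i\in I}$ denote the family of quasi-coherent quotient $\mathcal K$-modules of $\mathbb M$, with natural epimorphisms $\pi_i\colon \mathbb M\to\mathcal N_i$. I want to show $\mathbb M$ is reflexive iff the natural morphism $\mathbb M\to\plim{i\in I}\mathcal N_i$ is an isomorphism.

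For the direction $(\Leftarrow)$: suppose $\mathbb M=\plim{i\in I}\mathcal N_i$. First I would observe that $I$ is a downward directed set exactly as in the proof of Theorem \ref{P2} (given two quasi-coherent quotients, the image of $\mathbb M\to\mathcal N_1\times\mathcal N_2$ is a third one dominated by both; this uses that over a field the image of a morphism to a quasi-coherent module is quasi-coherent only after we know $\mathbb M$ is dually separated, so I would instead argue directly that the fibre product of the two quotients pulls back to a quasi-coherent quotient, or note that an inverse limit of quasi-coherent modules is automatically dually separated-ish — more carefully, I would first establish that $\mathbb M$ being an inverse limit of quasi-coherent modules makes $\mathbb M^*$ computable). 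Then $\mathbb M^*=(\plim{i}\mathcal N_i)^*$; dualizing the inverse limit turns it into a direct limit, and using Lemma \ref{L5.11} together with Proposition \ref{trivial} I expect $\mathbb M^*=\ilim{i}\mathcal N_i^*$, so by Theorem \ref{P2} $\mathbb M$ is dually separated. Now reflexivity: I must show $\mathbb M\to\mathbb M^{**}$ is an isomorphism. Since $\mathbb M^*=\ilim{i}\mathcal N_i^*$, dualizing again gives $\mathbb M^{**}=\plim{i}\mathcal N_i^{**}=\plim{i}\mathcal N_i=\mathbb M$, using $\mathcal N_i^{**}=\mathcal N_i$ (Theorem \ref{reflex}) and that $\mathbb Hom$ out of a direct limit is the inverse limit of the $\mathbb Hom$'s. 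The composite being the identity needs a small diagram chase.

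For the direction $(\Rightarrow)$: suppose $\mathbb M$ is reflexive. By Corollary \ref{QHom2} applied to $\mathbb M^*$... wait — I actually want dual separation of $\mathbb M$ itself. Here is where the stated remark in the introduction enters: "if $R$ is a field and $\mathbb M$ is a reflexive functor, then $\mathbb M$ is dually separated." I would prove this as a lemma first: $\mathbb M$ reflexive means $\mathbb M=\mathbb N^*$ for $\mathbb N=\mathbb M^*$, and a dual functor is always separated (shown in the Example after the separation definition); over a field I would upgrade "separated" to "dually separated" — the natural map $\mathbb M^*\to{\mathbb M_{qc}}^*$ being mono. Concretely, given $0\neq w\in\mathbb M^*(S)=\Hom_{\mathcal S}(\mathbb M_{|S},\mathcal S)$, I need its image in $\Hom_S(\mathbb M(S),S)$ to be nonzero; pick $m\in\mathbb M(T)$ for some $S$-algebra $T$ with $w_T(m)\neq 0$, expand over a $K$-basis of $T$ as in the proof of Proposition \ref{2125} to reduce to $T=K$... this is essentially the argument of \ref{2125}, showing $\mathbb M\hookrightarrow\mathbb M_{sch}$, hence $\mathbb M^*(K)$ separates points and dual separation follows. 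Once $\mathbb M$ is dually separated, Theorem \ref{P2} gives $\mathbb M^*=\ilim{i}\mathcal N_i^*$, and dualizing once more with reflexivity yields $\mathbb M=\mathbb M^{**}=\plim{i}\mathcal N_i^{**}=\plim{i}\mathcal N_i$, which is the claim.

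The main obstacle I anticipate is the step "$\mathbb M$ reflexive $\Rightarrow$ $\mathbb M$ dually separated": separated-ness of a dual functor is cheap, but promoting it to dual separation (controlling $\mathbb M^*(S)\to\Hom_S(\mathbb M(S),S)$ for every $S$, not just the vanishing of the double-dual map) is exactly the content that makes the field hypothesis necessary, and it requires the basis-expansion trick of Proposition \ref{2125} to push information at a general algebra $T$ down to $K$. The other delicate point is verifying that the canonical comparison morphisms — $\mathbb M\to\plim{i}\mathcal N_i$ in one direction, and the iterated biduality isomorphism in the other — actually coincide, so that "$\mathbb M^*=\ilim{i}\mathcal N_i^*$ as abstract functors" genuinely implies "$\mathbb M=\plim{i}\mathcal N_i$ via the natural map"; this is routine but must be checked by chasing the adjunctions of Propositions \ref{trivial}, \ref{tercer} and \ref{tercerb}.
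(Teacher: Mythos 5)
Your forward direction is essentially the paper's argument. The paper observes that $\mathbb M^*$ is separated because it is a dual functor, applies Proposition \ref{2125} to $\mathbb M^*$ (not to $\mathbb M$) to obtain that $\mathbb M^*\to {\mathbb M^*}_{sch}$ is a monomorphism, and uses reflexivity to identify ${\mathbb M^*}_{sch}=((\mathbb M^{**})_{qc})^*={\mathbb M_{qc}}^*$; this is precisely the statement that $\mathbb M$ is dually separated, after which Theorem \ref{P2} and $\mathbb M=\mathbb M^{**}$ finish. Your description garbles which module \ref{2125} is applied to (you speak of showing $\mathbb M\hookrightarrow\mathbb M_{sch}$, when what is needed is $\mathbb M^*\hookrightarrow{\mathbb M^*}_{sch}$), but the key idea --- separatedness of the dual functor upgraded via \ref{2125} and reflexivity --- is the right one.

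The genuine gap is in your converse. You propose to deduce $\mathbb M^*=\ilim{i}\mathcal N_i^*$ from $\mathbb M=\plim{i}\mathcal N_i$ by ``dualizing the inverse limit,'' citing Lemma \ref{L5.11} and Proposition \ref{trivial}. But the dual of an inverse limit is not formally the direct limit of the duals (only the reverse statement, $(\ilim{i} X_i)^*=\plim{i} X_i^*$, is formal), and Lemma \ref{L5.11} computes $\Hom$ \emph{out of a module scheme into a direct limit}; it says nothing about $\Hom$ out of an inverse limit. The identity $(\prod_{\mathbb N}\mathcal K)^*=\oplus_{\mathbb N}\mathcal K$ is an instance of exactly the claim you need, and the paper must prove such statements with the nontrivial Lemmas \ref{main} and \ref{refpro}; it is not a formal manipulation. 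The paper's route runs in the opposite order: first rewrite $\plim{i}\mathcal N_i=(\ilim{i}\mathcal N_i^*)^*$ using $\mathcal N_i=\mathcal N_i^{**}$ (Theorem \ref{reflex}) and the formal direction of the adjunction; then observe that $\ilim{i}\mathcal N_i^*$ is dually separated (Example \ref{3.3.3}, since over a field every module is free, together with Proposition \ref{3.3.4}); and then invoke Corollary \ref{QHom2} --- over a field the dual of a dually separated module is dually separated --- to conclude that $\mathbb M$ is dually separated. Only \emph{after} that does Theorem \ref{P2} yield $\mathbb M^*=\ilim{i}\mathcal N_i^*$, whence $\mathbb M^{**}=\plim{i}\mathcal N_i=\mathbb M$. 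Corollary \ref{QHom2} is the ingredient your plan never invokes in this direction, and without it (or an equivalent substitute) the computation of $\mathbb M^*$ that your argument starts from is unjustified.
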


\begin{proof} Suppose that $\mathbb M$ is reflexive. $\mathbb M^*$ is separated, because it is a dual $\mathcal R$-module.  By Proposition \ref{2125}, the morphism
$\mathbb M^*\to {\mathbb M^*}_{sch}={\mathbb M_{qc}}^* $ is  a monomorphism.
Then, $\mathbb M$ is  dually separated.
Let $\{\mathcal M_i\}_{i\in I}$ be the set of  all
quasi-coherent quotient modules of $\mathbb M$. Then,
$\mathbb M^*=\ilim{i\in I}  \mathcal M_i^*$, by Theorem \ref{P2}. Therefore,
$$\mathbb M=\mathbb M^{**}=\plim{i\in I}  \mathcal M_i.$$

Suppose now that $\mathbb M$ is equal to the inverse limit of its quasi-coherent quotient $\mathcal K$-modules, $\mathbb M=\plim{i} \mathcal N_i$. Then, $\mathbb M=(\ilim{i} \mathcal N_i^*)^*$ is  dually separated, by \ref{3.3.4} and \ref{QHom2}.
By Theorem \ref{P2}, $\mathbb M^*=\ilim{i} \mathcal N_i^*$ and $\mathbb M=\plim{i} \mathcal N_i=\mathbb M^{**}$.
\end{proof}

Let $R={\mathbb Z}$ and $M={\mathbb Z}/2{\mathbb Z}$. Then, $\mathbb M:=\mathcal M^*$ is reflexive but it is not  dually separated, because ${\mathbb M_{qc}}^*=0$,  because $\mathbb M(R)=0$.

\section{Proquasi-coherent modules}

\begin{definition} An $\mathcal R$-module is said to be a proquasi-coherent module if it is an inverse limit of quasi-coherent $\mathcal R$-modules.\end{definition}

In this section, $K$ will be a field.

\begin{example} Reflexive  $\mathcal K$-modules are proquasi-coherent, by Theorem \ref{RPQ}.

\end{example}

\begin{proposition} \label{P6} If $\mathbb M$ is a  proquasi-coherent $\mathcal K$-module, then it is a dual  $\mathcal K$-module and it is a direct  limit of $\mathcal K$-module shemes.
In particular, proquasi-coherent $\mathcal K$-modules are  dually separated.
\end{proposition}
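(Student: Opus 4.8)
The plan is to realize $\mathbb M$ as the dual of an explicit direct limit of module schemes, and then to feed that direct limit back through Theorem \ref{P2}. Write $\mathbb M=\plim{j}\mathcal M_j$ with $J$ downward directed and each $\mathcal M_j$ quasi-coherent, and set $\mathbb N:=\ilim{j}\mathcal M_j^*$ (the direct limit taken over the opposite, upward directed, order); this is by construction a direct limit of $\mathcal K$-module schemes. First I would check that $\mathbb M=\mathbb N^*$. Indeed, dualization turns direct limits into inverse limits, $(\ilim{j}\mathcal M_j^*)^*=\plim{j}\mathcal M_j^{**}$ (the same computation used in the proof of Proposition \ref{3.3.4}), and each $\mathcal M_j$ is reflexive by Theorem \ref{reflex}, so $\mathbb N^*=\plim{j}\mathcal M_j^{**}=\plim{j}\mathcal M_j=\mathbb M$. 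In particular $\mathbb M$ is a dual $\mathcal K$-module.

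Next I would show that $\mathbb N$ is dually separated. Each $\mathcal M_j$ is quasi-coherent, hence dually separated; since $K$ is a field, Corollary \ref{QHom2} gives that each module scheme $\mathcal M_j^*$ is dually separated, and Proposition \ref{3.3.4} then shows that the direct limit $\mathbb N=\ilim{j}\mathcal M_j^*$ is dually separated. Now comes the key move: apply Theorem \ref{P2} to $\mathbb N$ rather than to $\mathbb M$. Since $\mathbb N$ is dually separated, that theorem yields $\mathbb N^*=\ilim{i}\mathcal L_i^*$, where $\{\mathcal L_i\}_{i}$ is the family of quasi-coherent quotient modules of $\mathbb N$. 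Because $\mathbb N^*=\mathbb M$, this exhibits $\mathbb M=\ilim{i}\mathcal L_i^*$ as a direct limit of $\mathcal K$-module schemes, which is the second assertion.

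Finally, the ``in particular'' statement follows at once: $\mathbb M$ is now a direct limit of module schemes $\mathcal L_i^*$, each of which is dually separated by Corollary \ref{QHom2}, so $\mathbb M$ is dually separated by Proposition \ref{3.3.4} (alternatively, apply Corollary \ref{QHom2} directly to $\mathbb N$, using $\mathbb M=\mathbb N^*$). The step I expect to require the most care is the passage $(\ilim{j}\mathcal M_j^*)^*=\plim{j}\mathcal M_j^{**}$: one must make sure that dualization genuinely interchanges the direct and inverse limits at the level of functors (restricting to the $S$-algebras and using that $\Hom$ carries direct limits to inverse limits), and that $J$ being downward directed makes $\{\mathcal M_j^*\}$ an honest direct system. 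The conceptual subtlety, and the real engine of the argument, is recognizing that it is $\mathbb N$, not $\mathbb M$, to which Theorem \ref{P2} should be applied, so that its conclusion lands on $\mathbb N^*=\mathbb M$ itself rather than on $\mathbb M^*$.
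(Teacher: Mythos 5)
Your proposal is correct and takes essentially the same route as the paper's proof: write $\mathbb M=\plim{j}\mathcal M_j=(\ilim{j}\mathcal M_j^*)^*$, observe that $\ilim{j}\mathcal M_j^*$ is dually separated, apply Theorem \ref{P2} to that direct limit (not to $\mathbb M$) so the conclusion lands on its dual $\mathbb M$, and get dual separatedness of $\mathbb M$ from Corollary \ref{QHom2} via Proposition \ref{3.3.4} --- exactly the paper's argument, including the ``key move'' you highlight. The only cosmetic difference is that the paper justifies each $\mathcal M_j^*$ being dually separated by Example \ref{3.3.3} (over a field every vector space is free), whereas you route through Corollary \ref{QHom2}; both are valid and non-circular.
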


\begin{proof} $\mathbb M=\plim{} \mathcal M_i=(\ilim{} \mathcal M_i^*)^*$. $\ilim{} \mathcal M_i^*$ is  dually separated    by Example \ref{3.3.3} and Proposition \ref{3.3.4}. Then, its dual, which is $\mathbb M$, is a direct  limit of $\mathcal K$-module shemes, by Theorem \ref{P2} and it is  dually separated   by Corollary \ref{QHom2}.\end{proof}

\begin{proposition} \label{K1} 
Let $\mathbb P$ be a proquasi-coherent  $\mathcal K$-module and $\mathbb M$ a separated $\mathcal K$-module. Let $f\colon \mathbb P\to \mathbb M$ be a morphism of $\mathcal K$-modules.  Then,
$\Ker f$ is proquasi-coherent.\end{proposition}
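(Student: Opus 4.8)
The plan is to strip away the structure of both $\mathbb{M}$ and $\mathbb{P}$ until the question becomes that of the kernel of a morphism from a proquasi-coherent module to a \emph{quasi-coherent} one, and then to realize such a kernel as an explicit inverse limit of quasi-coherent modules.

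First I would exploit that $\mathbb{M}$ is separated. By Proposition~\ref{2125} the natural morphism $\mathbb{M}\to\mathbb{M}_{sch}$ is a monomorphism, and by Proposition~\ref{1211b}(1), since $R=K$ is a field, one has $\mathbb{M}_{sch}=\prod^J\mathcal{K}$, where $J$ is a $K$-basis of $\mathbb{M}^*(K)$. As $\Ker f=\Ker(\mathbb{P}\to\mathbb{M}\hookrightarrow\mathbb{M}_{sch})$, we may replace $\mathbb{M}$ by $\prod^J\mathcal{K}$, so that $f$ is given by its components $f_j\colon\mathbb{P}\to\mathcal{K}$. Now $\Ker f=\bigcap_{j\in J}\Ker f_j$ is the inverse limit, over the finite subsets $J'\subseteq J$, of the kernels of the morphisms $(f_j)_{j\in J'}\colon\mathbb{P}\to\mathcal{K}^{J'}$, whose targets are quasi-coherent. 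Since an iterated inverse limit is an inverse limit, an inverse limit of proquasi-coherent modules is proquasi-coherent; hence it is enough to prove: \emph{if $\mathbb{P}$ is proquasi-coherent and $g\colon\mathbb{P}\to\mathcal{N}$ is a morphism to a quasi-coherent module $\mathcal{N}$, then $\Ker g$ is proquasi-coherent.}

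For this I would use that $\mathbb{P}$ is itself dually separated, by Proposition~\ref{P6}. By Theorem~\ref{3.14} (which requires $R=K$ a field) the image $\mathcal{I}$ of $g$ is quasi-coherent, and $g$ factors as $\mathbb{P}\twoheadrightarrow\mathcal{I}\hookrightarrow\mathcal{N}$ with $\Ker g=\Ker(\mathbb{P}\twoheadrightarrow\mathcal{I})$, so we may assume $g$ is an epimorphism onto a quasi-coherent module $\mathcal{N}$. Write $\mathbb{P}=\plim{i\in I}\mathcal{M}_i$. The crucial point is that $g$ factors through one of the projections $\pi_{i_0}\colon\mathbb{P}\to\mathcal{M}_{i_0}$: dualizing $g$ by Proposition~\ref{trivial} gives a morphism $\tilde g\colon\mathcal{N}^*\to\mathbb{P}^*$ from the module scheme $\mathcal{N}^*$; identifying $\mathbb{P}^*$ with $\ilim{i}\mathcal{M}_i^*$ and applying Lemma~\ref{L5.11}, the morphism $\tilde g$ factors through some $\mathcal{M}_{i_0}^*$, and dualizing back yields $g=g_{i_0}\circ\pi_{i_0}$ for a morphism $g_{i_0}\colon\mathcal{M}_{i_0}\to\mathcal{N}$ of quasi-coherent modules. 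Over a field the kernel $\mathcal{L}:=\Ker g_{i_0}$ is quasi-coherent, and therefore
$$\Ker g=\pi_{i_0}^{-1}(\mathcal{L})=\mathbb{P}\times_{\mathcal{M}_{i_0}}\mathcal{L}=\plim{i\ge i_0}\bigl(\mathcal{M}_i\times_{\mathcal{M}_{i_0}}\mathcal{L}\bigr),$$
an inverse limit of quasi-coherent modules (fiber products of quasi-coherent $\mathcal{K}$-modules are quasi-coherent over a field, being kernels of morphisms of quasi-coherent modules), hence proquasi-coherent.

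The main obstacle is the identification $\mathbb{P}^*=\ilim{i}\mathcal{M}_i^*$ when $\mathbb{P}=\plim{i}\mathcal{M}_i$ — equivalently, the cofinal factorization of $g$ through a single $\mathcal{M}_{i_0}$. The canonical morphism $\ilim{i}\mathcal{M}_i^*\to(\plim{i}\mathcal{M}_i)^*$ always exists; its bijectivity has to be extracted from the duality developed in the previous sections, using that $\ilim{i}\mathcal{M}_i^*$ is a direct limit of module schemes, hence dually separated (Example~\ref{3.3.3}, Proposition~\ref{3.3.4}), together with Theorems~\ref{P2} and~\ref{RPQ} to control its bidual. Should this resist a direct argument, an alternative is to first reduce by cofinality to the case where $I$ is well-ordered and carry out the factorization by transfinite induction along $I$. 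This is the only step where the hypotheses are truly used — proquasi-coherence of $\mathbb{P}$ through Proposition~\ref{P6}, and separatedness of $\mathbb{M}$ only through Proposition~\ref{2125} and Theorem~\ref{3.14}.
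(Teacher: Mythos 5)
Your first paragraph is sound and is essentially the paper's own reduction: embed $\mathbb M$ into a product $\prod_I\mathcal K$ of copies of $\mathcal K$ via Proposition \ref{2125}, and write $\Ker f$ as the inverse limit, over finite subsets $I'$, of the kernels of the induced morphisms $\mathbb P\to \mathcal K^{I'}$. The gap is in the second half, precisely at the step you flag as ``the main obstacle'': the factorization of an epimorphism $g\colon\mathbb P=\plim{i}\mathcal M_i\to\mathcal N$ onto a quasi-coherent module through one of the projections $\pi_{i_0}\colon\mathbb P\to\mathcal M_{i_0}$. This is equivalent to the surjectivity of the canonical map $\ilim{i}\mathcal M_i^*\to\mathbb P^*$, and none of the results you invoke deliver it. Theorem \ref{P2} identifies $\mathbb P^*$ with $\ilim{j}\mathcal N_j^*$ where $\mathcal N_j$ runs over \emph{all} quasi-coherent quotients of $\mathbb P$; it says nothing about the images of $\mathbb P$ in the given $\mathcal M_i$ being cofinal among those quotients. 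Worse, if $\mathbb P^*=\ilim{i}\mathcal M_i^*$ held, then $\mathbb P^{**}=(\ilim{i}\mathcal M_i^*)^*=\plim{i}\mathcal M_i^{**}=\plim{i}\mathcal M_i=\mathbb P$, i.e.\ every inverse limit of quasi-coherent $\mathcal K$-modules would be reflexive --- exactly the question the authors state in the introduction that they cannot answer, and which Proposition \ref{Fcansado} settles only for totally ordered index sets by a delicate ad hoc argument. So this identification cannot be ``extracted from the duality developed in the previous sections,'' and the transfinite-induction fallback is not an argument.

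The paper's proof avoids any such factorization. After the same reduction it inducts on $\#I'$ down to a single nonzero $f\colon\mathbb P\to\mathcal K$. Since $\mathbb P$ is dually separated, $f\neq 0$ forces $f_K\neq0$, so there is $v=(v_i)\in\plim{i}V_i=\mathbb P(K)$ with $f_K(v)\neq0$, giving a splitting $\mathbb P=\Ker f\oplus\mathcal K\cdot v$; then, dualizing the exact sequences $0\to\mathcal K\cdot v_i\to\mathcal V_i\to\mathcal V_i/\langle v_i\rangle\to0$, passing to the direct limit and dualizing back, one identifies $\Ker f$ with $\plim{i}(\mathcal V_i/\langle v_i\rangle)$, an inverse limit of quasi-coherent modules. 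Note that $f$ itself is never factored through a finite stage of the inverse system. To repair your argument you would need to replace the factorization step by a device of this kind.
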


\begin{proof} By Theorem \ref{2125}, there exist  a $K$-vector space $V$  and  a monomorphism
$\mathbb M\hookrightarrow \mathcal V^*$. We can assume $\mathbb M=\mathcal V^*= \prod_{I}\mathcal K$. Given $I'\subset I$, let $f_{I'}$ be the composition of
$f$ with the obvious projection $ \prod_{I}\mathcal K\to  \prod_{I'}\mathcal K$.
Then,
$$\Ker f=\plim{I'\subset I,\, \#I'<\infty} \Ker f_{I'}$$
It is sufficient to prove that $\Ker f_{I'}$ is proquasi-coherent, since the inverse limit of proquasi-coherent modules is proquasi-coherent. Let us write $I'=I''\coprod \{i\}$. $\Ker f_{I'}$ is the kernel of the composite morphism $\Ker f_{\{i\}}\hookrightarrow \mathbb P\to \prod_{I''}\mathcal K$.
By induction on $\# I'$, it is sufficient to prove that $\Ker f_{i}$ is proquasi-coherent.
Let us write $f=f_{\{i\}}$.

If  $f\colon \mathbb P\to\mathcal K$ is the zero morphism the proposition  is obvious. Assume $f\neq 0$.
Then, $f$ is an epimorphism (because $\mathbb P$ is dually separated). Let us write $\mathbb P= \plim{i} \mathcal V_i$ and let $v=(v_i)\in \plim{i} V_i=\mathbb P(K)$ be a vector such that $f_K((v_i))\neq 0$. Then, $\mathbb P=\Ker f\oplus\mathcal K\cdot v$.
Let $\bar V_i:=V_i/\langle v_i\rangle$.
Let us prove that $\Ker f\simeq \plim{i}\bar{\mathcal V}_i$: Let $i'$ be such that $v_{i'}\neq 0$. Consider the exact sequences
$$0\to\mathcal K \cdot v_i\to \mathcal V_i\to \bar{\mathcal V}_i\to 0, \qquad (i>i')$$
Dually, we have the exact sequences
$$0\to \bar{\mathcal V}_i^*\to \mathcal V_i^* \to \mathcal K\to 0$$
Taking the direct limit we have the exact sequence
$$0\to \ilim{i} (\bar{\mathcal V}_i^*)\to \ilim{i} (\mathcal V_i^*) \to \mathcal K\to 0$$
Dually, we have the exact sequence
$$0\to \mathcal K\cdot v\to \mathbb P\to \plim{i} \bar{\mathcal V}_i\to 0$$
Then, $\Ker f\to \plim{i}\bar{\mathcal V}_i$, $(v_i)_i\mapsto (\bar v_i)_i$ is an isomorphism.

\end{proof}

\begin{proposition} \label{5.5} Every direct summand of a pro\-qua\-si-co\-herent module is proquasi-coherent.\end{proposition}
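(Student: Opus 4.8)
The plan is to reduce the statement to Proposition \ref{K1} by exhibiting a direct summand of a proquasi-coherent module as the kernel of an idempotent endomorphism. Suppose $\mathbb P$ is proquasi-coherent and $\mathbb P=\mathbb M\oplus\mathbb M'$. Let $\pi\colon\mathbb P\to\mathbb P$ be the projection onto $\mathbb M'$ along $\mathbb M$, so $\pi$ is an $\mathcal K$-module morphism with $\pi^2=\pi$ and $\Ker\pi=\mathbb M$. To apply Proposition \ref{K1} I need the target of the relevant morphism to be separated; the natural choice is to factor $\pi$ through its image, or more simply to observe that $\mathbb M'$, being a direct summand of $\mathbb P$, is itself separated because $\mathbb P$ is separated (by Proposition \ref{P6}, proquasi-coherent $\mathcal K$-modules are dually separated, hence separated, and every submodule of a separated module is separated, as noted right after the definition of separated). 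Then $\pi$, viewed as a morphism $\mathbb P\to\mathbb M'$, has $\Ker\pi=\mathbb M$, and Proposition \ref{K1} applies directly to give that $\mathbb M$ is proquasi-coherent.

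More concretely, the key steps in order are: first, write $\mathbb P=\mathbb M\oplus\mathbb M'$ and let $p\colon\mathbb P\to\mathbb M'$ be the projection, which is a morphism of $\mathcal K$-modules with $\Ker p=\mathbb M$. Second, note $\mathbb M'$ is a direct summand of $\mathbb P$, hence an $\mathcal K$-submodule of $\mathbb P$, hence separated since $\mathbb P$ is separated by Proposition \ref{P6}. Third, apply Proposition \ref{K1} to the morphism $p\colon\mathbb P\to\mathbb M'$ from the proquasi-coherent module $\mathbb P$ to the separated module $\mathbb M'$, concluding that $\mathbb M=\Ker p$ is proquasi-coherent. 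Since $\mathbb M$ was an arbitrary direct summand, this proves the proposition.

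I do not expect any serious obstacle here: the content is entirely in Proposition \ref{K1}, and the only thing to check is that the complement $\mathbb M'$ (used as the target) is separated, which is immediate from Proposition \ref{P6} together with the remark that submodules of separated modules are separated. If one preferred to avoid invoking separatedness of $\mathbb M'$, an alternative is to use that $\mathbb M'$ is again a direct summand of $\mathbb P$ and is therefore a quotient of $\mathbb P$, but the submodule route is cleaner. The one point worth stating carefully is why $p$ is a morphism of $\mathcal K$-modules and not merely of $\mathcal K$-sets — this is automatic since in a direct sum decomposition of $\mathcal K$-modules the projections are $\mathcal K$-linear by construction.
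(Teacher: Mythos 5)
The paper states Proposition \ref{5.5} without any proof at all, so there is no ``paper argument'' to compare yours against; what you have done is supply the missing argument, and your route through Proposition \ref{K1} is a natural and essentially correct one. The skeleton is sound: write $\mathbb P=\mathbb M\oplus\mathbb M'$, take the projection $p\colon\mathbb P\to\mathbb M'$ (which is indeed $\mathcal K$-linear with $\Ker p=\mathbb M$), check that the target is separated, and invoke Proposition \ref{K1}. There is no circularity, since Proposition \ref{K1} precedes \ref{5.5} and does not use it. The one point you should fix is the justification that $\mathbb P$ is separated: you write ``proquasi-coherent $\mathcal K$-modules are dually separated, hence separated,'' but \emph{dually separated} ($\mathbb M^*\to{\mathbb M_{qc}}^*$ a monomorphism) and \emph{separated} ($\mathbb M\to\mathbb M^{**}$ a monomorphism) are independent notions, and the paper nowhere proves that the first implies the second. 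The implication you actually need is the other half of Proposition \ref{P6}: a proquasi-coherent module is a \emph{dual} $\mathcal K$-module, and dual modules are separated by the Example following the definition of separatedness. With that substitution, the chain ``$\mathbb P$ is a dual module, hence separated; $\mathbb M'$ is a submodule of $\mathbb P$, hence separated; apply Proposition \ref{K1} to $p$'' is complete. (If one wanted to be fastidious, one would also note that $\mathbb M'^*$ is well defined because $\mathbb P^*\to\mathbb M'^*$ is a split surjection, so Proposition \ref{2125} applies inside the proof of \ref{K1}; the paper's footnote convention makes this a non-issue.)
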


\begin{theorem} \label{P=DQ} Let $\mathbb M$ be a $\mathcal K$-module. $\mathbb M$ is  proquasi-coherent
iff $\mathbb M$ is a dual $\mathcal K$-module and it is  dually separated. \end{theorem}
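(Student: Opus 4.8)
The plan is to prove the two implications separately, using the characterizations of dual separatedness established in Section 3 and the structure theory developed in this section. The forward implication is essentially a collection of facts already assembled: if $\mathbb M$ is proquasi-coherent, then by Proposition \ref{P6} it is a dual $\mathcal K$-module (write $\mathbb M=\plim{} \mathcal M_i=(\ilim{} \mathcal M_i^*)^*$) and it is dually separated, so there is nothing further to do. I would simply cite Proposition \ref{P6}.

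The substance is the converse. Assume $\mathbb M$ is a dual $\mathcal K$-module, say $\mathbb M=\mathbb N^*$, and that $\mathbb M$ is dually separated. Since $\mathbb M$ is dually separated over the field $K$, Theorem \ref{P2} applies: letting $\{\mathcal N_i\}_{i\in I}$ be the family of all quasi-coherent quotient modules of $\mathbb M$, the set $I$ is downward directed and $\mathbb M^*=\ilim{i\in I}\mathcal N_i^*$. Now I want to dualize this to recover $\mathbb M$ itself as an inverse limit of the $\mathcal N_i$'s. Because $\mathbb M$ is a dual module it is separated (dual modules are separated, as noted in the excerpt), and in fact the key point is to show $\mathbb M$ is \emph{reflexive}, i.e. $\mathbb M=\mathbb M^{**}$; once that is known, applying $(-)^*$ to $\mathbb M^*=\ilim{i}\mathcal N_i^*$ and using that the natural map $(\ilim{i}\mathcal N_i^*)^*\to \plim{i}\mathcal N_i^{**}=\plim{i}\mathcal N_i$ behaves well gives $\mathbb M=\mathbb M^{**}=\plim{i}\mathcal N_i$, exhibiting $\mathbb M$ as a proquasi-coherent module.

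So the main obstacle is establishing reflexivity of a dually separated dual module over a field. I would argue as follows. Write $\mathbb M=\mathbb N^*$. The natural morphism $\mathbb M\to\mathbb M^{**}$ is always a monomorphism since $\mathbb M$, being a dual module, is separated. For surjectivity, I would use that $\mathbb M^*=\ilim{i}\mathcal N_i^*$ is a direct limit of $\mathcal K$-module schemes and is dually separated (Corollary \ref{QHom2}, Proposition \ref{3.3.4}); hence by Proposition \ref{P6} (or directly by Theorem \ref{P2} applied in the other direction) its dual $\mathbb M^{**}=(\ilim{i}\mathcal N_i^*)^*=\plim{i}\mathcal N_i$ is proquasi-coherent, and moreover the composite $\mathbb M\to\mathbb M^{**}=\plim{i}\mathcal N_i$ is the canonical map into the inverse limit of the quotients $\mathbb M\to\mathcal N_i$. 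That this canonical map is an isomorphism is exactly the statement that $\mathbb M$ is reflexive, which for a dual $\mathcal K$-module I expect to follow by a Lemma \ref{L5.11}-style cofinality computation: for any $K$-vector space $V$,
\[
\Hom_{\mathcal K}(\mathcal V,\mathbb M^{**})=\Hom_{\mathcal K}(\mathbb M^*,\mathcal V^*)=\Hom_{\mathcal K}(\mathbb M^*,\ilim{} \text{(f.d. quotients of }\mathcal V^*)^*)
\]
reduces, via Lemma \ref{L5.11} and the fact that $\mathbb M=\mathbb N^*$, to $\Hom_{\mathcal K}(\mathcal V,\mathbb M)$, so $\mathbb M\to\mathbb M^{**}$ induces bijections on all $\Hom_{\mathcal K}(\mathcal V,-)$ and is therefore an isomorphism. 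I anticipate the delicate point is keeping the direct/inverse limit manipulations honest — in particular checking that $(\ilim{i}\mathcal N_i^*)^*=\plim{i}\mathcal N_i^{**}$ and that the identification is compatible with the canonical map from $\mathbb M$ — but all the needed interchange statements (Lemma \ref{L5.11}, the behavior of $(-)^*$ on direct limits, $\mathcal N_i^{**}=\mathcal N_i$ from Theorem \ref{reflex}) are available in the excerpt. Once reflexivity is in hand, Theorem \ref{RPQ} can be invoked directly to conclude $\mathbb M=\plim{i}\mathcal N_i$ is proquasi-coherent, which closes the argument.
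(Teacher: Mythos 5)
Your forward implication (citing Proposition \ref{P6}) and your identification of $\mathbb M^{**}=(\ilim{i}\mathcal N_i^*)^*=\plim{i}\mathcal N_i$ as proquasi-coherent via Theorem \ref{P2} agree with the paper. The converse, however, has a genuine gap: you reduce everything to showing that a dually separated dual module $\mathbb M=\mathbb N^*$ is \emph{reflexive}, and your argument for that does not go through. First, Lemma \ref{L5.11} is stated with a module scheme $\mathcal N^*$ as source; here the source would be $\mathbb M^*=\mathbb N^{**}$, which is only known to be a direct limit of module schemes, so the interchange you invoke is not covered by the lemma (and rewriting $\Hom_{\mathcal K}(\mathbb M^*,\mathcal V^*)$ via Proposition \ref{trivial} just returns you to $\Hom_{\mathcal K}(\mathcal V,\mathbb M^{**})$, which is circular). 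Second, and more seriously, even granting that $\mathbb M\to\mathbb M^{**}$ induces bijections on $\Hom_{\mathcal K}(\mathcal V,-)$ for every quasi-coherent $\mathcal V$, this does not imply it is an isomorphism: by Proposition \ref{tercer}, $\Hom_{\mathcal K}(\mathcal V,\mathbb F)=\Hom_K(V,\mathbb F(K))$ only detects the $K$-points of $\mathbb F$, and a morphism of functors that is bijective on $K$-points need not be an isomorphism (the canonical map $\mathbb M_{qc}\to\mathbb M$ is the standard counterexample).

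The paper sidesteps reflexivity entirely. Writing $\mathbb M=\mathbb N^*$, the dual of the canonical morphism $\mathbb N\to\mathbb N^{**}$ is a retraction of the canonical morphism $\mathbb M\to\mathbb M^{**}$ (the composite $\mathbb N^*\to\mathbb N^{***}\to\mathbb N^*$ is the identity), so $\mathbb M$ is a \emph{direct summand} of $\mathbb M^{**}=\plim{i}\mathcal N_i$, which is proquasi-coherent by Theorem \ref{P2}; Proposition \ref{5.5} (direct summands of proquasi-coherent modules are proquasi-coherent) then concludes. If you want to keep your outline, prove proquasi-coherence by this retraction argument first and only afterwards deduce reflexivity from Theorem \ref{RPQ}; as written, the reflexivity step is the missing idea.
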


\begin{proof} By Proposition \ref{P6}, we  only have  to prove the sufficiency.
 Let us write $\mathbb M=\mathbb N^*$. The dual morphism of the natural
 morphism $\mathbb N\to \mathbb N^{**}$ is a retraction of the natural morphism $\mathbb M\to \mathbb M^{**}$. Then, $\mathbb M^{**}=\mathbb M\oplus\mathbb M'$. By Proposition \ref{5.5}, $\mathbb M$ is proquasi-coherent, because $\mathbb M^{**}$ is proquasi-coherent by Theorem \ref{P2}.

\end{proof}

\begin{theorem} \label{P=DQ2} A  $\mathcal K$-module is proquasi-coherent iff it is the dual $\mathcal K$-module of  a dually separated $\mathcal K$-module.\end{theorem}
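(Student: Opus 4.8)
The plan is to obtain this statement as a direct repackaging of Theorem \ref{P=DQ} (a $\mathcal{K}$-module is proquasi-coherent iff it is a dual $\mathcal{K}$-module and dually separated) together with Corollary \ref{QHom2} (the dual of a dually separated $\mathcal{K}$-module is dually separated). I do not expect to need any ingredient beyond what already appears in the proof of Proposition \ref{P6}.

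For the ``if'' implication, I would start from $\mathbb{M} = \mathbb{L}^*$ with $\mathbb{L}$ dually separated. Then $\mathbb{M}$ is a dual $\mathcal{K}$-module by construction, and $\mathbb{M} = \mathbb{L}^*$ is dually separated by Corollary \ref{QHom2}. Hence Theorem \ref{P=DQ} gives that $\mathbb{M}$ is proquasi-coherent.

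For the ``only if'' implication, let $\mathbb{M} = \plim{i}\mathcal{M}_i$ be a proquasi-coherent $\mathcal{K}$-module, with each $\mathcal{M}_i$ quasi-coherent. Exactly as in the proof of Proposition \ref{P6}, I would use $\mathcal{M}_i = \mathcal{M}_i^{**}$ (Theorem \ref{reflex}) and the fact that dualizing carries direct limits to inverse limits to rewrite $\mathbb{M} = \plim{i}\mathcal{M}_i^{**} = (\ilim{i}\mathcal{M}_i^*)^*$. Setting $\mathbb{L} := \ilim{i}\mathcal{M}_i^*$, it remains only to see that $\mathbb{L}$ is dually separated: since $K$ is a field, each $\mathcal{M}_i$ is the quasi-coherent module of a free $K$-module, so $\mathcal{M}_i^*$ is dually separated by Example \ref{3.3.3}, and then $\mathbb{L}$, being a direct limit of dually separated modules, is dually separated by Proposition \ref{3.3.4}. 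Thus $\mathbb{M} = \mathbb{L}^*$ with $\mathbb{L}$ dually separated, as wanted.

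I do not expect a genuine obstacle here, since the ``only if'' direction is essentially already carried out inside the proof of Proposition \ref{P6}; the one point deserving a moment's care is the identification $\plim{i}\mathcal{M}_i = (\ilim{i}\mathcal{M}_i^*)^*$, which is the same rewriting used in Propositions \ref{3.3.4} and \ref{P6} and can simply be cited.
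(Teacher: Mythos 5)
Your proposal is correct and follows essentially the same route as the paper: the ``only if'' direction rewrites $\plim{i}\mathcal M_i$ as $(\ilim{i}\mathcal M_i^*)^*$ with $\ilim{i}\mathcal M_i^*$ dually separated (via Example \ref{3.3.3} and Proposition \ref{3.3.4}), and the ``if'' direction combines Corollary \ref{QHom2} with Theorem \ref{P=DQ}. You have merely spelled out a few citations the paper leaves implicit.
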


\begin{proof} If $\mathbb M=\plim{i}\mathcal M_i$ is proquasi-coherent, then $\mathbb M=(\ilim{i} \mathcal M_i^*)^*$.
$\ilim{i} \mathcal M_i^*$ is  dually separated    and $\mathbb M=(\ilim{i} \mathcal M_i^*)^*$.

If $\mathbb M'$ is  dually separated,
then $\mathbb M'^*$ is  dually separated, by Corollary \ref{QHom2}.
By Theorem \ref{P=DQ}, $\mathbb M'^*$ is proquasi-coherent.\end{proof}

\begin{proposition} \label{P4} If $\mathbb P,\mathbb P'$ are proquasi-coherent $\mathcal K$-modules, then  $\mathbb Hom_{\mathcal K}(\mathbb P, \mathbb P')$ is proquasi-coherent. In particular, $\mathbb P^*$ is proquasi-coherent.

\end{proposition}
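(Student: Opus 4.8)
The plan is to strip both arguments of $\mathbb Hom_{\mathcal K}(\mathbb P,\mathbb P')$ down to quasi-coherent modules using the structure theory of proquasi-coherent $\mathcal K$-modules, then invoke Theorem \ref{prop4} to recognise the surviving internal Hom as a tensor product, and finally reassemble. The two formal facts I would record first are that, for any $\mathcal K$-modules, $\mathbb Hom_{\mathcal K}(\ilim{i}\mathbb M_i,\mathbb N)=\plim{i}\mathbb Hom_{\mathcal K}(\mathbb M_i,\mathbb N)$ and $\mathbb Hom_{\mathcal K}(\mathbb M,\plim{j}\mathbb N_j)=\plim{j}\mathbb Hom_{\mathcal K}(\mathbb M,\mathbb N_j)$; both are checked section by section, using $(\ilim{i}\mathbb M_i)(S)=\ilim{i}\mathbb M_i(S)$ and $(\plim{j}\mathbb N_j)(S)=\plim{j}\mathbb N_j(S)$ (and the same over every $\mathcal S$) together with the fact that $\Hom_{\mathcal S}(-,-)$ turns direct limits in the source and inverse limits in the target into inverse limits.

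Now write $\mathbb P'=\plim{j}\mathcal M'_j$ with each $\mathcal M'_j$ quasi-coherent (definition of proquasi-coherent), and, by Proposition \ref{P6}, write $\mathbb P$ as a direct limit of $\mathcal K$-module schemes, $\mathbb P=\ilim{i}\mathcal N_i^*$ with each $\mathcal N_i$ quasi-coherent. Then
$$\mathbb Hom_{\mathcal K}(\mathbb P,\mathbb P')=\plim{j}\mathbb Hom_{\mathcal K}(\ilim{i}\mathcal N_i^*,\mathcal M'_j)=\plim{j}\plim{i}\mathbb Hom_{\mathcal K}(\mathcal N_i^*,\mathcal M'_j),$$
and by Theorem \ref{prop4} each term $\mathbb Hom_{\mathcal K}(\mathcal N_i^*,\mathcal M'_j)=\mathcal N_i\otimes_{\mathcal K}\mathcal M'_j$ is the quasi-coherent $\mathcal K$-module associated with $N_i\otimes_K M'_j$. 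Hence every $\mathbb Hom_{\mathcal K}(\mathbb P,\mathcal M'_j)=\plim{i}(\mathcal N_i\otimes_{\mathcal K}\mathcal M'_j)$ is proquasi-coherent, so $\mathbb Hom_{\mathcal K}(\mathbb P,\mathbb P')=\plim{j}\mathbb Hom_{\mathcal K}(\mathbb P,\mathcal M'_j)$ is an inverse limit of proquasi-coherent modules and therefore proquasi-coherent (inverse limits of proquasi-coherent $\mathcal K$-modules are proquasi-coherent, as already used in the proof of Proposition \ref{K1}). For the last assertion, take $\mathbb P'=\mathcal K$: then $\mathbb Hom_{\mathcal K}(\mathbb P,\mathcal K)=\mathbb P^*$ is proquasi-coherent; alternatively, $\mathbb P$ is dually separated by Proposition \ref{P6}, so $\mathbb P^*$ is the dual of a dually separated $\mathcal K$-module, hence proquasi-coherent by Theorem \ref{P=DQ2}.

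The substantive point to get right is purely the limit bookkeeping: one must be sure that $\mathbb Hom$ really converts the direct limit presenting $\mathbb P$ into an inverse limit --- this is what makes the indexing $\{i\}$ harmless and is exactly where Proposition \ref{P6}'s ``direct limit of module schemes'' description of $\mathbb P$ (rather than the tautological ``inverse limit of quasi-coherents'') is needed --- and that the resulting double inverse limit legitimately exhibits $\mathbb Hom_{\mathcal K}(\mathbb P,\mathbb P')$ as an inverse limit of quasi-coherent modules, which amounts to the interchange of limits plus section-wise left exactness of $\Hom_{\mathcal S}$. No deeper input is required; everything rests on Proposition \ref{P6} and Theorem \ref{prop4}.
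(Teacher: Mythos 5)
Your proposal is correct and follows essentially the same route as the paper: write $\mathbb P=\ilim{i}\mathcal V_i^*$ via Proposition \ref{P6}, $\mathbb P'=\plim{j}\mathcal V'_j$ by definition, convert the internal Hom into $\plim{i,j}\mathbb Hom_{\mathcal K}(\mathcal V_i^*,\mathcal V'_j)$, and identify each term as the quasi-coherent module $\mathcal V_i\otimes\mathcal V'_j$ by Theorem \ref{prop4}. The only difference is that you spell out the limit-interchange bookkeeping (and the $\mathbb P'=\mathcal K$ specialisation) that the paper leaves implicit.
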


\begin{proof} Let us write $\mathbb P=\ilim{i} \mathcal V_i^*$ and $\mathbb P'=\plim{j} \mathcal V'_j$.
Then,

$$\mathbb Hom_{\mathcal K}(\mathbb P, \mathbb P')=\mathbb Hom_{\mathcal K}(\ilim{i} \mathcal V_i^*, \plim{j} \mathcal V'_j)=\plim{i,j} \mathbb Hom_{\mathcal K}( \mathcal V_i^*,  \mathcal V'_j)=\plim{i,j} (\mathcal V_i\otimes\mathcal V'_j)$$
Hence, $\mathbb Hom(\mathbb P, \mathbb P')$ is proquasi-coherent.

\end{proof}

\begin{proposition} \label{P9} Let $\mathbb A$ be a $\mathcal K$-algebra  and  dually separated,
and let $\mathbb P,\mathbb P'$ be proquasi-coherent $\mathcal K$-modules and $\mathbb A$-modules.
Then, a morphism of $\mathcal K$-modules, $f\colon \mathbb P\to\mathbb P'$, is a morphism
of $\mathbb A$-modules iff $f_K\colon \mathbb P(K)\to\mathbb P'(K)$ is a morphism of $\mathbb A(K)$-modules.

\end{proposition}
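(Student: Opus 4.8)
The plan is to transcribe the proof of Proposition \ref{P10}(2), with the quasi-coherent modules there replaced by the proquasi-coherent $\mathbb A$-modules $\mathbb P$ and $\mathbb P'$. First I would attach to $f$ the morphism of $\mathcal K$-modules $F\colon \mathbb A\otimes_{\mathcal K}\mathbb P\to\mathbb P'$ given on each commutative $K$-algebra $S$ by $F_S(a\otimes p):=f_S(a\cdot p)-a\cdot f_S(p)$. Exactly as in \ref{P10}, $F$ is a well-defined morphism of $\mathcal K$-modules: the $S$-bilinearity of $F_S$ uses that the $S$-module structure on $\mathbb P(S)$ factors through $S\to\mathbb A(S)$, and naturality in $S$ is immediate. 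By construction, $f$ is a morphism of $\mathbb A$-modules iff $F=0$, and $f_K$ is a morphism of $\mathbb A(K)$-modules iff $F_K\colon \mathbb A(K)\otimes_K\mathbb P(K)\to\mathbb P'(K)$ is zero. Since $(\mathbb A\otimes_{\mathcal K}\mathbb P)(K)=\mathbb A(K)\otimes_K\mathbb P(K)$ and $F\mapsto F_K$ is the natural map $\Hom_{\mathcal K}(\mathbb A\otimes_{\mathcal K}\mathbb P,\mathbb P')\to\Hom_K((\mathbb A\otimes_{\mathcal K}\mathbb P)(K),\mathbb P'(K))$, it is enough to prove that this map is injective.

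To establish that injectivity I would use the tensor--hom adjunction $\Hom_{\mathcal K}(\mathbb A\otimes_{\mathcal K}\mathbb P,\mathbb P')=\Hom_{\mathcal K}(\mathbb A,\mathbb Hom_{\mathcal K}(\mathbb P,\mathbb P'))$ and then apply Theorem \ref{QHom} twice. For the first application, $\mathbb A$ is dually separated by hypothesis and $\mathbb Hom_{\mathcal K}(\mathbb P,\mathbb P')$ is proquasi-coherent by Proposition \ref{P4}, hence a dual $\mathcal K$-module by Proposition \ref{P6}; so Theorem \ref{QHom} gives the injection $\Hom_{\mathcal K}(\mathbb A,\mathbb Hom_{\mathcal K}(\mathbb P,\mathbb P'))\hookrightarrow\Hom_K(\mathbb A(K),\mathbb Hom_{\mathcal K}(\mathbb P,\mathbb P')(K))=\Hom_K(\mathbb A(K),\Hom_{\mathcal K}(\mathbb P,\mathbb P'))$. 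For the second, $\mathbb P$ is proquasi-coherent, hence dually separated, and $\mathbb P'$ is a dual module, both by Proposition \ref{P6}, so Theorem \ref{QHom} gives $\Hom_{\mathcal K}(\mathbb P,\mathbb P')\hookrightarrow\Hom_K(\mathbb P(K),\mathbb P'(K))$; post-composing with $\Hom_K(\mathbb A(K),-)$ yields an injection into $\Hom_K(\mathbb A(K),\Hom_K(\mathbb P(K),\mathbb P'(K)))=\Hom_K(\mathbb A(K)\otimes_K\mathbb P(K),\mathbb P'(K))$. Composing all of this shows $F\mapsto F_K$ is injective, whence $F=0$ iff $F_K=0$, which finishes the proof.

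I expect the only real obstacle to be bookkeeping: checking that the composite of the adjunction isomorphism with the two instances of the injection from Theorem \ref{QHom} is genuinely the evaluation-at-$K$ map $F\mapsto F_K$ — that is, that the injection of \ref{QHom} is natural and compatible with the tensor--hom adjunction, just as recorded in the commutative diagram displayed in the proof of Proposition \ref{P10}(2). Granting that compatibility, the argument is a direct copy of \ref{P10}(2) with "quasi-coherent" replaced throughout by "proquasi-coherent", with Proposition \ref{P4} supplying the analogue of the internal $\mathbb Hom$ staying within the relevant class, and with Proposition \ref{P6} guaranteeing the "dually separated" and "dual module" hypotheses needed to invoke Theorem \ref{QHom}.
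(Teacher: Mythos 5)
Your proposal is correct and is essentially the paper's own proof: the paper simply says ``Proceed as in the proof of Proposition \ref{P10} (2)'', and your argument is exactly that adaptation, carried out in detail. In particular, you correctly identified the one point where the quasi-coherent argument must be modified --- the equality $\Hom_{\mathcal R}(\mathcal M,\mathcal N)=\Hom_R(M,N)$ becomes only an injection $\Hom_{\mathcal K}(\mathbb P,\mathbb P')\hookrightarrow\Hom_K(\mathbb P(K),\mathbb P'(K))$, supplied by a second application of Theorem \ref{QHom} via Propositions \ref{P4} and \ref{P6}.
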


\begin{proof} Proceed  as in the proof of Proposition \ref{P10} (2).

\end{proof}

\begin{lemma} \label{main} Let $M$ be an $R$-module. Then,
$$\mathbb Hom_{\mathcal R}(\prod_I\mathcal R,\mathcal M)=\oplus_I \mathbb Hom_{\mathcal R}(\mathcal R,\mathcal M)=\oplus_I\mathcal M$$

\end{lemma}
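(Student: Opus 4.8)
The plan is to compute the functor $\mathbb Hom_{\mathcal R}(\prod_I\mathcal R,\mathcal M)$ by evaluating it on an arbitrary commutative $R$-algebra $S$, i.e.\ by computing $\Hom_{\mathcal S}((\prod_I\mathcal R)_{|S},\mathcal M_{|S})$. Since $(\prod_I\mathcal R)_{|S}=\prod_I\mathcal S$ and $\mathcal M_{|S}$ is the quasi-coherent $\mathcal S$-module associated with $M\otimes_R S$, it suffices to prove the statement for $R$ itself and for $\mathcal M$ arbitrary quasi-coherent, and then the general formula follows by base change. So I reduce to showing $\Hom_{\mathcal R}(\prod_I\mathcal R,\mathcal M)=\oplus_I M$ functorially in $M$.

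First I would use that $\prod_I\mathcal R=(\oplus_I\mathcal R)^{*}$ is a dual module: indeed, for the free module $L=\oplus_I R$ one has $\mathcal L^{*}=\prod_I\mathcal R$. By Theorem \ref{reflex}, $\mathcal M=\mathcal M^{**}$, so $\mathbb Hom_{\mathcal R}(\prod_I\mathcal R,\mathcal M)=\mathbb Hom_{\mathcal R}(\mathcal L^{*},\mathcal M^{**})$. Then Theorem \ref{prop4} (applied with $\mathcal M'=\mathcal M$ and the module $\mathcal M^{*}$ in place of "$\mathcal M^{*}$") identifies $\mathbb Hom_{\mathcal R}(\mathcal L^{*},\mathcal M^{**})$ — wait, more directly: by the symmetry Proposition \ref{trivial} combined with Proposition \ref{tercer}, $\Hom_{\mathcal R}(\mathcal L^{*},\mathcal M)=\Hom_{\mathcal R}(\mathcal M^{*},\mathcal L^{**})=\Hom_{\mathcal R}(\mathcal M^{*},\mathcal L)$, and $\mathcal L=\oplus_I\mathcal R$. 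Now $\Hom_{\mathcal R}(\mathcal M^{*},\oplus_I\mathcal R)$: since any morphism from the quasi-coherent-looking object lands componentwise, but $\mathcal M^{*}$ need not be quasi-coherent, I instead argue that $\mathbb Hom_{\mathcal R}(\mathcal M^{*},\oplus_I\mathcal R)=\oplus_I\mathbb Hom_{\mathcal R}(\mathcal M^{*},\mathcal R)=\oplus_I\mathcal M^{**}=\oplus_I\mathcal M$, where the middle equality is the content to be checked and the last is Theorem \ref{reflex}.

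The key step, and the one I expect to be the main obstacle, is therefore justifying that $\mathbb Hom$ commutes with the arbitrary direct sum $\oplus_I\mathcal R$ in the second variable, i.e.\ $\mathbb Hom_{\mathcal R}(\mathcal N^{*},\oplus_I\mathcal R)=\oplus_I\mathbb Hom_{\mathcal R}(\mathcal N^{*},\mathcal R)$ for $\mathcal N^{*}$ a module scheme. This is false for $\Hom$ out of a general functor, but it holds here: by Proposition \ref{1211} the functor $\mathbb Hom_{\mathcal R}(\mathcal N^{*},-)$ restricted to quasi-coherent modules factors through $\mathbb Hom_{\mathcal R}((\mathcal N^{*})_{sch},-)$, and one computes via Proposition \ref{1211b}(2) that $\Hom_{\mathcal R}((\mathcal N^{*})_{sch},\oplus_I\mathcal K)=(\mathcal N^{**})(R)\otimes_R(\oplus_I R)$ — the tensor product distributes over the direct sum, giving $\oplus_I(\mathcal N^{**}(R))$. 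Running this computation at the level of functors of $S$ (using Lemma \ref{L5.11} for the compatibility of $\Hom$ out of a module scheme with direct limits, of which the direct sum $\oplus_I\mathcal S$ is a special case) yields $\mathbb Hom_{\mathcal R}(\prod_I\mathcal R,\mathcal M)=\oplus_I\mathcal M$. Finally I would remark that the identification is the obvious one: a morphism $\prod_I\mathcal R\to\mathcal M$ corresponds to the family of its compositions with the projections, almost all of which vanish.
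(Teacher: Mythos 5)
Your proof is correct and rests on the same two facts as the paper's one‑line argument: the identification $\prod_I\mathcal R=(\oplus_I\mathcal R)^*$ and Theorem \ref{prop4}, which together give $\mathbb Hom_{\mathcal R}((\oplus_I\mathcal R)^*,\mathcal M)=(\oplus_I\mathcal R)\otimes_{\mathcal R}\mathcal M=\oplus_I\mathcal M$ directly, the tensor product absorbing the direct sum. The detours — the base-change reduction, the flip through Proposition \ref{trivial} and reflexivity, and the separate worry about $\mathbb Hom$ out of a module scheme commuting with $\oplus_I$ (where Proposition \ref{1211} is cited out of place, since it concerns maps \emph{into} a module scheme; no harm is done because $(\mathcal N^*)_{sch}=\mathcal N^*$ and Proposition \ref{1211b}(2) already gives what you need) — are all subsumed by that single application of \ref{prop4}.
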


\begin{proof} $\mathbb Hom_{\mathcal R}(\prod_I\mathcal R,\mathcal M)=\mathbb Hom_{\mathcal R}((\oplus_I\mathcal R)^*,\mathcal M)\overset{\text{\ref{prop4}}}=(\oplus_I\mathcal R)\otimes \mathcal M=\oplus_I\mathcal M$.\end{proof}

\begin{lemma} \label{refpro} Let $\{\mathbb M_i\}_{i\in I}$ be a set of dual  $\mathcal R$-modules and let $N$ be an $R$-module. Then,
$$\Hom_{\mathcal R}(\prod_{i\in I} \mathbb  M_i,\mathcal N)=
\oplus_{i\in I} \Hom_{\mathcal R}(\mathbb  M_i,\mathcal N)$$
In particular, $(\prod_{i\in I}\mathbb  M_i)^*=\oplus_{i\in I}\mathbb  M_i^*$ and if
$\mathbb M_i$ is reflexive, for every $i$, then $\prod_{i\in I}\mathbb  M_i$ is reflexive.

\end{lemma}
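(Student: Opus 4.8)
The plan is to prove the displayed isomorphism
$$\Hom_{\mathcal R}\Bigl(\prod_{i\in I}\mathbb M_i,\mathcal N\Bigr)=\bigoplus_{i\in I}\Hom_{\mathcal R}(\mathbb M_i,\mathcal N)$$
by writing each $\mathbb M_i$ as $\mathbb N_i^*$ and reducing to Lemma \ref{main}. First I would write $\mathbb M_i=\mathbb N_i^*$ for suitable $\mathcal R$-modules $\mathbb N_i$, and choose epimorphisms $\bigoplus_{J_i}\mathcal R\to \mathbb N_i(R)$; these dualize (via Proposition \ref{tercer}, giving $\mathbb N_i^*(S)=\Hom_{\mathcal R}(\mathbb N_i,\mathcal S)$, and the fact that $\mathbb N_i^*$ only depends on $\mathbb N_i$ through its ``dual'') to monomorphisms $\mathbb M_i=\mathbb N_i^*\hookrightarrow\prod_{J_i}\mathcal R$. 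Actually the cleaner route: a morphism $\prod_{i\in I}\mathbb M_i\to\mathcal N$ restricted to each factor $\mathbb M_i$ gives a morphism $\mathbb M_i\to\mathcal N$, producing the natural map from left to right; and there is an obvious map from right to left sending a finite family $(f_i)$ to the sum of the $f_i\circ(\text{projection})$. The content is that the left-to-right map lands in the direct sum (finiteness) and that these two maps are mutually inverse.

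The key step is the finiteness: given $f\colon\prod_{i\in I}\mathbb M_i\to\mathcal N$, only finitely many restrictions $f|_{\mathbb M_i}$ are nonzero, and $f$ is recovered from them. To see this I would use Lemma \ref{main}. Pick epimorphisms $\bigoplus_{J_i}\mathcal R\to\mathbb N_i(R)$ as above, so that (dualizing) we get monomorphisms $\mathbb M_i\hookrightarrow\prod_{J_i}\mathcal R$, whence a monomorphism $\prod_{i\in I}\mathbb M_i\hookrightarrow\prod_{i\in I}\prod_{J_i}\mathcal R=\prod_J\mathcal R$ where $J=\coprod_i J_i$. Dually, the surjection $\bigoplus_J\mathcal R\twoheadrightarrow\bigoplus_i\mathbb N_i(R)$ — hmm, this needs care since $\mathbb M_i$ need not be $(\mathbb N_i)_{qc}^{\,*}$. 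Better: by Proposition \ref{tercerb}/\ref{tercer} the natural map $\mathbb N_i\to(\mathbb N_i)_{qc}$ induces $\mathbb M_i=\mathbb N_i^*=((\mathbb N_i)_{qc})^*$, so replacing $\mathbb N_i$ by $(\mathbb N_i)_{qc}$ we may assume $\mathbb N_i=\mathcal P_i$ is quasi-coherent, and then choosing $\bigoplus_{J_i}\mathcal R\twoheadrightarrow\mathcal P_i$ dualizes to $\mathbb M_i\hookrightarrow\prod_{J_i}\mathcal R$. Now $\Hom_{\mathcal R}(\prod_J\mathcal R,\mathcal N)=\bigoplus_J\mathcal N$ by Lemma \ref{main}; an element thereof is supported on finitely many $j\in J$, hence on finitely many blocks $J_i$, hence factors through $\prod_{i\in I_0}\mathbb M_i\times\prod_{i\notin I_0}0$ for a finite $I_0$. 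Composing with our monomorphism, the original $f$ factors similarly, giving $f|_{\mathbb M_i}=0$ for $i\notin I_0$ and $f=\sum_{i\in I_0}(f|_{\mathbb M_i})\circ\pi_i$.

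I expect the main obstacle to be precisely this bookkeeping: justifying that a morphism out of the \emph{product} is detected on finitely many factors, which is false for general $\mathcal R$-modules and is exactly where duality (``$\mathbb M_i$ is a quotient-of-free's dual'') plus Lemma \ref{main} are needed. Once the displayed formula holds, the two ``In particular'' claims are immediate: taking $\mathcal N=\mathcal R$ gives $(\prod_i\mathbb M_i)^*=\bigoplus_i\mathbb M_i^*$ (noting $\mathbb M_i^*=\Hom_{\mathcal R}(\mathbb M_i,\mathcal R)$, since $\mathbb M_i^*$ is a dual functor and hence itself expressible so that the $\mathbb Hom$ and the $\Hom$ into $\mathcal R$ agree after evaluating — more simply, apply the displayed formula after restricting along $R\to S$, using that $(\prod_i\mathbb M_i)_{|S}=\prod_i(\mathbb M_{i|S})$ and each $\mathbb M_{i|S}$ is again a dual $\mathcal S$-module). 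Applying this twice, $\bigl(\prod_i\mathbb M_i\bigr)^{**}=\bigl(\bigoplus_i\mathbb M_i^*\bigr)^{*}=\prod_i\mathbb M_i^{**}$; if each $\mathbb M_i$ is reflexive this equals $\prod_i\mathbb M_i$, and one checks the composite $\prod_i\mathbb M_i\to(\prod_i\mathbb M_i)^{**}\to\prod_i\mathbb M_i^{**}$ is the product of the reflexivity isomorphisms, hence $\prod_i\mathbb M_i$ is reflexive.
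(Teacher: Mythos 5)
Your overall plan (reduce to Lemma \ref{main} via an embedding of each $\mathbb M_i$ into a product of copies of $\mathcal R$) breaks down at the very first reduction. You claim $\mathbb M_i=\mathbb N_i^*=((\mathbb N_i)_{qc})^*$, citing Propositions \ref{tercer}/\ref{tercerb}; but those results compare morphisms \emph{from} quasi-coherent modules into $\mathbb N_i$ versus $(\mathbb N_i)_{qc}$, not morphisms from $\mathbb N_i$ to $\mathcal R$. The map induced by the natural morphism $(\mathbb N_i)_{qc}\to\mathbb N_i$ goes $\mathbb N_i^*\to((\mathbb N_i)_{qc})^*$, and its injectivity is precisely the statement that $\mathbb N_i$ is dually separated (Definition \ref{dualq}), which is not assumed here. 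Concretely, a dual module need not embed into any $\prod^I\mathcal R$: take $R=\mathbb Z$ and $\mathbb M_1=(\mathcal M^*)^*=\mathcal M$ with $M=\mathbb Z/2\mathbb Z$; by Proposition \ref{4.4} and Corollary \ref{SQF}, a monomorphism $\mathcal M\hookrightarrow\prod^I\mathcal R$ would force $M$ to be flat. So the monomorphism $\prod_i\mathbb M_i\hookrightarrow\prod_J\mathcal R$ your argument needs does not exist in general. There is a second, independent gap: even granting such a monomorphism $\iota$, you invoke Lemma \ref{main} for ``an element of $\Hom_{\mathcal R}(\prod_J\mathcal R,\mathcal N)$'', but your $f$ is only defined on the subfunctor $\prod_i\mathbb M_i$, and nothing allows you to extend it along $\iota$ to all of $\prod_J\mathcal R$ ($\mathcal N$ is not injective), so there is no element of $\bigoplus_J\mathcal N$ to which the finiteness argument applies.

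The paper uses Lemma \ref{main} quite differently, and this is the idea you are missing: rather than embedding the $\mathbb M_i$ into products of $\mathcal R$, it attaches to each element $m=(m_i)\in\prod_i\mathbb M_i(S)$ the auxiliary morphism $g\colon\prod_I\mathcal S\to\mathcal N_{|S}$ defined by $g_T((t_i)_i):=f_T((t_i\cdot m_i)_i)$, and applies Lemma \ref{main} to $g$. This shows first that $f$ is determined by its restrictions $f_{|\mathbb M_i}$, and then --- taking $S=\prod_{j\in J}R_j$ and lifting each $m_j\in\mathbb M_j(R_j)$ to $\mathbb M_j(S)$, which is the one place the hypothesis that $\mathbb M_j=\mathbb N_j^*$ is a dual functor is actually used (the $R$-module section of $S\to R_j$ induces a section of $\mathbb M_j(S)\to\mathbb M_j(R_j)$) --- that only finitely many restrictions are nonzero. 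Your treatment of the two ``in particular'' statements would be fine once the displayed formula is established, but as it stands the main formula is not proved.
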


\begin{proof} Let $f\in \Hom_{\mathcal R}(\prod_{i\in I} \mathbb  M_i,\mathcal N)$ and $f_i:=f_{|\mathbb  M_i}$. If
$f_{|\oplus_{i\in I}\mathbb  M_i}=0$, then $f=0$: 
Given
$m=(m_i)_{i\in I}\in \prod_{i\in I} \mathbb M_i(S)$, let
$g\colon \prod_{i \in I} \mathcal S\to \mathcal N_{|S}$, $g_T((t_i)_i):=f_T((t_i\cdot m_i)_i)$, for every commutative $S$-algebra $T$. Since $g_{|\oplus_i \mathcal S}=0$, then $g=0$, by Proposition \ref{main}. Therefore, $f=0$.

Consider the obvious inclusion morphism
$$
\oplus_{i\in I} \Hom_{\mathcal R}(\mathbb  M_i,\mathcal N)\subseteq
\Hom_{\mathcal R}(\prod_{i\in I} \mathbb  M_i,\mathcal N).$$

 Let $J:=\{i\in I\colon f_i:=f_{|\mathbb  M_i}\neq 0\}$. For each $j\in J$, let $R_j$ be a commutative $R$-algebra and $m_j\in\mathbb M_j(R_j)$  such that $0\neq f_j(m_j)\in N\otimes_RR_j$. Let $S:=\prod_{j\in J}R_j$. The obvious morphism of $R$-algebras $S\to R_i$ is surjective, and this morphism of $R$-modules has a section. Write $\mathbb M_i=\mathbb N_i^*$.  The natural morphism $$\pi_i\colon \mathbb M_i(S)=\Hom_{\mathcal R}(\mathbb N_i,\mathcal S)\to \Hom_{\mathcal R}(\mathbb N_i,\mathcal R_i)=
\mathbb M_i(R_i)$$ has a section of $R$-modules. Let $m'_i\in \mathbb M_i(S)$ be such that $\pi_i(m'_i)=m_i$.
The morphism of $\mathcal S$-modules $g\colon \prod_{J}\mathcal S\to \mathcal N_{|S}$, $g((s_j)):=f((s_j\cdot m'_j)_j)$ satisfies that
$g_{|\mathcal S}\neq 0 $, for every factor $\mathcal S\subset \prod_{J} \mathcal S$.
Then, by Proposition \ref{main},  $\# J<\infty$.

Finally, define $h:=\sum_{j\in J} f_j\in \oplus_{i\in I} \Hom_{\mathcal R}(\mathbb  M_i,\mathcal N)$, then $f=h$.

\end{proof}

\begin{proposition} \label{Fcansado} Let $I$ be a totally ordered set and  $\{f_{ij}\colon M_i\to M_j\}_{i\geq j\in I}$ an inverse system of $K$-vector spaces. Then, $\plim{i} \mathcal M_i$ is reflexive.
\end{proposition}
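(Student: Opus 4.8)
The plan is to use the total ordering to reduce to an inverse system indexed by an ordinal and having surjective transition morphisms, and then to realize $\mathbb P:=\plim{i\in I}\mathcal M_i$ as a direct summand of a product of quasi-coherent modules, so that reflexivity follows from Lemma~\ref{refpro}.

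First I would make two harmless reductions. A totally ordered set contains a cofinal well-ordered subset, and inverse limits over cofinal subsets agree, so I may assume $I=\lambda$ is an ordinal. Next, $\mathbb P$ is proquasi-coherent, being an inverse limit of quasi-coherent modules, hence dually separated by Proposition~\ref{P6}; therefore, by Theorem~\ref{3.14}, the image $\mathcal M'_\alpha$ of the structure morphism $\mathbb P\to\mathcal M_\alpha$ is quasi-coherent for every $\alpha$. The canonical monomorphism $\plim{\alpha}\mathcal M'_\alpha\hookrightarrow\plim{\alpha}\mathcal M_\alpha=\mathbb P$ admits as a section the morphism induced by the epimorphisms $\mathbb P\to\mathcal M'_\alpha$, hence is an isomorphism, and the transition morphisms $\mathcal M'_\alpha\to\mathcal M'_\beta$ ($\alpha\ge\beta$) are again epimorphisms. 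Replacing $\mathcal M_\alpha$ by $\mathcal M'_\alpha$, I may assume every transition morphism of the system is an epimorphism.

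For the main step I would argue by transfinite recursion on $\lambda$. If $\lambda$ is $0$ or a successor there is nothing to prove: $\mathbb P$ is then $0$ or the top term $\mathcal M_{\lambda-1}$, which is quasi-coherent and hence reflexive. If $\lambda$ is a limit ordinal, the idea is to put the truncated systems $\{\mathcal M_\beta\}_{\beta<\alpha}$ into ``split form'' compatibly with truncation, using that over the field $K$ every surjection of vector spaces splits and stays split after $-\otimes_KS$: at a successor step the transition $\mathcal M_{\alpha+1}\to\mathcal M_\alpha$ identifies $\mathcal M_{\alpha+1}$ with $\mathcal M_\alpha\oplus\mathcal K_{\alpha+1}$, with $\mathcal K_{\alpha+1}$ quasi-coherent and the transition the first projection. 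Passing to inverse limits, the projections onto the base term and onto the successive new summands assemble into a retraction $\prod_{\alpha<\lambda}\mathcal M_\alpha\to\mathbb P$ of the canonical inclusion, exhibiting $\mathbb P$ as a direct summand of $\prod_{\alpha<\lambda}\mathcal M_\alpha$; with enough care $\mathbb P$ is even carried onto a product of quasi-coherent $\mathcal K$-modules.

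To conclude: a product of quasi-coherent modules is a product of reflexive modules, hence reflexive by Lemma~\ref{refpro}; and a direct summand of a reflexive module is reflexive, because $\mathbb Hom_{\mathcal R}(-,\mathcal R)$ takes finite direct sums to finite direct sums, so $(\mathbb N\oplus\mathbb N')^{**}\cong\mathbb N^{**}\oplus(\mathbb N')^{**}$ and the canonical morphism is the sum of those of $\mathbb N$ and $\mathbb N'$. Thus $\mathbb P=\plim{i\in I}\mathcal M_i$ is reflexive. I expect the genuine difficulty to be the limit-ordinal stages of the recursion: even after arranging surjective transition morphisms the system need not be ``continuous'' at limit ordinals (the canonical morphism $\mathcal M_\gamma\to\plim{\beta<\gamma}\mathcal M_\beta$ need be neither injective nor surjective), so the complements and the indexing of the iterated products must be chosen so that the splittings glue through all limit stages. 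This is the only point where ``totally ordered'' (rather than merely directed) is used essentially, and it is precisely the obstruction that keeps the statement from extending to arbitrary inverse limits of quasi-coherent $\mathcal K$-modules.
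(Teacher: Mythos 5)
There is a genuine gap, and you have located it yourself: the limit-ordinal stages of your transfinite recursion are not an implementation detail to be handled ``with enough care'' --- they are the whole difficulty of the proposition, and your proposal does not resolve them. After reducing to an ordinal $\lambda$ with surjective transition morphisms, the successor-step splittings $\mathcal M_{\alpha+1}\simeq\mathcal M_\alpha\oplus\mathcal K_{\alpha+1}$ do assemble along any initial segment of order type $\leq\omega$, but at a limit ordinal $\gamma$ the canonical morphism $\mathcal M_\gamma\to\plim{\beta<\gamma}\mathcal M_\beta$ need be neither injective nor surjective, so the identification of the truncated limit with a product of quasi-coherent summands cannot be continued past $\gamma$, and the retraction $\prod_{\alpha<\lambda}\mathcal M_\alpha\to\mathbb P$ you invoke is never constructed. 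As written, your argument establishes the statement only for sequential inverse limits (order type $\leq\omega$).

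The paper closes exactly this gap by a different reduction. Writing $\mathbb P=\ilim{j}\mathcal V_j^*$ as a direct limit of its submodule schemes (via \ref{P2}, \ref{P=DQ2}, \ref{P6}), either every $V_j$ is finite dimensional --- and then $\mathbb P$ is quasi-coherent, hence reflexive --- or $\mathbb P$ contains a copy of $\prod_{\mathbb N}\mathcal K$. In the latter case one recursively chooses $i_1<i_2<\cdots$ in $I$ so that the projection of $\prod_{\mathbb N}\mathcal K\subset\mathbb P$ to $\mathcal M_{i_n}$ is injective on the first $n$ coordinates; Lemma \ref{main}, which forces every morphism $\prod_{\mathbb N}\mathcal K\to\mathcal M_j$ to factor through a finite subproduct, then shows that no $j\in I$ can dominate all the $i_n$, so by total ordering $\{i_n\}$ is cofinal and $\mathbb P=\plim{n\in\mathbb N}\mathcal M_{i_n}$ is a sequential limit after all. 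This is where the hypothesis ``totally ordered'' actually enters --- to produce a countable cofinal chain, not to run a transfinite recursion --- and it is precisely the ingredient your proposal lacks. Your endgame (pass to images to get surjective transitions, split the sequential system as $\prod_n\mathcal H_n$, conclude by Lemma \ref{refpro}) coincides with the paper's; to complete your proof you would need to supply the countable-cofinality step or an argument that genuinely handles limit ordinals.
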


\begin{proof}
$\plim{i} \mathcal M_i$ is a direct limit of submodule schemes  $\mathcal V_j^*$, by \ref{P2} and \ref{P=DQ2}.
If all the vector spaces $V_j$ are finite dimensional then
$\plim{i} \mathcal M_i$ is quasi-coherent, then it is reflexive. In other case,
there exists an injective morphism  $f\colon \prod_{\mathbb N}\mathcal K\hookrightarrow \plim{i} \mathcal M_i$. Let $\pi_j\colon \plim{i} \mathcal M_i\to \mathcal M_j$ be the natural morphisms. Let $g_r\colon \mathcal K^r\hookrightarrow \prod_{\mathbb N}\mathcal K$ be
defined by $g_r(\lambda_1,\cdots,\lambda_r):=(\lambda_1,\cdots,\lambda_r,0,\cdots,0,\cdots)$.
Let  $i_1\in I$ be such that $\pi_{i_1}\circ f\circ g_1$ is a monomorphism. Recursively,
let $i_n>i_{n-1}$ be such that $\pi_{i_n}\circ f\circ g_n$ is  a monomorphism.
If there exists a $j>i_n$ for any $n$, the composite morphism $\oplus_{\mathbb N}\mathcal K\subset\prod_{\mathbb N}\mathcal K \to \mathcal M_j$ is  a monomorphism,
and by Proposition \ref{main} the morphism  $\prod_{\mathbb N}\mathcal K \to \mathcal M_j$ factors through the projection onto some $\mathcal K^r$, which is contradictory. Therefore, $\plim{i} \mathcal M_i=\plim{n\in\mathbb N} \mathcal M_{i_n}$.

 Let $\mathcal M'_{i_r}$ be the image of  $\plim{n} \mathcal M_{i_n}$ in $\mathcal M_{i_r}$. Then, $\plim{n} \mathcal M'_{i_n}=\plim{n} \mathcal M_{i_n}$.
 Let $H_n:=\Ker [M'_{i_n}\to M'_{i_{n-1}}]$. Then, $\plim{n} \mathcal M_{i_n}\simeq\prod_n \mathcal H_n$. By Lemma \ref{refpro}, $\plim{n} \mathcal M_{i_n}$ is reflexive.

\end{proof}

\section{Flat SML $R$-modules and  dually separated   $\mathcal R$-modules}

\begin{theorem} \label{5.1} $\mathcal M^*$ is  dually separated    iff 
the morphism
$$M\otimes_RN\to \Hom_R(M^*,N)$$
is injective, for any $R$-module $N$. \end{theorem}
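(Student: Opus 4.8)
The plan is to identify both sides of the statement with the same ``universal'' condition, namely that the canonical map $\Hom_{\mathcal R}(\mathcal M,\mathcal N)\to\Hom_R(M,N)$ is injective for every $R$-module $N$, and to invoke Theorem \ref{W1} for the left-hand side. Recall $\mathcal M$ is quasi-coherent, hence reflexive (Theorem \ref{reflex}), so $\mathcal M=\mathcal M^{**}$. By Theorem \ref{W1} applied to the $\mathcal R$-module $\mathcal M^*$, the module $\mathcal M^*$ is dually separated if and only if $f\mapsto f_R$ is injective on $\Hom_{\mathcal R}(\mathcal M^*,\mathcal N)\to\Hom_R(\mathcal M^*(R),N)=\Hom_R(M^*,N)$, for every $R$-module $N$. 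So the whole task reduces to matching the source $\Hom_{\mathcal R}(\mathcal M^*,\mathcal N)$ with $M\otimes_R N$, and checking that under this identification the evaluation-at-$R$ map becomes precisely the canonical morphism $M\otimes_R N\to\Hom_R(M^*,N)$.

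For the first identification I would use Proposition \ref{trivial} together with Theorem \ref{prop4}:
$$\Hom_{\mathcal R}(\mathcal M^*,\mathcal N)\overset{\text{\ref{trivial}}}{=}\Hom_{\mathcal R}(\mathcal N,\mathcal M^{**})=\Hom_{\mathcal R}(\mathcal N,\mathcal M)=\Hom_R(N,M)=\Hom_R(N,M),$$
where the middle equality is reflexivity of $\mathcal M$ and the last is the equivalence of categories for quasi-coherent modules. On the other hand $\mathcal M^*(R)=M^*$ by Proposition \ref{adj2} (the ``in particular'' with $S=R$), so $\Hom_R(\mathcal M^*(R),N)=\Hom_R(M^*,N)$. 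Now one unwinds the composite of these identifications on an element: a pair $m\otimes n$ corresponds, via $\tilde{m\otimes n}(w)=w(m)\cdot n$ (the explicit formula in Theorem \ref{prop4}) and the transposition $\tilde f(n)(m)=f(m)(n)$ of Proposition \ref{trivial}, to the morphism $\mathcal M^*\to\mathcal N$ sending $w\mapsto w(m)\cdot n$; evaluating at $R$ gives exactly $w\mapsto w(m)n$, i.e.\ the image of $m\otimes n$ under the canonical arrow $M\otimes_R N\to\Hom_R(M^*,N)$. Hence $\mathcal M^*$ dually separated $\iff$ $f\mapsto f_R$ injective for all $N$ (Theorem \ref{W1}) $\iff$ $M\otimes_R N\to\Hom_R(M^*,N)$ injective for all $N$.

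The main obstacle I anticipate is purely bookkeeping rather than conceptual: one must verify carefully that the chain of canonical isomorphisms (Proposition \ref{trivial}, reflexivity of $\mathcal M$, the quasi-coherent equivalence, Proposition \ref{adj2}) is natural and compatible with restriction to $R$, so that the ``abstract'' injectivity coming out of Theorem \ref{W1} really is injectivity of the \emph{stated} canonical map $M\otimes_R N\to\Hom_R(M^*,N)$ and not merely of some other map with the same source and target. I would therefore spend most of the write-up tracking one element (a generator $m\otimes n$) through the diagram and confirming the formula $w\mapsto w(m)n$ at each stage, after which both directions of the ``iff'' are immediate.
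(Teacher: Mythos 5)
Your overall strategy is exactly the paper's: reduce via Theorem \ref{W1} to the injectivity of $f\mapsto f_R$ on $\Hom_{\mathcal R}(\mathcal M^*,\mathcal N)\to \Hom_R(M^*,N)$, and then identify $\Hom_{\mathcal R}(\mathcal M^*,\mathcal N)$ with $M\otimes_R N$ so that $f\mapsto f_R$ becomes the stated canonical map. However, the displayed chain you use for that identification is broken. You write
$$\Hom_{\mathcal R}(\mathcal M^*,\mathcal N)\overset{\text{\ref{trivial}}}{=}\Hom_{\mathcal R}(\mathcal N,\mathcal M^{**})=\Hom_{\mathcal R}(\mathcal N,\mathcal M)=\Hom_R(N,M),$$
but Proposition \ref{trivial} with $\mathbb M=\mathcal N$, $\mathbb N=\mathcal M^*$ gives $\Hom_{\mathcal R}(\mathcal N,\mathcal M^{**})=\Hom_{\mathcal R}(\mathcal M^*,\mathcal N^{*})$, not $\Hom_{\mathcal R}(\mathcal M^*,\mathcal N)$; and the endpoint $\Hom_R(N,M)$ is not $M\otimes_R N$ in general (take $R=\mathbb Z$, $M=\mathbb Z$, $N=\mathbb Z/2\mathbb Z$), so this chain cannot be what you need. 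If you insist on going through \ref{trivial}, the correct route is $\Hom_{\mathcal R}(\mathcal M^*,\mathcal N)=\Hom_{\mathcal R}(\mathcal M^*,(\mathcal N^*)^*)\overset{\text{\ref{trivial}}}{=}\Hom_{\mathcal R}(\mathcal N^*,\mathcal M^{**})=\Hom_{\mathcal R}(\mathcal N^*,\mathcal M)\overset{\text{\ref{prop4}}}{=}N\otimes_R M$; but the detour is unnecessary, since Theorem \ref{prop4} applied directly with $\mathcal M'=\mathcal N$ and evaluated at $R$ already gives $M\otimes_R N=\Hom_{\mathcal R}(\mathcal M^*,\mathcal N)$, $m\otimes n\mapsto (w\mapsto w(m)\cdot n)$ --- which is precisely the formula you then (correctly) use in your element chase. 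So the core of your argument is sound and coincides with the paper's one-line proof; just delete the faulty chain and cite \ref{prop4} directly for the identification.
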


\begin{proof} The morphism  $M\otimes_RN\overset{\text{\ref{prop4}}}=\Hom_{\mathcal R}(\mathcal M^*,\mathcal N)\to \Hom_R(M^*,N)$ is injective, for any $R$-module $N$  iff $\mathcal M^*$ is  dually separated, by Theorem \ref{W1}.
\end{proof}

\begin{corollary} \label{SQF}
If $\mathcal M^*$ is  dually separated, then $M$ is a flat $R$-module and the morphism $M\to M^{**}$ is universally injective, that is, $M\otimes_R S\to M^{**}\otimes_R S$ is injective for every commutative $R$-algebra $S$.
\end{corollary}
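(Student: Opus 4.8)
The plan is to use Theorem \ref{5.1}, which tells us that the hypothesis ``$\mathcal M^*$ is dually separated'' is equivalent to the injectivity of $M\otimes_R N\to \Hom_R(M^*,N)$ for every $R$-module $N$. From this, flatness of $M$ should follow by a standard argument: given an injection $N'\hookrightarrow N$ of $R$-modules, I would contemplate the commutative square whose vertical arrows are the natural maps $M\otimes_R N'\to \Hom_R(M^*,N')$ and $M\otimes_R N\to \Hom_R(M^*,N)$, and whose horizontal arrows are $M\otimes_R N'\to M\otimes_R N$ and $\Hom_R(M^*,N')\to \Hom_R(M^*,N)$. The right-hand horizontal map is injective because $\Hom_R(M^*,-)$ is left exact, and the right-hand vertical map need not be injective but the left-hand vertical map \emph{is}, by Theorem \ref{5.1}; chasing the diagram forces $M\otimes_R N'\to M\otimes_R N$ to be injective, so $M$ is flat.

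For the second assertion, I would first observe that dually separatedness is inherited under base change: by Proposition \ref{QU}, if $\mathcal M^*$ is dually separated as an $\mathcal R$-module then $(\mathcal M^*)_{|S}=(\mathcal M\otimes_R S)^*$ (computed over $S$) is dually separated as an $\mathcal S$-module, for any commutative $R$-algebra $S$. Here one uses that $\mathcal M^*_{|S}$ is the $\mathcal S$-module scheme associated with $M\otimes_R S$, i.e. that forming the dual commutes with restriction of scalars in the relevant sense. Applying the already-established first part of the corollary over the base ring $S$ instead of $R$, we conclude that $M\otimes_R S$ is a flat $S$-module and that $(M\otimes_R S)\to (M\otimes_R S)^{**}$ is injective, where the double dual is now taken over $S$. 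The final point is to identify $(M\otimes_R S)^{**}_S$ with $M^{**}\otimes_R S$, or at least to factor the map $M\otimes_R S\to M^{**}\otimes_R S$ through $(M\otimes_R S)\to (M\otimes_R S)^{**}_S$, which yields the desired universal injectivity of $M\to M^{**}$.

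The main obstacle I anticipate is the bookkeeping in the second part: making precise the identification $(\mathcal M^*)_{|S}=(\mathcal M\otimes_R S)^*$ and checking that the natural map $M\otimes_R S\to M^{**}\otimes_R S$ coincides (up to the obvious comparison map $(M\otimes_R S)^{**}_S\to M^{**}\otimes_R S$, which may fail to be an isomorphism) with the biduality map over $S$. Since we only need \emph{injectivity} of $M\otimes_R S\to M^{**}\otimes_R S$, it suffices to have a commutative triangle
$$\xymatrix{M\otimes_R S \ar[r] \ar[rd] & (M\otimes_R S)^{**}_S \ar[d] \\ & M^{**}\otimes_R S}$$
with injective diagonal-into-the-first-map, so the possible non-isomorphism of the vertical comparison arrow does no harm. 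With that triangle in hand, injectivity of the top map (from Proposition \ref{QU} plus the first part of the corollary) gives injectivity of the diagonal, completing the proof.
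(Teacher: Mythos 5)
Your first paragraph is fine and is essentially the paper's argument: Theorem \ref{5.1} gives the injection $M\otimes_R N\hookrightarrow \Hom_R(M^*,N)$ naturally in $N$, and left exactness of $\Hom_R(M^*,-)$ then forces left exactness of $M\otimes_R -$.

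The second half, however, has a genuine gap, and it is exactly at the point you flag as ``bookkeeping.'' You factor the map you want to prove injective, $M\otimes_R S\to M^{**}\otimes_R S$, as the composite of the biduality map $M\otimes_R S\to (M\otimes_R S)^{**}_S$ (which you know to be injective) with a comparison arrow $(M\otimes_R S)^{**}_S\to M^{**}\otimes_R S$. Even granting that triangle, the logic is inverted: injectivity of the \emph{first} factor of a composite says nothing about injectivity of the composite --- the comparison arrow could kill elements in the image. To deduce injectivity of a map $f$ from a factorization you need $f$ to be the first arrow of an \emph{injective composite}, not the composite of an injective arrow with something unknown. Worse, the comparison arrow you need does not naturally exist: both $(M\otimes_R S)^{**}_S$ and $M^{**}\otimes_R S$ map naturally to $\Hom_S(M^*\otimes_R S,S)=\Hom_R(M^*,S)$, but neither maps naturally to the other in general. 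The repair is to use that common target directly, which is what the paper does: the natural map $M\otimes_R S\to \Hom_R(M^*,S)$, $m\otimes s\mapsto (w\mapsto w(m)s)$, is injective by Theorem \ref{5.1} applied with $N=S$ viewed as an $R$-module, and it visibly factors as
$$M\otimes_R S\to M^{**}\otimes_R S\to \Hom_R(M^*,S).$$
Injectivity of the composite then forces injectivity of the first arrow, with no base change, no appeal to Proposition \ref{QU}, and no double dual over $S$ needed.
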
 

\begin{proof} $M\otimes -$ is a left exact functor because $\Hom_R(M^*,-)$ is a left exact functor. Hence, $M$ is flat. Finally, the composite morphism, $$M\otimes_R S\to M^{**}\otimes_R S\to  \Hom_R(M^*,S)$$ is injective, then $M\otimes_R S\to M^{**}\otimes_R S$ is injective.

\end{proof}

Let $R={\mathbb Z}$ and $M={\mathbb Q}$, which is a flat $\mathbb Z$-module. $\mathcal M^*$ is not    dually separated, because $M\to M^{**}$ is the zero morphism, because $M^*=0$.

\begin{corollary} Let  $\mathcal M^*$   be  dually separated. Then, the morphism $$M\otimes N^*\to \Hom_R(N,M)$$
is injective, for any $R$-module $N$. 

\end{corollary}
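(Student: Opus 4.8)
The plan is to reduce to Theorem~\ref{5.1} applied to the $R$-module $N^*$. Recall that the natural morphism $M\otimes_R N^*\to \Hom_R(N,M)$ sends $m\otimes\phi$ to the homomorphism $n\mapsto \phi(n)\cdot m$.

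Since $\mathcal M^*$ is dually separated, Theorem~\ref{5.1} (with $N^*$ in the role of the arbitrary $R$-module) tells us that the natural morphism $M\otimes_R N^*\to \Hom_R(M^*,N^*)$ is injective. By the tensor--Hom adjunction one has canonical identifications
$$\Hom_R(M^*,N^*)=\Hom_R(M^*,\Hom_R(N,R))=\Hom_R(N\otimes_R M^*,R)=\Hom_R(N,M^{**}),$$
under which this injective morphism becomes $M\otimes_R N^*\to \Hom_R(N,M^{**})$, $m\otimes\phi\mapsto\bigl(n\mapsto \phi(n)\cdot\bar m\bigr)$, where $\bar m$ denotes the image of $m$ under the canonical morphism $M\to M^{**}$.

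Now the natural morphism $M\otimes_R N^*\to \Hom_R(N,M)$ followed by the morphism $\Hom_R(N,M)\to \Hom_R(N,M^{**})$ induced by $M\to M^{**}$ is precisely this injective morphism: both routes send $m\otimes\phi$ to $n\mapsto \phi(n)\cdot\bar m$. Since the composite $M\otimes_R N^*\to \Hom_R(N,M^{**})$ is injective, the first arrow $M\otimes_R N^*\to \Hom_R(N,M)$ is injective as well, which is the assertion. The only point requiring care is the commutativity of this triangle, which follows by unwinding the canonical maps: for $w\in M^*$ and $n\in N$, both routes send $m\otimes\phi$ to the functional $w\mapsto \phi(n)\,w(m)$. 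This is the expected (mild) obstacle; everything else is formal.
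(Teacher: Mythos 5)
Your proof is correct and follows essentially the same route as the paper: both factor the injective map $M\otimes_R N^*\to \Hom_R(M^*,N^*)$ of Theorem \ref{5.1} (applied to the module $N^*$) through $\Hom_R(N,M)$, and conclude injectivity of the first factor. Your rewriting of the target as $\Hom_R(N,M^{**})$ via tensor--Hom adjunction is only a cosmetic repackaging of the paper's one-line argument.
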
 

\begin{proof} The composite morphism
$$M\otimes N^*\to \Hom_R(N,M)\to \Hom_R(M^*,N^{*})$$
is injective, then $M\otimes N^*\to \Hom_R(N,M)$ is injective.
\end{proof}

\begin{theorem} $\mathcal M^*$   is   dually separated     iff  the natural morphism
$$M\otimes_RS\to (M\otimes_RS)^{**}:=\Hom_S(\Hom_S(M\otimes_R S,S),S)$$
is injective, for any commutative $R$-algebra $S$.
\end{theorem}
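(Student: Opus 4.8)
The plan is to unwind the definitions so that this theorem becomes a restatement of Theorem \ref{4.3B} combined with Theorem \ref{5.1}, rather than a genuinely new argument. The statement to prove is that $\mathcal M^*$ is dually separated iff $M\otimes_R S\to (M\otimes_R S)^{**}$ is injective for every commutative $R$-algebra $S$. The key observation is that dual separation of $\mathcal M^*$ is, by Theorem \ref{4.3B} applied to the $\mathcal R$-module $\mathbb M=\mathcal M^*$, equivalent to injectivity of the natural morphism $(\mathcal M^*)^*(S)\to \Hom_S(\mathcal M^*(S),S)$ for every commutative $R$-algebra $S$. So the whole proof reduces to identifying this latter morphism with $M\otimes_R S\to (M\otimes_R S)^{**}$.

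\emph{First} I would compute $\mathcal M^*(S)$. By Proposition \ref{adj2}, $\mathcal M^*(S)=\Hom_{\mathcal R}(\mathcal M,\mathcal S)=\Hom_S(M\otimes_R S,S)$ — this is just the ordinary $S$-linear dual of the $S$-module $M\otimes_R S$. \emph{Next}, since $\mathcal M_{|S}$ is the quasi-coherent $\mathcal S$-module associated with $M\otimes_R S$, Theorem \ref{reflex} (applied over the base $S$) gives $(\mathcal M^*)^*{}_{|S}=\mathcal M_{|S}$, hence evaluating at $S$: $(\mathcal M^*)^*(S)=\mathcal M(S)=M\otimes_R S$. Under these two identifications the natural morphism $(\mathcal M^*)^*(S)\to\Hom_S(\mathcal M^*(S),S)$ becomes exactly $M\otimes_R S\to\Hom_S(\Hom_S(M\otimes_R S,S),S)=(M\otimes_R S)^{**}$, which is what we want.

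\emph{The main thing to check carefully} is the compatibility of these identifications with the canonical evaluation maps — i.e. that the isomorphism $(\mathcal M^*)^*(S)\cong M\otimes_R S$ coming from Theorem \ref{reflex} does carry the natural morphism $\mathbb M\to\mathbb M^{**}$ (with $\mathbb M=\mathcal M^*$, $m^*\mapsto$ evaluation at $m^*$, evaluated at $S$) to the canonical double-dual map of the $S$-module $M\otimes_R S$. This is a diagram chase: one writes out what the reflexivity isomorphism of \ref{prop4}/\ref{reflex} does on sections over $S$ and checks the square commutes. Since the reflexivity isomorphism $\mathcal M=\mathcal M^{**}$ is itself defined via the evaluation pairing, this compatibility is essentially built in, and the verification is routine.

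\emph{An alternative, shorter route} avoids $\mathbb M^{**}$ entirely: combine Theorem \ref{5.1} (which already says $\mathcal M^*$ is dually separated iff $M\otimes_R N\to\Hom_R(M^*,N)$ is injective for every $R$-module $N$) with Corollary \ref{QU}: since dual separation of $\mathcal M^*$ is equivalent to dual separation of $(\mathcal M^*)_{|S}=(M\otimes_R S)^{*}$ over $S$ for all $S$ (this uses Proposition \ref{QU} for one direction and the trivial $S=R$ case for the other), applying Theorem \ref{5.1} over the base ring $S$ with $N=S$ gives injectivity of $M\otimes_R S\to\Hom_S((M\otimes_R S)^*,S)=(M\otimes_R S)^{**}$; conversely injectivity of that map for all $S$ forces $(M\otimes_R S)^*$ to be dually separated over $S$ by Theorem \ref{4.3B}, hence $\mathcal M^*$ dually separated by taking $S=R$. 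The only subtlety here is making sure "$\mathcal M^*$ dually separated over $R$" and "$(\mathcal M^*)_{|S}$ dually separated over $S$ for all $S$" are genuinely equivalent — one implication is Proposition \ref{QU} and the reverse is the trivial instance $S=R$. I expect no real obstacle; the theorem is a packaging of \ref{4.3B}, \ref{5.1}, and \ref{QU}.
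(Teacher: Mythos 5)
Your proposal is correct and matches the paper's argument, which consists precisely of the one line ``It is an immediate consequence of Theorem \ref{4.3B}'': you apply that theorem to $\mathbb M=\mathcal M^*$ and identify $(\mathcal M^*)^*(S)=M\otimes_RS$ (via $\mathcal M^{**}=\mathcal M$) and $\mathcal M^*(S)=\Hom_S(M\otimes_RS,S)$ (via Proposition \ref{adj2}), exactly as the paper intends. Your spelled-out compatibility check and the alternative route via \ref{5.1} and \ref{QU} are fine but not needed.
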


\begin{proof} It is an immediate consequence of Theorem \ref{4.3B}.
%

\end{proof}

\begin{proposition} \cite[Prop. 5.3]{Garfinkel} \label{4.4} $\mathcal M^*$   is   dually separated     iff there exists  a monomorphism  $\mathcal M\hookrightarrow \prod^I\mathcal R$.\end{proposition}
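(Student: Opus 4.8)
The plan is to use the already-established equivalence between dual separatedness of $\mathcal M^*$ and the injectivity of $M\otimes_R N\to\Hom_R(M^*,N)$ (Theorem \ref{5.1}), together with Proposition \ref{dualq3} applied to the reflexive module $\mathbb M=\mathcal M^*$. Since $\mathcal M$ is reflexive (Theorem \ref{reflex}), we have $\mathcal M^{**}=\mathcal M$, so the hypothesis that $\mathcal M^*$ is dually separated is exactly the hypothesis of Proposition \ref{dualq3} with $\mathbb M=\mathcal M^*$; that proposition then says $\mathcal M^*$ is dually separated iff there is a monomorphism $(\mathcal M^*)^*=\mathcal M\hookrightarrow\prod^I\mathcal R$. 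This is precisely the statement to be proved, so in principle the proposition is an immediate corollary of \ref{dualq3} and \ref{reflex}.

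First I would check that Proposition \ref{dualq3} applies: it requires $\mathbb M$ reflexive, and here $\mathbb M=\mathcal M^*$; one needs $\mathcal M^{***}=\mathcal M^*$, which follows formally from $\mathcal M^{**}=\mathcal M$ (Theorem \ref{reflex}), since dualizing the identity $\mathcal M^{**}=\mathcal M$ gives $\mathcal M^{*}=\mathcal M^{***}$. Then I would write out the two directions explicitly. For the forward direction: if $\mathcal M^*$ is dually separated, take an epimorphism $\oplus^I R\to M^{*}(R)=M^*$; dualizing and using $\mathbb M_{qc}^*$ from the definition of dual separatedness gives the composite monomorphism $\mathcal M=\mathcal M^{**}=(\mathcal M^*)^*\hookrightarrow((\mathcal M^*)_{qc})^*\hookrightarrow\prod^I\mathcal R$. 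For the converse: given a monomorphism $\mathcal M\hookrightarrow\prod^I\mathcal R$, its dual $\oplus^I\mathcal R\to\mathcal M^*$ factors through $(\mathcal M^*)_{qc}$ (as in the proof of \ref{dualq3} or Example \ref{3.3.3}), hence dualizing again shows $\mathcal M^*\to(\mathcal M^*)_{qc}^*$ is a monomorphism, i.e. $\mathcal M^*$ is dually separated.

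Alternatively, one can give a more self-contained argument directly from Theorem \ref{5.1} and the universal torsionlessness characterization, in the spirit of Garfinkel's Proposition 5.3: a monomorphism $\mathcal M\hookrightarrow\prod^I\mathcal R$ evaluated at $S$ gives $M\otimes_R S\hookrightarrow\prod^I S$, and one identifies $\prod^I S$ with $\Hom_S((\oplus^I S),S)$ and threads the map through $\Hom_S(\mathrm{something},S)$ built from $M^*$; conversely the injectivity of $M\otimes_R N\to\Hom_R(M^*,N)$ for all $N$ applied to a generating family of $M^*$ produces the embedding into a product of copies of $\mathcal R$.

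The main obstacle is bookkeeping rather than conceptual: one must be careful that the index set $I$ in the monomorphism statement is the same kind of object (a set, via an epimorphism from a free module onto $M^*$) and that the factorization $\oplus^I\mathcal R\to\mathcal M_{qc}^*\to\mathcal M^*$ really is the dual of $\mathcal M\to\mathcal M_{sch}\to\prod^I\mathcal R$ — this is exactly the content packaged in Proposition \ref{1211} and Example \ref{3.3.3}, so invoking those keeps the argument short. I expect the cleanest writeup is simply: "Apply Proposition \ref{dualq3} to the reflexive $\mathcal R$-module $\mathbb M=\mathcal M^*$, noting $\mathbb M^*=\mathcal M^{**}=\mathcal M$ by Theorem \ref{reflex}."
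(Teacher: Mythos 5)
Your proposal is correct and follows exactly the paper's route: the paper's proof is the one-liner ``immediate consequence of Proposition \ref{dualq3}'', applied to the reflexive module $\mathbb M=\mathcal M^*$ with $\mathbb M^*=\mathcal M^{**}=\mathcal M$ by Theorem \ref{reflex}. Your expanded unwinding of the two directions and the reflexivity check for $\mathcal M^*$ are accurate elaborations of that same argument.
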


\begin{proof}  It is an immediate consequence of Proposition \ref{dualq3}.\end{proof}

\begin{example} \label{Ejem} Let $P$ be a projective module, then $\mathcal P^*$ is  dually separated: $P$ is a direct summand of a free module $\oplus^I R$.
Then, $\mathcal P\subseteq \oplus^I\mathcal R\subseteq \prod^I\mathcal R$ and
$\mathcal P^*$ is  dually separated.

\end{example}

\begin{corollary} Let  $N\hookrightarrow M$ be a universally injective morphism of $R$-modules. If  $\mathcal M^*$ is   dually separated,  $\mathcal N^*$ is   dually separated.

\end{corollary}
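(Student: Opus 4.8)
The plan is to reduce the claim to a routine diagram chase using the characterization of dual separatedness in terms of injectivity of $M\otimes_R N\to\Hom_R(M^*,N)$ given in Theorem \ref{5.1}. Concretely, let $j\colon N\hookrightarrow M$ be the given universally injective morphism. I want to show that for every $R$-module $V$ the natural map $N\otimes_R V\to\Hom_R(N^*,V)$ is injective, and then invoke Theorem \ref{5.1} for $N$.

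First I would write down the commutative square relating the two tensor-to-Hom maps. The morphism $j$ induces $j\otimes\mathrm{id}\colon N\otimes_R V\to M\otimes_R V$ (injective, since $j$ is universally injective, in fact already injectivity after tensoring with $V$ suffices) and $j^*\colon M^*\to N^*$, which in turn induces $(j^*)^*\colon \Hom_R(N^*,V)\to\Hom_R(M^*,V)$. The square
\[
\xymatrix{
N\otimes_R V \ar[r] \ar[d]_{j\otimes\mathrm{id}} & \Hom_R(N^*,V) \ar[d]^{(j^*)^*}\\
M\otimes_R V \ar[r] & \Hom_R(M^*,V)
}
\]
commutes by naturality of $m\otimes v\mapsto (\varphi\mapsto \varphi(m)v)$. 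Since $\mathcal M^*$ is dually separated, the bottom arrow is injective by Theorem \ref{5.1}, and the left arrow is injective by hypothesis; hence the composite, and therefore the top arrow, is injective. Applying Theorem \ref{5.1} again (now to $N$) yields that $\mathcal N^*$ is dually separated.

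Alternatively, and perhaps more cleanly, one can argue geometrically via Proposition \ref{4.4}: dual separatedness of $\mathcal M^*$ gives a monomorphism $\mathcal M\hookrightarrow\prod^I\mathcal R$, and composing with $\mathcal N\to\mathcal M$ — which is a monomorphism of $\mathcal R$-modules since $j\otimes_R S$ is injective for every $S$, i.e. $(\mathcal N\to\mathcal M)_S$ is injective for all $S$ — produces a monomorphism $\mathcal N\hookrightarrow\prod^I\mathcal R$; then Proposition \ref{4.4} again gives that $\mathcal N^*$ is dually separated. I expect the only point requiring a word of care is checking that universal injectivity of $j$ indeed makes $\mathcal N\to\mathcal M$ a monomorphism of $\mathcal R$-modules (one must evaluate at an arbitrary $R$-algebra $S$, where the map is $N\otimes_R S\to M\otimes_R S$, which is injective precisely because $j$ is universally injective), but this is immediate from the definition and is the main — and essentially only — obstacle.

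\begin{proof} The morphism $\mathcal N\to\mathcal M$ is a monomorphism of $\mathcal R$-modules: for any commutative $R$-algebra $S$ it is the morphism $N\otimes_R S\to M\otimes_R S$, which is injective because $N\hookrightarrow M$ is universally injective. Since $\mathcal M^*$ is dually separated, by Proposition \ref{4.4} there exists a monomorphism $\mathcal M\hookrightarrow\prod^I\mathcal R$. The composite $\mathcal N\hookrightarrow\mathcal M\hookrightarrow\prod^I\mathcal R$ is then a monomorphism, so $\mathcal N^*$ is dually separated by Proposition \ref{4.4}.
\end{proof}
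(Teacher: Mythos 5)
Your proof is correct and is essentially the paper's own argument: the paper likewise observes that universal injectivity makes $\mathcal N\to\mathcal M$ a monomorphism and then cites Proposition \ref{4.4} to transfer the embedding into $\prod^I\mathcal R$ from $\mathcal M$ to $\mathcal N$. Your alternative sketch via Theorem \ref{5.1} would also work, but the proof you actually commit to coincides with the paper's.
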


\begin{proof} $N\hookrightarrow M$ is a universally injective morphism of $R$-modules iff $\mathcal N\to \mathcal M$ is a monomorphism. The collorary is an inmediate consequence of Proposition \ref{4.4}.

\end{proof}

Noetherian rings are coherent rings (see \cite[I 6-7]{Lubkin}) for definition and properties).

\begin{theorem} Let $R$ be a coherent ring and $M$ an $R$-module. $\mathcal M^*$ is   dually separated     iff  there exists an inclusion $M\subseteq \prod^I R$ such that the cokernel is flat.\end{theorem}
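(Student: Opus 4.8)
The statement is an "iff" over a coherent ring $R$, so I would prove the two implications separately, using Proposition \ref{4.4} (which says $\mathcal M^*$ is dually separated iff there is a monomorphism $\mathcal M\hookrightarrow\prod^I\mathcal R$) as the bridge between the functorial side and a concrete embedding $M\subseteq\prod^I R$.

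For the forward direction, suppose $\mathcal M^*$ is dually separated. By Proposition \ref{4.4} there is a monomorphism $\mathcal M\hookrightarrow\prod^I\mathcal R$; taking sections over $R$ gives an inclusion $M\subseteq\prod^I R$. Let $C$ be its cokernel, so there is an exact sequence $0\to M\to\prod^I R\to C\to 0$. The point is to show $C$ is flat, i.e.\ $\operatorname{Tor}_1^R(C,N)=0$ for all $R$-modules $N$. From the long exact sequence this amounts to $M\otimes_R N\to(\prod^I R)\otimes_R N$ being injective for every $N$. Now over a coherent ring the canonical map $(\prod^I R)\otimes_R N\to\prod^I N$ is injective for every $N$ (this is essentially the characterization of coherence via products being flat, cf.\ \cite{Lubkin}), so it suffices that $M\otimes_R N\to\prod^I N$ be injective. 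But $\prod^I N=\Hom_R(\oplus^I R,N)$, and the map $\mathcal M\hookrightarrow\prod^I\mathcal R=(\oplus^I\mathcal R)^*$ exhibits, via Theorem \ref{prop4} and the duality of Proposition \ref{trivial}, a factorization of $M\otimes_R N\to\Hom_R(M^*,N)$ through $M\otimes_R N\to\Hom_R(\oplus^I R,N)=\prod^I N$; since $\mathcal M^*$ dually separated makes $M\otimes_R N\to\Hom_R(M^*,N)$ injective by Theorem \ref{5.1}, the earlier map in the composite is injective too. Hence $C$ is flat.

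For the reverse direction, suppose there is an inclusion $M\subseteq\prod^I R$ with flat cokernel $C$. Then for every $R$-module $N$ the sequence $0\to M\otimes_R N\to(\prod^I R)\otimes_R N\to C\otimes_R N\to 0$ is exact (flatness of $C$ kills the $\operatorname{Tor}_1$ term), so $M\otimes_R N\hookrightarrow(\prod^I R)\otimes_R N\to\prod^I N=\Hom_R(\oplus^I R,N)$ is injective. As above this map is exactly the composite of $M\otimes_R N\to\Hom_R(M^*,N)$ with $\Hom_R(M^*,N)\to\Hom_R(\oplus^I R,N)$ induced by $\oplus^I R\to M^*$ dual to $M\hookrightarrow\prod^I R$; injectivity of the composite forces $M\otimes_R N\to\Hom_R(M^*,N)$ to be injective for all $N$, which by Theorem \ref{5.1} says $\mathcal M^*$ is dually separated. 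Alternatively, and more cleanly, one can simply observe that $M\subseteq\prod^I R$ already is a monomorphism $\mathcal M\hookrightarrow\prod^I\mathcal R$ after passing to quasi-coherent modules (flatness of $C$ is what guarantees $0\to\mathcal M\to\prod^I\mathcal R\to\mathcal C\to 0$ stays exact as functors, since $(\prod^I R\to C)$ remains surjective and the kernel is computed termwise over each $S$ using that $C$ is flat so $M\otimes_R S\to\prod^I S$ stays injective), and then invoke Proposition \ref{4.4} directly.

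**Main obstacle.** The crux is the use of coherence: I need that for a coherent ring $R$ the natural map $(\prod^I R)\otimes_R N\to\prod^I N$ is injective (equivalently, arbitrary products of copies of $R$ are flat, or "$R$ coherent $\Rightarrow$ products of flats are flat"). This is the standard characterization of coherent rings (Chase's theorem), and I would cite it from \cite{Lubkin} as the introduction of the section promises. The rest is diagram-chasing with the adjunctions and dualities (\ref{trivial}, \ref{prop4}, \ref{tercer}, \ref{5.1}) already established. One small care point: in the reverse direction one must check that the monomorphism $\mathcal M\to\prod^I\mathcal R$ of \emph{functors} (not just of $R$-modules at the level of $R$-points) genuinely holds — this is where flatness of $C$ is used a second time, to see that $M\otimes_R S\to(\prod^I R)\otimes_R S$ is injective for every $S$, hence so is $M\otimes_R S\to\prod^I S=(\prod^I\mathcal R)(S)$.
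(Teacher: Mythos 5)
Your proposal follows the paper's proof essentially verbatim: both directions reduce to Proposition \ref{4.4} via the observation that, over a coherent ring, $\prod^I R$ is flat and $(\prod^I R)\otimes_R N\to \prod^I N$ is injective, so that an inclusion $M\subseteq \prod^I R$ has flat cokernel exactly when $\mathcal M\to \prod^I\mathcal R$ is a monomorphism of functors. One small repair to your forward direction: the natural factorization runs $M\otimes_R N\to \Hom_R(M^*,N)\to \Hom_R(\oplus^I R,N)=\prod^I N$ (not the other way around), so injectivity of $M\otimes_R N\to \Hom_R(M^*,N)$ alone does not yield injectivity into $\prod^I N$ --- but Theorem \ref{5.1} is not needed here at all, since evaluating the monomorphism $\mathcal M\hookrightarrow \prod^I\mathcal R$ on $S=R\oplus N$ gives the injectivity of $M\otimes_R N\to \prod^I N$ directly, which is exactly how the paper argues.
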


\begin{proof}  Observe that $\Hom_R(M,\prod^I R)=\Hom_{\mathcal R}(\mathcal M,\prod^I \mathcal R)$.
$\prod^IR$ is a flat $R$-module and for every $R$-module $S$ the natural morphism $(\prod^I R)\otimes_R S\to  \prod^I S$ is injective, because $R$ is a coherent ring. Then, a morphism $M\to \prod^I R$ is injective and the cokernel is a flat module iff $\mathcal M\to \prod^I\mathcal R$ is a monomorphism.

Then, this theorem is a immediate consequence of Proposition \ref{4.4}.

\end{proof} 

\begin{lemma} \label{ImCk} Let $f\colon \mathcal M^*\to\mathcal N$ be a morphism of $\mathcal R$-modules.
Then, $\Coker f$ is quasi-coherent iff $f$ factors through the quasi-coherent module associated with $\Ima f_R$.\end{lemma}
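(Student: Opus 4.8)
The plan is to prove both implications of the lemma by exploiting the universal property of the quasi-coherent module and Theorem \ref{prop4}. First I would unwind what it means for $\Coker f$ to be quasi-coherent. We have the natural epimorphism $\mathcal N\to \Coker f$, and since $(\Coker f)(R)=\Coker f_R$, there is a canonical comparison morphism $\mathcal N\to \mathcal C$ where $C:=\Coker f_R$, factoring the projection $\mathcal N\to \Coker f$ through the quasi-coherent module $\mathcal C$. So $\Coker f$ is quasi-coherent if and only if the induced morphism $\Coker f\to \mathcal C$ is an isomorphism, equivalently the kernel of $\mathcal N\to \Coker f$ equals the kernel of $\mathcal N\to \mathcal C$, equivalently $\Ima f=\Ker(\mathcal N\to \mathcal C)$.

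For the implication $(\Leftarrow)$, suppose $f$ factors as $\mathcal M^*\overset{g}\to \mathcal M_0\overset{\iota}\to \mathcal N$ where $\mathcal M_0$ is the quasi-coherent module associated with $\Ima f_R\subseteq N$ and $\iota$ is the (quasi-coherent) inclusion. Then $\Coker f=\Coker \iota$ provided $g$ is an epimorphism onto $\mathcal M_0$; and indeed the image of $f$ equals the image of $\iota\circ g$, which is contained in $\mathcal M_0$, and on $R$-points surjects onto $\Ima f_R$, so $\Ima g=\mathcal M_0$ by the equivalence of categories between $R$-modules and quasi-coherent modules (a quasi-coherent submodule of $\mathcal M_0$ with full $R$-points is everything). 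Since $\iota$ is a morphism of quasi-coherent modules, its cokernel is the quasi-coherent module associated with $\Coker(\Ima f_R\hookrightarrow N)=\Coker f_R$, which is exactly what is needed. The key input here is that cokernels of morphisms of quasi-coherent modules are again quasi-coherent, stated in the excerpt right after the equivalence-of-categories proposition.

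For $(\Rightarrow)$, assume $\Coker f$ is quasi-coherent. By the discussion above this forces $\Ker(\mathcal N\to \Coker f)=\Ker(\mathcal N\to \mathcal C)$, i.e. $\Ima f$ is the quasi-coherent submodule of $\mathcal N$ associated with $\Ker(f_R\colon N\to C)=\Ima f_R$. Call this quasi-coherent submodule $\mathcal M_0$, so $\mathcal M_0$ is the quasi-coherent module associated with $\Ima f_R$ and $f$ factors as $\mathcal M^*\twoheadrightarrow \Ima f=\mathcal M_0\hookrightarrow \mathcal N$. That is exactly the desired factorization through the quasi-coherent module associated with $\Ima f_R$.

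The main obstacle I anticipate is the bookkeeping around images and cokernels taken in the category of $\mathcal R$-modules: one must be careful that $\Ima f$, computed pointwise, is honestly a quasi-coherent module precisely when $\Coker f$ is, and that the comparison $\mathcal N\to \mathcal C$ behaves well with respect to kernels. This is where Theorem \ref{prop4} (identifying $\mathcal M^*\otimes$ nothing is needed, but $\Hom_{\mathcal R}(\mathcal M^*,\mathcal N)=M\otimes_R N$ for quasi-coherent $\mathcal N$) and the equivalence of categories for quasi-coherent modules do the real work, so the argument should reduce to a diagram chase once those identifications are in place. I would also double-check that no separatedness hypothesis on $\mathcal M^*$ is secretly required — the statement as phrased is a purely formal consequence of the factorization, so it should hold for arbitrary $M$.
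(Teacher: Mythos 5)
Your reduction of ``$\Coker f$ is quasi-coherent'' to the condition $\Ima f=\Ker(\mathcal N\to\mathcal C)$ (with $C=\Coker f_R$) is fine, and your $(\Leftarrow)$ direction is essentially correct: once $f=\iota\circ g$ with $g\colon\mathcal M^*\to\mathcal N_1$ ($N_1:=\Ima f_R$), one gets $\Ima f=\Ima\iota$ and $\Coker f=\Coker\iota$, which is quasi-coherent. (Your justification that $g$ is an epimorphism is slightly off --- $\Ima g$ is not known to be a quasi-coherent submodule, so you cannot invoke the equivalence of categories for it --- but the fact is true and immediate: $\Ima g_S$ is an $S$-submodule of $N_1\otimes_RS$ containing the image of $N_1=\Ima g_R$, hence all of $N_1\otimes_RS$. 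In fact you do not even need $g$ to be an epimorphism: $\Ima(\iota\circ g)\subseteq\Ima\iota\subseteq\Ima f$ already suffices.)

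The $(\Rightarrow)$ direction has a genuine gap. By right-exactness of $-\otimes_RS$, the functor $\Ker(\mathcal N\to\mathcal C)$ is the \emph{image} of $\mathcal N_1\to\mathcal N$, i.e.\ $S\mapsto\Ima(N_1\otimes_RS\to N\otimes_RS)$; this is a quotient of $\mathcal N_1$, and it equals $\mathcal N_1$ only when $N_1\hookrightarrow N$ is universally injective. Your sentence ``$\Ima f$ is the quasi-coherent submodule of $\mathcal N$ associated with $\Ima f_R$'' conflates $\mathcal N_1$ with its image in $\mathcal N$, and so your factorization $\mathcal M^*\twoheadrightarrow\Ima f\hookrightarrow\mathcal N$ only factors $f$ through $\Ima(\mathcal N_1\to\mathcal N)$, not through $\mathcal N_1$ as the lemma requires. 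Lifting $\mathcal M^*\to\Ima(\mathcal N_1\to\mathcal N)$ along the epimorphism $\mathcal N_1\twoheadrightarrow\Ima(\mathcal N_1\to\mathcal N)$ is not formal; it is exactly where the hypothesis that the source is a module scheme enters. The paper's proof does this by applying Theorem \ref{prop4} to the exact sequence $M\otimes_RN_1\to M\otimes_RN\to M\otimes_RN_2\to 0$: the element $f\in M\otimes_RN=\Hom_{\mathcal R}(\mathcal M^*,\mathcal N)$ dies in $M\otimes_RN_2$, hence lifts to $M\otimes_RN_1=\Hom_{\mathcal R}(\mathcal M^*,\mathcal N_1)$. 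Your closing remark that the statement is ``a purely formal consequence of the factorization'' holding for arbitrary sources is therefore the wrong instinct: for a general functor $\mathbb X$ in place of $\mathcal M^*$, a morphism $\mathbb X\to\mathcal N$ killing $\mathcal N_2$ need not lift to $\mathcal N_1$.
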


\begin{proof}
Let $N_1=\Ima f_R$ and let $N_2=N/N_1$. Observe that  $\Coker f$ is quasi-coherent iff 
 $\Coker f=\mathcal N_2$, and   $\Coker f=\mathcal N_2$ iff the composite morphism $\mathcal M^*\to \mathcal N\to\mathcal N_2$ is zero. Consider the diagram
 $$\xymatrix{  \Hom_{\mathcal R}(\mathcal M^*,\mathcal N_1) \ar[r] &
\Hom_{\mathcal R}(\mathcal M^*,\mathcal N) \ar[r] & \Hom_{\mathcal R}(\mathcal M^*,\mathcal N_2) &  \\  M\otimes_R N_1 \ar[r]  \ar@{=}[u]^-{\text{\ref{prop4}}}&
M\otimes_R N \ar[r] \ar@{=}[u]^-{\text{\ref{prop4}}}&  M\otimes_R N_2 \ar@{=}[u]^-{\text{\ref{prop4}}}
\ar[r] & 0}$$
Then,  the composite morphism $\mathcal M^*\to \mathcal N\to\mathcal N_2$ is zero iff 
$f$ factors through $\mathcal N_1$, which is the quasi-coherent module associated with $\Ima f_R$.
We are done.
\end{proof}

\begin{remarks} If $f\colon \mathcal M^*\to \mathcal N$ is an epimorphism, $N$ is a finitely generated module: 
$f=\sum_{i=1}^r m_i\otimes n_i\in \Hom_{\mathcal R}(\mathcal M^*,\mathcal N)= M\otimes N$, therefore
$f$ factors through the coherent module associated with $\langle n_1,\ldots,n_r\rangle$, then $N=\langle n_1,\ldots,n_r\rangle$.

If $N_1\hookrightarrow N_2$ is an injective morphism of $R$-modules and $M$ is flat, the map
$\Hom_{\mathcal R}(\mathcal M^*,\mathcal N_1)=M\otimes_R N_1\to M\otimes_R N_2= \Hom_{\mathcal R}(\mathcal M^*,\mathcal N_1)$ is injective.

\end{remarks}

\begin{theorem} \label{QQ} $\mathcal M^*$ is  dually separated     iff every morphism $f\colon \mathcal M^*\to \mathcal N$ (uniquely) factors through the coherent module associated with $\Ima f_R$.\end{theorem}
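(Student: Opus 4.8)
The plan is to deduce Theorem \ref{QQ} by combining the characterization of dual separation via cokernels (Lemma \ref{lemaD}) with the cokernel-to-image translation for dual modules (Lemma \ref{ImCk}). Concretely, I would argue both directions through the equivalence ``$\Coker f$ is quasi-coherent $\iff$ $f$ factors through $\mathcal{(\Ima f_R)}$'', which is exactly Lemma \ref{ImCk} applied to an arbitrary morphism $f\colon \mathcal M^*\to \mathcal N$.

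First I would treat the forward implication. Assume $\mathcal M^*$ is dually separated. By Lemma \ref{lemaD}, the cokernel of every $\mathcal R$-module morphism from $\mathcal M^*$ to a quasi-coherent module $\mathcal N$ is quasi-coherent; in fact $\Coker f$ is the quasi-coherent module associated with $\Coker f_R$. Then Lemma \ref{ImCk} gives that $f$ factors through the quasi-coherent module $\mathcal{(\Ima f_R)}$ associated with $\Ima f_R$. Uniqueness of the factorization follows because $\mathcal M\to \mathcal{(\Ima f_R)}$ is an epimorphism: the induced map $M\otimes_R(\Ima f_R)\to M\otimes_R N$ identifies (via Theorem \ref{prop4}) with the monomorphism $\Hom_{\mathcal R}(\mathcal M^*,\mathcal{(\Ima f_R)})\hookrightarrow\Hom_{\mathcal R}(\mathcal M^*,\mathcal N)$, so at most one lift exists. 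Conversely, suppose every morphism $f\colon \mathcal M^*\to\mathcal N$ factors through $\mathcal{(\Ima f_R)}$. Then by Lemma \ref{ImCk}, $\Coker f$ is quasi-coherent for every such $f$ whose target $\mathcal N$ is quasi-coherent. In particular, taking $f_R=0$ forces $f=0$ (since then $\Ima f_R=0$ and $f$ factors through the zero module), which by Theorem \ref{W1} shows $\mathcal M^*$ is dually separated; alternatively one invokes Lemma \ref{lemaD} directly, since the cokernel of every morphism to a quasi-coherent module is now quasi-coherent.

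I should be careful that Lemma \ref{ImCk} is stated only for morphisms $f\colon \mathcal M^*\to \mathcal N$ with $\mathcal N$ \emph{quasi-coherent}, whereas the statement of Theorem \ref{QQ} appears to speak of ``every morphism $f\colon \mathcal M^*\to \mathcal N$''. The intended reading is surely that $\mathcal N$ ranges over quasi-coherent modules (the notation $\Ima f_R$ only makes sense, and $\mathcal{(\Ima f_R)}$ is only defined, in that case), so I would phrase the proof accordingly and note that $\Ima f_R\subseteq N$ with $N=\mathcal N(R)$. The main obstacle is thus not deep: it is essentially a matter of correctly chaining Lemma \ref{lemaD} and Lemma \ref{ImCk} and supplying the short uniqueness argument for the factorization, which is the only part not already packaged in those two lemmas.

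\begin{proof} We may assume $\mathcal N$ ranges over quasi-coherent $\mathcal R$-modules, so that $N:=\mathcal N(R)$ and $\Ima f_R\subseteq N$ make sense.

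$\Rightarrow)$ Suppose $\mathcal M^*$ is dually separated and let $f\colon \mathcal M^*\to \mathcal N$ be a morphism of $\mathcal R$-modules. By Lemma \ref{lemaD}, $\Coker f$ is quasi-coherent. By Lemma \ref{ImCk}, $f$ factors through the quasi-coherent module $\mathcal{(\Ima f_R)}$ associated with $\Ima f_R$. For uniqueness, note that the map $M\otimes_R(\Ima f_R)\to M\otimes_R N$ is identified, by Theorem \ref{prop4}, with the map $\Hom_{\mathcal R}(\mathcal M^*,\mathcal{(\Ima f_R)})\to \Hom_{\mathcal R}(\mathcal M^*,\mathcal N)$; since $\Ima f_R\hookrightarrow N$ and $M$ is flat (Corollary \ref{SQF}), this map is injective, so the lift of $f$ through $\mathcal{(\Ima f_R)}$ is unique.

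$\Leftarrow)$ Suppose every morphism $f\colon \mathcal M^*\to \mathcal N$ (with $\mathcal N$ quasi-coherent) factors through the quasi-coherent module associated with $\Ima f_R$. By Lemma \ref{ImCk}, $\Coker f$ is quasi-coherent for every such $f$. By Lemma \ref{lemaD}, $\mathcal M^*$ is dually separated.
\end{proof}
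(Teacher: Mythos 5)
Your proposal is correct and takes essentially the same route as the paper, whose entire proof reads ``It is an immediate consequence of \ref{lemaD} and \ref{ImCk}.'' You simply make the chaining of Lemma \ref{lemaD} and Lemma \ref{ImCk} explicit and supply the uniqueness of the factorization (via flatness of $M$ from Corollary \ref{SQF}), a detail the paper leaves implicit but records in the remark following Lemma \ref{ImCk}.
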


\begin{proof} It is an immediate consequence of \ref{lemaD} and \ref{ImCk}.

%
%

\end{proof}

\begin{theorem} \label{trace1} $\mathcal M^*$ is dually separated iff any morphism $f\colon \mathcal M^*\to\mathcal R$
factors through the quasi-coherent module associated with  $\Ima f_R$.
\end{theorem}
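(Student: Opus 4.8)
The implication from left to right is immediate: it is the case $\mathcal N=\mathcal R$ of Theorem \ref{QQ} (equivalently, combine Lemma \ref{lemaD} with Lemma \ref{ImCk}). So the plan is to prove the converse, and my strategy is to translate the hypothesis into an intrinsic property of the $R$-module $M$, amplify it from one element to any finite set of elements, and conclude with Theorem \ref{5.1}.

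First I would rewrite the hypothesis. By Theorem \ref{prop4} one has $\Hom_{\mathcal R}(\mathcal M^*,\mathcal R)=M$, a morphism $f\colon\mathcal M^*\to\mathcal R$ being evaluation at the corresponding $\mu\in M$, so that $\Ima f_R=I_\mu:=\{\phi(\mu):\phi\in M^*\}$, an ideal of $R$. Again by Theorem \ref{prop4}, $\Hom_{\mathcal R}(\mathcal M^*,\mathcal N)=M\otimes_R N$ for every $R$-module $N$, and the inclusion $I_\mu\hookrightarrow R$ induces on these groups the canonical map $M\otimes_R I_\mu\to M$, whose image is $I_\mu M$. Hence ``$f$ factors through the quasi-coherent module associated with $\Ima f_R$'' is equivalent to $\mu\in I_\mu M$, i.e. to the existence of finitely many $\phi_1,\dots,\phi_r\in M^*$ and $m_1,\dots,m_r\in M$ with $\mu=\sum_j\phi_j(\mu)\,m_j$. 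Thus the hypothesis says precisely that every $\mu\in M$ admits such an expression.

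Next I would prove, by induction on $n$, the finite analogue: for any $\mu_1,\dots,\mu_n\in M$ there exist $\psi_1,\dots,\psi_N\in M^*$ and $e_1,\dots,e_N\in M$ with $\mu_k=\sum_i\psi_i(\mu_k)\,e_i$ for all $k$. The case $n=1$ is the statement just obtained. For the step, pick $\psi_i,e_i$ working for $\mu_1,\dots,\mu_{n-1}$, put $\mu_n':=\mu_n-\sum_i\psi_i(\mu_n)\,e_i$, choose $\chi_j,m_j$ with $\mu_n'=\sum_j\chi_j(\mu_n')\,m_j$, and verify that the family $\{\psi_i,\,e_i-\sum_j\chi_j(e_i)\,m_j\}_i\cup\{\chi_j,\,m_j\}_j$ represents all of $\mu_1,\dots,\mu_n$ --- a short computation using $\mu_k-\sum_i\psi_i(\mu_k)\,e_i=0$ for $k<n$. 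Merging the individual dual-basis-type expressions into one finite family valid for all the given vectors simultaneously is the step I expect to be the main obstacle; the rest is formal.

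Finally I would apply Theorem \ref{5.1}: it suffices to show $M\otimes_R N\to\Hom_R(M^*,N)$ is injective for every $R$-module $N$. Let $\xi=\sum_{k=1}^n m_k\otimes n_k$ lie in the kernel, so that $\sum_k\phi(m_k)\,n_k=0$ for all $\phi\in M^*$. Applying the finite analogue to $m_1,\dots,m_n$ yields $\psi_i\in M^*$, $e_i\in M$ with $m_k=\sum_i\psi_i(m_k)\,e_i$, whence
$$\xi=\sum_k\Big(\sum_i\psi_i(m_k)\,e_i\Big)\otimes n_k=\sum_i e_i\otimes\Big(\sum_k\psi_i(m_k)\,n_k\Big)=0.$$
So the map is injective for every $N$ and $\mathcal M^*$ is dually separated. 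This also recovers the classical equivalence between universally torsionless modules and trace modules mentioned in the introduction.
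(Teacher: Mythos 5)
Your proof is correct, but it follows a genuinely different route from the paper's. The paper handles the converse entirely inside the functorial formalism: by Theorem \ref{W1} it suffices to show that $f\colon\mathcal M^*\to\mathcal N$ vanishes whenever $f_R=0$; since $\Hom_{\mathcal R}(\mathcal M^*,\mathcal N)=M\otimes_RN$, one reduces to $N$ finitely generated and inducts on the number of generators, the base case $N=R/I$ being settled by lifting $f$ to a morphism $g\colon\mathcal M^*\to\mathcal R$ with $\Ima g_R\subseteq I$ and invoking the hypothesis on $g$. You instead translate the hypothesis into the intrinsic condition $\mu\in M^*(\mu)\cdot M$ for every $\mu\in M$ --- which is precisely the trace-module condition, and your computation of it is the content of Proposition \ref{trace2}, proved independently in the paper --- then run the classical Garfinkel/Zimmermann-Huisgen induction to upgrade the one-element dual-basis expression to a single finite family $\{\psi_i,e_i\}$ valid for any finite set of elements (your induction step checks out: the identity $\sum_i\psi_i(\mu_k)e_i+\sum_j\chi_j(\mu_k-\sum_i\psi_i(\mu_k)e_i)m_j=\mu_k$ holds for $k<n$ and $k=n$ alike), and finally verify the universally-torsionless criterion of Theorem \ref{5.1} by the standard $\xi=\sum_i e_i\otimes(\sum_k\psi_i(m_k)n_k)=0$ computation. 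What your approach buys is a self-contained, purely module-theoretic argument that directly exhibits the classical equivalence ``trace module $\Leftrightarrow$ universally torsionless'' (which the paper only recovers by chaining \ref{trace1}, \ref{trace2} and \ref{5.1}); what the paper's approach buys is brevity and consistency with the functorial machinery already developed, at the cost of routing through the auxiliary statement $f_R=0\Rightarrow f=0$ rather than producing explicit dual-basis data.
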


\begin{proof}  $\Rightarrow)$ It is an immediate consequence of \ref{QQ}.

$\Leftarrow)$ We have to prove that a morphism $f\colon \mathcal M^*\to \mathcal N$ is zero if
$f_R=0$, by \ref{W1}. Any morphism $f\colon \mathcal M^*\to \mathcal N$ factors through the quasi-coherent module associated with a finitely generated submodule of $N$. Then, we can suppose that $N$ is finitely generated, that is, $N=\langle n_1,\ldots,n_r\rangle$. 

Let us proceed by induction on $r$. If $r=1$, $N\simeq R/I$, for some ideal  $I\subset R$. 
Let $\pi\colon \mathcal R\to \mathcal N$ be the quotient morphism. There exists a morphism
$g\colon \mathcal M^*\to \mathcal R$ such that the diagram
$$\xymatrix{\mathcal M^* \ar[r]^-g \ar[rd]_-f & \mathcal R \ar[d]^-\pi \\ &  \mathcal N}$$
is commutative (recall $\Hom_{\mathcal R}(\mathcal M^*,\mathcal N')\overset{\text{\ref{prop4}}}=M\otimes_R N'$).
Then, $\Ima g_R\subseteq I$, because $\Ima(\pi_R\circ g_R)=\Ima (\pi\circ g)_R=\Ima f_R=0$. 
Then, $g$ factors through $\mathcal I$ and $f=0$.
Assume the statement is true for $1,\ldots,r-1$ and $N=\langle n_1,\ldots,n_r\rangle$. 
Let $N'=N/\langle n_1\rangle$ and let $\pi\colon \mathcal N\to \mathcal N'$ be the quotient morphism.
Observe that $(\pi\circ f)_R=\pi_R\circ f_R=0$, then  $\pi\circ f=0$, by the induction hypothesis. 
Let $\mathcal N_1$ be the quasi-coherent module associated with $\langle n_1\rangle$. Consider the diagram
$$\xymatrix{M\otimes_R \langle n_1\rangle\ar[r] & M\otimes_R N\ar[r]&  M\otimes_R N' \ar[r] & 0\\
\Hom_{\mathcal R}(\mathcal M^*,\mathcal N_1) \ar@{=}[u]^-{\text{\ref{prop4}}} \ar[r] & 
\Hom_{\mathcal R}(\mathcal M^*,\mathcal N) \ar@{=}[u]^-{\text{\ref{prop4}}} \ar[r]^-{\pi_*}&  \Hom_{\mathcal R}(\mathcal M^*,\mathcal N') \ar@{=}[u]^-{\text{\ref{prop4}}} & }$$
Since $\pi_*(f)=\pi\circ f=0$, $f$ factors through a morphism $g\colon \mathcal M^*\to \mathcal N_1$.
Observe that $g_R=0$, because $f_R=0$,  then $g=0$ and $f=0$.

\end{proof} 

A module $M$ is a trace module if every $m\in M$
holds $m\in M^*(m)\cdot M$, where $M^*(m):=\{w(m)\in R\colon w\in M^*\}$ (see \cite{Garfinkel}).   

\begin{proposition} \label{trace2} $M$ is a trace module iff any morphism $f\colon \mathcal M^*\to\mathcal R$
factors through the quasi-coherent module associated with  $\Ima f_R$.
\end{proposition}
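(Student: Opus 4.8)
The plan is to translate the definition of trace module directly into the functorial language by means of Theorem \ref{prop4}. First I would recall that $\Hom_{\mathcal R}(\mathcal M^*,\mathcal R)=M\otimes_R R=M$ by Theorem \ref{prop4}, so that every morphism $f\colon\mathcal M^*\to\mathcal R$ is of the form $f_m$ for a unique $m\in M$, where $f_m$ is given on $S$-points by $f_m(w):=w(m)$ for $w\in\mathcal M^*(S)$. In particular $(f_m)_R\colon M^*\to R$ is the evaluation map $w\mapsto w(m)$, so its image $\Ima (f_m)_R$ equals the ideal $M^*(m)=\{w(m)\colon w\in M^*\}\subseteq R$ (it is an $R$-submodule of $R$, hence an ideal, because $M^*$ is an $R$-module and evaluation at $m$ is $R$-linear).

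Next I would analyze what it means for $f_m$ to factor through the quasi-coherent module $\mathcal I$ associated with $I:=M^*(m)=\Ima (f_m)_R$, equipped with its natural morphism $\mathcal I\to\mathcal R$ (this is exactly the notion used in Lemma \ref{ImCk} for $\mathcal N=\mathcal R$). By definition, $f_m$ factors through $\mathcal I$ precisely when it lies in the image of the postcomposition map $\Hom_{\mathcal R}(\mathcal M^*,\mathcal I)\to\Hom_{\mathcal R}(\mathcal M^*,\mathcal R)$. Applying Theorem \ref{prop4} to both $\Hom$'s and using the naturality of that isomorphism in the second variable (immediate from the explicit formula $\tilde{m'\otimes i}(w)=w(m')\cdot i$), this map is identified with the canonical map $M\otimes_R I\to M\otimes_R R=M$, $m'\otimes i\mapsto i\,m'$, whose image is exactly $I\cdot M=M^*(m)\cdot M$. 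Since $f_m$ corresponds to $m$ under these identifications, $f_m$ factors through $\mathcal I$ if and only if $m\in M^*(m)\cdot M$.

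Finally I would quantify over $m\in M$: since every morphism $\mathcal M^*\to\mathcal R$ is some $f_m$ and has $\Ima f_R=M^*(m)$, the statement ``every morphism $f\colon\mathcal M^*\to\mathcal R$ factors through the quasi-coherent module associated with $\Ima f_R$'' is equivalent to ``$m\in M^*(m)\cdot M$ for every $m\in M$'', which is precisely the definition of $M$ being a trace module. I do not expect a genuine obstacle; the only points requiring care are the bookkeeping identification of the postcomposition map with $M\otimes_R I\to M$ through Theorem \ref{prop4}, and the observation that ``$f$ factors through $h$'' means exactly ``$f$ lies in the image of $\Hom_{\mathcal R}(-,\operatorname{source}h)\to\Hom_{\mathcal R}(-,\operatorname{target}h)$'', so that the failure of $\mathcal I\to\mathcal R$ to be a monomorphism of functors is irrelevant.
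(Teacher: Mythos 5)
Your proposal is correct and follows essentially the same route as the paper: identify $\Hom_{\mathcal R}(\mathcal M^*,\mathcal R)=M$ via Theorem \ref{prop4}, observe that the morphism corresponding to $m$ has $\Ima f_R=M^*(m)$, and use the commutative square identifying $\Hom_{\mathcal R}(\mathcal M^*,\mathcal I)\to\Hom_{\mathcal R}(\mathcal M^*,\mathcal R)$ with $I\otimes_R M\to M$ to conclude that factoring through $\mathcal I$ means $m\in I\cdot M$. Your write-up merely spells out the naturality bookkeeping that the paper leaves implicit.
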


\begin{proof} $\Hom_{\mathcal R}(\mathcal M^*,\mathcal R)=M$, then $f=m\in M$ and $\Ima f_R=M^*(m)$.
Let $I\subseteq R$ be an ideal, then $f=m$ factors through $\mathcal I$ iff $m\in I\cdot M$, as it is easy to see taking into account the following diagram
$$\xymatrix{\Hom_{\mathcal R}(\mathcal M^*,\mathcal I) \ar[r] \ar@{=}[d] &
\Hom_{\mathcal R}(\mathcal M^*,\mathcal R) \ar@{=}[d] \\ I\otimes_R M \ar[r] & M}$$
We are done.

\end{proof}

\begin{corollary}  $\mathcal M^*$ is dually separated iff $M$ is a trace module.

\end{corollary}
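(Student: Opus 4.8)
The statement to prove is the final corollary: $\mathcal{M}^*$ is dually separated iff $M$ is a trace module. The plan is to simply chain together the two immediately preceding results. First I would invoke Theorem~\ref{trace1}, which asserts that $\mathcal{M}^*$ is dually separated iff every morphism $f\colon \mathcal{M}^*\to\mathcal{R}$ factors through the quasi-coherent module associated with $\Ima f_R$. Then I would invoke Proposition~\ref{trace2}, which asserts that $M$ is a trace module iff every morphism $f\colon \mathcal{M}^*\to\mathcal{R}$ factors through the quasi-coherent module associated with $\Ima f_R$. Since both conditions are equivalent to the same middle condition, they are equivalent to each other, and the corollary follows at once.

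The only thing to verify is that the ``factoring condition'' is literally the same in both statements, which it is: both Theorem~\ref{trace1} and Proposition~\ref{trace2} are phrased with the identical clause ``any morphism $f\colon \mathcal{M}^*\to\mathcal{R}$ factors through the quasi-coherent module associated with $\Ima f_R$.'' So there is no hidden reconciliation needed. I do not anticipate any real obstacle here; the corollary is a one-line consequence of the two named results, and the substantive work has already been done in proving Theorem~\ref{trace1} (via Lemma~\ref{lemaD}, Lemma~\ref{ImCk}, Theorem~\ref{QQ}, and an induction on the number of generators of $N$) and Proposition~\ref{trace2} (by unwinding the definition of trace module against the identification $\Hom_{\mathcal R}(\mathcal M^*,\mathcal R)=M$).

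Thus the proof reads: By Theorem~\ref{trace1}, $\mathcal{M}^*$ is dually separated iff any morphism $f\colon \mathcal{M}^*\to\mathcal{R}$ factors through the quasi-coherent module associated with $\Ima f_R$. By Proposition~\ref{trace2}, this latter condition holds iff $M$ is a trace module. Hence $\mathcal{M}^*$ is dually separated iff $M$ is a trace module.
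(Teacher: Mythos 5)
Your proposal is correct and is exactly the paper's argument: the authors likewise dispose of the corollary as an immediate consequence of Theorem~\ref{trace1} and Proposition~\ref{trace2}, whose equivalent conditions are literally identical. Nothing further is needed.
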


\begin{proof} It is an immediate consequence of \ref{trace1} and \ref{trace2}.

\end{proof}

\begin{lemma} \label{lemaP} Let $M$ be a flat $R$-module and $P$ a finitely presented $R$-module. Then,
$$\Hom_{\mathcal R}(\mathcal M^*,{\mathcal P^*}_{qc})=\Hom_{\mathcal R}(\mathcal M^*,\mathcal P^*).$$

\end{lemma}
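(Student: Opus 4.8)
The statement asserts that for $M$ flat and $P$ finitely presented, the natural morphism $\Hom_{\mathcal R}(\mathcal M^*,{\mathcal P^*}_{qc})\to \Hom_{\mathcal R}(\mathcal M^*,\mathcal P^*)$ induced by the canonical map ${\mathcal P^*}_{qc}\to \mathcal P^*$ of Proposition \ref{tercerb} is an isomorphism. The plan is to compute both sides explicitly using the tools already assembled in the excerpt, and check that the natural map becomes the identity under those identifications.

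First I would identify the left-hand side. Since ${\mathcal P^*}_{qc}$ is the quasi-coherent module associated with the $R$-module $\mathcal P^*(R)=\Hom_R(P,R)=P^*$, Theorem \ref{prop4} (applied with $\mathcal M^*\otimes -$, or directly as $\Hom_{\mathcal R}(\mathcal M^*,\mathcal N)=M\otimes_R N$) gives $\Hom_{\mathcal R}(\mathcal M^*,{\mathcal P^*}_{qc})=M\otimes_R P^*$. For the right-hand side, I would use Proposition \ref{trivial} to rewrite $\Hom_{\mathcal R}(\mathcal M^*,\mathcal P^*)=\Hom_{\mathcal R}(\mathcal P,\mathcal M^{**})=\Hom_{\mathcal R}(\mathcal P,\mathcal M)$ (using reflexivity of quasi-coherent modules, Theorem \ref{reflex}), and then the equivalence of categories gives $\Hom_{\mathcal R}(\mathcal P,\mathcal M)=\Hom_R(P,M)$. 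So the claim reduces to showing that the natural map
$$M\otimes_R P^*\to \Hom_R(P,M)$$
is an isomorphism, which is a standard fact when $P$ is finitely presented and $M$ is flat (it holds for $P$ finitely generated free, both sides are right-exact in a presentation $R^m\to R^n\to P\to 0$ up to the flatness of $M$ on the $\Hom$ side, and one concludes by the five lemma). The one point requiring care is to verify that the isomorphism produced this way is genuinely the map induced by ${\mathcal P^*}_{qc}\to \mathcal P^*$, not merely some abstract isomorphism; I would track an element $m\otimes w\in M\otimes_R P^*$ through the chain of identifications and confirm it lands on the homomorphism $p\mapsto w(p)\cdot m$ on both sides.

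The main obstacle, such as it is, is bookkeeping rather than conceptual: making sure the naturality square commutes, i.e. that the two routes from $M\otimes_R P^*$ to $\Hom_{\mathcal R}(\mathcal M^*,\mathcal P^*)$ — one via Theorem \ref{prop4} and the canonical ${\mathcal P^*}_{qc}\to \mathcal P^*$, the other via Proposition \ref{trivial} and the classical map $M\otimes_R P^*\to \Hom_R(P,M)$ — agree. This is a diagram chase through Propositions \ref{trivial}, \ref{tercer}, \ref{tercerb} and Theorem \ref{prop4}, all of which are compatible by construction. Once that is in place, the lemma follows from the classical isomorphism $M\otimes_R P^*\xrightarrow{\ \sim\ }\Hom_R(P,M)$ for $P$ finitely presented and $M$ flat, proved by taking a finite free presentation of $P$ and applying the snake/five lemma.
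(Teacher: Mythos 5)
Your plan is correct, and the identifications you use are all available in the paper: $\Hom_{\mathcal R}(\mathcal M^*,{\mathcal P^*}_{qc})=M\otimes_R P^*$ by Theorem \ref{prop4}, and $\Hom_{\mathcal R}(\mathcal M^*,\mathcal P^*)=\Hom_{\mathcal R}(\mathcal P,\mathcal M^{**})=\Hom_R(P,M)$ by Proposition \ref{trivial}, Theorem \ref{reflex} and the equivalence of categories; the element chase confirming that the induced map is $m\otimes w\mapsto (p\mapsto w(p)\cdot m)$ does go through. The route differs from the paper's in packaging rather than substance. The paper never passes through $\Hom_R(P,M)$: it applies the left-exact functor $\Hom_{\mathcal R}(\mathcal M^*,-)$ directly to $0\to\mathcal P^*\to\mathcal R^m\to\mathcal R^n$ (the dual of a finite presentation), identifies the outer terms with $R^m\otimes_R M$ and $R^n\otimes_R M$ via Theorem \ref{prop4}, and compares kernels with the flat-tensored sequence $0\to P^*\otimes_R M\to R^m\otimes_R M\to R^n\otimes_R M$, so both Hom groups are exhibited as the same kernel. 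You instead reduce to the classical isomorphism $P^*\otimes_R M\cong\Hom_R(P,M)$ for $P$ finitely presented and $M$ flat, whose standard proof is exactly that same five-lemma diagram; what your version buys is a cleaner separation between the functorial bookkeeping and a citable classical fact, at the cost of an extra layer of identifications whose compatibility must be checked (which you rightly flag). One small correction: in your parenthetical justification of the classical isomorphism, the flatness of $M$ is needed on the tensor side, not the Hom side --- $\Hom_R(-,M)$ is left exact for any $M$, whereas $(-)^*\otimes_R M$ preserves the left exactness of $0\to P^*\to (R^n)^*\to (R^m)^*$ only because $M$ is flat.
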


\begin{proof} Consider an exact sequence of morphisms $R^n\to R^m\to P\to 0$. Dually,
$0\to \mathcal P^*\to \mathcal R^m\to \mathcal R^n$ is exact. From the commutative diagram of exact rows
$$\xymatrix{0 \ar[r] & \Hom_{\mathcal R}(\mathcal M^*,\mathcal P^*)\ar[r] &  \Hom_{\mathcal R}(\mathcal M^*,\mathcal R^m) \ar[r]&  \Hom_{\mathcal R}(\mathcal M^*,\mathcal R^n)\\ 0 \ar[r] & P^* \otimes_R M \ar[r] & R^m\otimes_R M \ar[r] \ar@{=}[u]^-{\text{\ref{prop4}}} & R^n\otimes_R M  \ar@{=}[u]^-{\text{\ref{prop4}}}\\ & \Hom_{\mathcal R}(\mathcal M^*, {\mathcal P^*}_{qc}) \ar@{=}[u]^-{\text{\ref{prop4}}} & & }$$
one has that $\Hom_{\mathcal R}(\mathcal M^*,{\mathcal P^*}_{qc})=\Hom_{\mathcal R}(\mathcal M^*,\mathcal P^*).$

\end{proof}

\begin{proposition} $\mathcal M^*$ is dually separated iff $M$ is a flat strict Mittag-Leffler module.

\end{proposition}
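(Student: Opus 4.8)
The claim is that $\mathcal M^*$ is dually separated if and only if $M$ is a flat strict Mittag-Leffler module. I would prove this by linking both conditions to the already-established characterization in Theorem~\ref{QQ} (or its specialization Theorem~\ref{trace1}), namely that $\mathcal M^*$ is dually separated iff every morphism $f\colon \mathcal M^*\to\mathcal N$ factors through the quasi-coherent module associated with $\Ima f_R$. First I would recall from Corollary~\ref{SQF} that if $\mathcal M^*$ is dually separated then $M$ is already flat; so on both sides of the equivalence we may assume $M$ flat from the outset, and the real content is to match the strict Mittag-Leffler condition against the factorization property. The bridge between the two worlds is the observation, made precise in Lemma~\ref{lemaP}, that for a finitely presented module $P$ one has $\Hom_{\mathcal R}(\mathcal M^*,{\mathcal P^*}_{qc})=\Hom_{\mathcal R}(\mathcal M^*,\mathcal P^*)$, i.e. $P^*\otimes_R M = \Hom_{\mathcal R}(\mathcal M^*,\mathcal P^*)$, so that morphisms $\mathcal M^*\to\mathcal P^*$ are controlled by the finite data $P^*\otimes_R M$.

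\textbf{Forward direction.} Assume $\mathcal M^*$ is dually separated. Then $M$ is flat, and by Theorem~\ref{P2}-type reasoning (cf. the list in the introduction, items (1)--(4), applied to the flat Mittag-Leffler case) I would first argue that $M$ is Mittag-Leffler. Concretely, write $M=\ilim{i} F_i$ as a direct limit of finitely presented modules; I must exhibit, for each $R$-module $N$ and each $i$, an index $j\geq i$ with $\Ima(\Hom_R(M,N)\to\Hom_R(F_i,N))=\Ima(\Hom_R(F_j,N)\to\Hom_R(F_i,N))$. The plan is to translate $\Hom_R(F_i,N)=F_i^*\otimes_R N$ (for $F_i$ finitely presented, using that $N$ is arbitrary and $F_i^*$ finitely presented when combined with flatness — actually one uses $\Hom_R(F_i,N)\cong F_i^*\otimes N$ only when $F_i$ is finitely presented and we allow it via the natural map, which is iso for f.p. modules) and then realize the transition maps $F_j^*\to F_i^*$ dually. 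Using the dual picture $\mathcal M^*=\plim{i}\mathcal F_i^*$... no: rather $\mathcal M=\ilim{i}\mathcal F_i$, so $\mathcal M^*=\plim{i}\mathcal F_i^*$. The factorization property of Theorem~\ref{QQ} applied to the projections and to morphisms into $\mathcal N$ forces the images to stabilize in the required way; this is where Lemma~\ref{lemaP} does the work, identifying $\Hom_{\mathcal R}(\mathcal M^*,\mathcal F_i^*)$ with $F_i^*\otimes_R M$ and the factorization through $\Ima f_R$ with the Mittag-Leffler image-stabilization.

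\textbf{Converse direction.} Assume $M$ is a flat strict Mittag-Leffler module. I would verify the criterion of Theorem~\ref{trace1}: any morphism $f\colon\mathcal M^*\to\mathcal R$ factors through the quasi-coherent module associated with $\Ima f_R$. Since $\Hom_{\mathcal R}(\mathcal M^*,\mathcal R)=M$ by Theorem~\ref{prop4}, such an $f$ is just an element $m\in M$ with $\Ima f_R=M^*(m)=\{w(m):w\in M^*\}$, and by Proposition~\ref{trace2} the factorization holds precisely when $m\in M^*(m)\cdot M$, i.e. when $M$ is a trace module. So the converse reduces to: flat strict Mittag-Leffler $\Rightarrow$ trace module. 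Here I would use the strict Mittag-Leffler condition directly. Write $m$ as the image of some element $\bar m\in F_i$; pick $j\geq i$ as in the strict ML definition applied with $N=R$ (or with $N=M/M^*(m)M$). The equality of images $\Ima(\Hom_R(M,R)\to\Hom_R(F_i,R))=\Ima(\Hom_R(F_j,R)\to\Hom_R(F_i,R))$, combined with the finite presentation of $F_j$ and the factorization $F_i\to F_j\to M$, lets me find finitely many $w_k\in M^*$ and $m_k\in M$ with $m=\sum_k w_k(m)m_k$, which is exactly $m\in M^*(m)\cdot M$. The main obstacle I anticipate is making this last step fully rigorous: extracting, from the abstract image-equality in the strict ML definition, the concrete ``locally projective'' expression $m=\sum w_k(m)m_k$ — this requires carefully chasing the dual of the transition map $F_i\to F_j$ and using that $F_j$ is finitely presented so that $\Hom_R(F_j,-)$ commutes with the relevant colimit $M=\ilim F_\ell$. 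Once that is in hand, Theorem~\ref{trace1} and Proposition~\ref{trace2} close the argument, and combined with Corollary~\ref{SQF} for the flatness bookkeeping the equivalence follows.
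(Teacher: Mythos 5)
Your overall architecture matches the paper's (both directions hinge on Corollary \ref{SQF}, Lemma \ref{lemaP} and Theorem \ref{QQ}), but there are two genuine gaps. In the forward direction, the identity you lean on, $\Hom_R(F_i,N)\cong F_i^*\otimes_R N$ for finitely presented $F_i$ and \emph{arbitrary} $N$, is false: take $R=\mathbb Z$, $F_i=N=\mathbb Z/2\mathbb Z$, so that $F_i^*\otimes_R N=0$ while $\Hom_R(F_i,N)\neq 0$. The natural map is an isomorphism only when the second argument is flat, which is exactly how Lemma \ref{lemaP} is used (with the flat module $M$ in that slot), so your proposed translation cannot reach the quantifier ``for every $R$-module $N$'' in the strict Mittag--Leffler condition. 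The mechanism you are missing is the paper's: from $\Hom_{\mathcal R}(\mathcal M^*,\mathcal N)=\ilim{j}\Hom_{\mathcal R}(\mathcal P_j^*,\mathcal N)$ one builds a chain $\mathcal M^*\to\mathcal P_j^*\to\mathcal N\to\mathcal P_i^*$ (where $\mathcal M^*\to\mathcal N$ is the epimorphism onto the image of $\mathcal M^*\to{\mathcal P_i^*}_{qc}$ provided by \ref{QQ} and \ref{lemaP}), whence $\Ima(\mathcal M^*(S)\to\mathcal P_i^*(S))=\Ima(\mathcal P_j^*(S)\to\mathcal P_i^*(S))$ for \emph{every} commutative $R$-algebra $S$; evaluating at the square-zero extension $S=R\oplus Q$ then yields $\Ima(\Hom_R(M,Q)\to\Hom_R(P_i,Q))=\Ima(\Hom_R(P_j,Q)\to\Hom_R(P_i,Q))$ for an arbitrary module $Q$. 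Without that evaluation trick the forward direction does not close.

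In the converse direction you take a genuinely different route, reducing via Theorem \ref{trace1} and Proposition \ref{trace2} to the implication ``flat strict Mittag--Leffler $\Rightarrow$ trace module,'' but you explicitly leave that implication unproved. That implication is the entire nontrivial content of the converse (it is Raynaud--Gruson II.2.3.4 / Garfinkel Th.~3.2, which the paper cites as known only \emph{after} giving its own proof), so as written your converse defers the whole difficulty to an external classical fact rather than establishing it. The paper instead verifies Lemma \ref{lemaD} directly: a morphism $\mathcal M^*\to\mathcal N$ factors through some $\mathcal P_i^*$; the strict ML hypothesis in its $N=R$ instance, $\Ima(M^*\to P_i^*)=\Ima(P_j^*\to P_i^*)$, identifies $\Ima(M^*\to N)$ with $\Ima(P_j^*\to N)$; and Lemma \ref{lemaP} factors $\mathcal M^*\to\mathcal P_j^*$ through ${\mathcal P_j^*}_{qc}$, forcing $\Coker(\mathcal M^*\to\mathcal N)$ to be quasi-coherent. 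Either carry out an honest elementary proof that flat strict ML modules are trace modules (extracting $m=\sum_k w_k(m)m_k$ from the image equality, which is not a formality), or adopt the paper's cokernel argument.
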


\begin{proof} Let $\{P_i\}$ be a direct system of finitely presented modules such that $M=\ilim{i} P_i$. Then, $\mathcal M^*=\plim{i} \mathcal P_i^*$.  Observe that
 $$\Hom_{\mathcal R}(\mathcal M^*,\mathcal N)\overset{\text{\ref{prop4}}}=M\otimes_R N=\ilim{i} P_i\otimes_R N \overset{\text{\ref{prop4}}}=\ilim{i}
 \Hom_{\mathcal R}(\mathcal P_i^*,\mathcal N).$$

$\Rightarrow)$  $M$ is flat, by \ref{SQF}. The natural morphism $\mathcal M^*\to \mathcal P_i^*$ factors through $\mathcal M^*\to {\mathcal P_i^*}_{qc}$, by \ref{lemaP}. The morphism $\mathcal M^*\to 
 {\mathcal P_i^*}_{qc}$ factors through an epimorphism $\mathcal M^*\to \mathcal N$, by \ref{QQ}.
 $\mathcal M^*\to \mathcal N$ factors through the natural morphism $\mathcal M^*\to \mathcal P_j^*$, for some $j$.
We have the morphisms
 $$\mathcal M^*\to \mathcal P_j^*\to \mathcal N\to \mathcal P_i^*$$
 (recall $\mathcal M^*\to \mathcal N$ is an epimorphism). 
 Then,
 $\Ima({\mathcal M^*}(S)\to {\mathcal P_i^*}(S))=\Ima({\mathcal P_j^*}(S)\to {\mathcal P_i^*}(S))$, for any  commutative $R$-algebra $S$. Taking $S=R\oplus Q$ (for any $R$-module $Q$), we obtain
 $$
\Ima(\Hom_R( M,Q)\to \Hom_R(P_i,Q))=\Ima(\Hom_R(P_j,Q)\to \Hom_R(P_i,Q)).$$ Hence, $M$ is a flat strict Mittag-Leffler module.

$\Leftarrow)$ Let $\{P_i\}$ be a direct system of finitely presented modules so that $M=\ilim{i} P_i$ and  for every $i$  there exists a $j\geq i$ such that
 $$
\Ima(M^*\to P_i^*)=\Ima(P_j^*\to P_i^*).$$ 
Let $\mathcal M^*\to \mathcal N$ be a morphism of $\mathcal R$-modules. $\mathcal M^*\to \mathcal N$ factors through the natural morphism $\mathcal M^*\to \mathcal \mathcal P_i^*$, for some $i$. There exists $j\geq i$ such that $
\Ima(M^*\to P_i^*)=\Ima(P_j^*\to P_i^*).$ 
Then, $$\Ima(M^*\to N)=\Ima(P_j^*\to N)=:N_j.$$ 
The natural morphism
$\mathcal M^*\to\mathcal P_j^*$ factors through a morphism $\mathcal  M^*\to{\mathcal P_j^*}_{qc}$, by \ref{lemaP}. We have the morphisms 
 $$\mathcal M^*\to {\mathcal P_j^*}_{qc}\to \mathcal P_j^*\to \mathcal N$$ 
The composite morphism $\mathcal N_j\to \Ima(\mathcal M^*\to\mathcal N)\subseteq \Ima  ({\mathcal P_j^*}_{qc}\to \mathcal N)$ is an epimorphism. Hence,  $\Ima(\mathcal M^*\to\mathcal N)=\Ima  ({\mathcal P_j^*}_{qc}\to \mathcal N)$. Therefore,  $\Coker(\mathcal M^*\to\mathcal N)=\Coker  ({\mathcal P_j^*}_{qc}\to \mathcal N)$, which is quasi-coherent. $\mathcal M^*$ is dually separated by \ref{lemaD}.

\end{proof}

It is well known that a module is a flat strict Mittag-Leffler module iff it is a trace module (see \cite[II. 2.3.4]{RaynaudG} and \cite[Th.3.2]{Garfinkel}).

\begin{proposition} \cite[Cor. 3]{Garfinkel} Let $M$ be a finitely generated module. Then, $\mathcal M^*$  is  dually separated    iff $M$ is a projective module.\end{proposition}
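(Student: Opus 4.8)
The statement combines two facts: if $M$ is finitely generated and $\mathcal M^*$ is dually separated, then $M$ is projective; and conversely, if $M$ is projective (finitely generated or not), then $\mathcal M^*$ is dually separated. The converse is already Example~\ref{Ejem} in the text, so the real content is the forward direction. My plan is to use the characterization that $\mathcal M^*$ dually separated is equivalent to $M$ being a trace module (Corollary after \ref{trace2}), together with flatness of $M$ (Corollary~\ref{SQF}). So I reduce the problem to: a finitely generated flat trace module is projective.

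The key steps, in order, would be the following. First, invoke Corollary~\ref{SQF}: since $\mathcal M^*$ is dually separated, $M$ is flat. Second, invoke the corollary that $\mathcal M^*$ dually separated implies $M$ is a trace module (equivalently, by \ref{trace2}, every $f\colon \mathcal M^*\to\mathcal R$ factors through the quasi-coherent module associated with $\Ima f_R$). Third, use the trace-module property on generators: write $M=\langle m_1,\dots,m_r\rangle$; applying the trace condition to each $m_k$ gives $m_k\in M^*(m_k)\cdot M$, i.e. there exist $w_{k1},\dots,w_{k s}\in M^*$ and elements of $M$ expressing $m_k$ as an $R$-combination of $w_{kj}(m_k)\cdot(\text{something})$. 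More efficiently, assemble all the functionals obtained this way into a single map $\phi\colon M\to R^N$ and show that the trace property forces $M$ to be a direct summand of $R^N$: the point is that finitely many of the $w(m_i)$ generate the unit ideal after localization, or rather one produces a "dual basis"–type splitting. Concretely, for a finitely generated trace module one shows the evaluation map $M\otimes_R M^*\to R$ hits a set of elements generating $R$ locally at every maximal ideal, hence $M$ is locally free of finite rank, hence (being finitely presented, as a finitely generated flat trace/Mittag-Leffler module is finitely presented) projective. Fourth, conclude: a finitely generated, finitely presented, locally free module over any commutative ring is projective.

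The main obstacle I expect is the third step: extracting local freeness from the trace condition without circularity. The clean route is to work at a maximal ideal $\mathfrak m$. Over the local ring $R_{\mathfrak m}$, the module $M_{\mathfrak m}$ is still finitely generated, flat, and a trace module (the trace property localizes since $(M_{\mathfrak m})^*(m)\supseteq M^*(m)_{\mathfrak m}$ suffices, or use Proposition~\ref{QU} to pass to $R_{\mathfrak m}$-schemes). A finitely generated flat module over a local ring is free, so $M_{\mathfrak m}$ is free of finite rank; this holds for every $\mathfrak m$. Then $M$ finitely presented (finitely generated flat strict Mittag-Leffler modules are finitely presented — this can be quoted from the Mittag-Leffler literature or deduced from the trace condition bounding the direct system) and locally free implies projective. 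Alternatively, and perhaps more in the spirit of the paper, one can cite \cite[Cor.~3]{Garfinkel} directly for the implication trace $+$ finitely generated $\Rightarrow$ projective and merely record that the two functorial conditions are equivalent to "trace module" via the corollaries just established; in that case the proof reduces to: "It is an immediate consequence of the preceding corollary (dually separated $\iff$ trace module) and \cite[Cor.~3]{Garfinkel}." I would present the short version and remark on the localization argument as the conceptual reason.
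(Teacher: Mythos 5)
Your converse direction (citing Example \ref{Ejem}) matches the paper. The forward direction, however, has a genuine gap at the step where you pass from ``locally free of finite rank'' to ``projective''. Localizing at a maximal ideal only uses flatness: \emph{every} finitely generated flat module over any commutative ring becomes free after localization at each maximal ideal (e.g.\ for $R=\prod_{\mathbb N}k$ and $I=\oplus_{\mathbb N}k$, the cyclic flat module $R/I$ is locally free but not projective), so finite generation plus flatness plus local freeness does not yield projectivity. The trace condition must therefore do work beyond flatness, and the only place you let it act is the parenthetical claim that a finitely generated flat strict Mittag-Leffler (trace) module is finitely presented. That claim is true, but essentially \emph{because} such a module is projective; ``deduced from the trace condition bounding the direct system'' is not an argument, and quoting it from the literature amounts to quoting the very result being proved. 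An elementary repair would have to upgrade the pointwise condition $m\in M^*(m)\cdot M$ to a simultaneous dual-basis statement for the finite generating set $m_1,\dots,m_r$ --- that is where the real content lies --- or else fall back entirely on citing \cite[Cor. 3]{Garfinkel}, which proves nothing new.

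The paper's own proof avoids localization, flatness and finite presentation altogether and stays inside its functorial calculus: a surjection $R^n\to M$ dualizes to a monomorphism $\mathcal M^*\hookrightarrow\mathcal R^n$; since $\mathcal M^*$ is dually separated, Theorem \ref{QQ} factors this morphism through an epimorphism $\mathcal M^*\to\mathcal N$ onto a quasi-coherent module, which is then simultaneously a monomorphism and an epimorphism, hence an isomorphism; thus $\mathcal M^*$ is quasi-coherent, $\mathcal M=\mathcal M^{**}$ is a module scheme, and \cite{Amel2} gives that $M$ is finitely generated projective. You should either adopt that factorization argument or supply the missing dual-basis step; as written, your proof does not close.
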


\begin{proof}  
$\Rightarrow)$ Let $\mathcal R^n\to \mathcal M$ be an epimorphism. The dual morphism $\mathcal M^*\to \mathcal R^n$ is a monomorphism and it factors through an epimorphism $\mathcal M^*\to \mathcal N$.
Then, $\mathcal M^*\simeq \mathcal N$ and by \cite{Amel2} $M$ is a projective module.

$\Leftarrow)$  See Example \ref{Ejem}.

\end{proof}

\begin{theorem}  \label{BB} Let $\mathcal M^*$ be   dually separated    and $\{\mathcal N_i\}$ the set of the coherent quotient $\mathcal R$-modules of $\mathcal M^*$. Then,
$\mathcal M =\ilim{i\in I}  \mathcal N_i^*.$
\end{theorem}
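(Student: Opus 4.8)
The plan is to construct the natural morphism $\ilim{i\in I}\mathcal N_i^*\to \mathcal M$ and prove it is an isomorphism, reducing everything to the known description of flat Mittag-Leffler modules. Throughout, $M$ is a flat strict Mittag-Leffler module by the preceding proposition (since $\mathcal M^*$ is dually separated), hence in particular a flat Mittag-Leffler module, so all the results of this section apply.

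First I would set up the direct system. Each coherent quotient $\pi_i\colon\mathcal M^*\twoheadrightarrow\mathcal N_i$ dualizes, using that $\mathcal M^{**}=\mathcal M$ (Theorem \ref{reflex}), to a morphism $\pi_i^*\colon\mathcal N_i^*\to\mathcal M$; since the dual of an epimorphism is a monomorphism, $\pi_i^*$ is a monomorphism, so every $\mathcal N_i^*$ is an $\mathcal R$-submodule scheme of $\mathcal M$. The family is directed: given $\mathcal N_1,\mathcal N_2$, the morphism $\mathcal M^*\to\mathcal N_1\times\mathcal N_2$ (a quasi-coherent module) factors, by Theorem \ref{QQ}, through the quasi-coherent module associated with the image of its $R$-points, which is a coherent quotient $\mathcal N_3$ of $\mathcal M^*$ dominating both $\mathcal N_1$ and $\mathcal N_2$; and whenever $\mathcal N_i$ dominates $\mathcal N_j$, the induced transition map $\mathcal N_j^*\to\mathcal N_i^*$ is again a monomorphism compatible with the $\pi^*$'s. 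Hence $\ilim{i\in I}\mathcal N_i^*$ exists, and the natural morphism $\ilim{i\in I}\mathcal N_i^*\to\mathcal M$ is a monomorphism, being a filtered colimit of monomorphisms computed pointwise; equivalently, $\ilim{i\in I}\mathcal N_i^*$ is the union of the $\mathcal N_i^*$ inside $\mathcal M$.

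The key step is to show that $\{\mathcal N_i^*\}$ is cofinal in the family of all $\mathcal R$-submodule schemes of $\mathcal M$. Let $\iota\colon\mathcal V^*\hookrightarrow\mathcal M$ be such a submodule scheme. Dualizing, and using Theorem \ref{reflex}, we get $\iota^*\colon\mathcal M^*\to\mathcal V$, and by Theorem \ref{QQ} this factors as $\mathcal M^*\twoheadrightarrow\mathcal N\xrightarrow{q}\mathcal V$, where $\mathcal N$ is the quasi-coherent module associated with $\Ima\iota^*_R$; thus $\mathcal N=\mathcal N_i$ for some $i\in I$. Dualizing this factorization, and using that $f^{**}=f$ for morphisms between reflexive modules, we obtain $\iota=\pi_i^*\circ q^*\colon\mathcal V^*\xrightarrow{q^*}\mathcal N_i^*\xrightarrow{\pi_i^*}\mathcal M$; since $\iota$ is a monomorphism so is $q^*$, whence $\mathcal V^*\subseteq\mathcal N_i^*$ as $\mathcal R$-submodules of $\mathcal M$.

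Finally, by \cite{Pedro2} a flat Mittag-Leffler module $M$ satisfies that $\mathcal M$ is the direct limit of its $\mathcal R$-submodule schemes; combining this with cofinality gives $\mathcal M=\bigcup_{i\in I}\mathcal N_i^*=\ilim{i\in I}\mathcal N_i^*$. I expect the main obstacle to be the careful bookkeeping of the dualizations — that $\mathcal M^{**}=\mathcal M$ and $\mathcal N_i^{**}=\mathcal N_i$, that $(-)^*$ sends epimorphisms to monomorphisms, and that $\iota^{**}=\iota$ under the canonical identifications — all routine but needing to be handled precisely. Alternatively, one can prove surjectivity of $\ilim{i}\mathcal N_i^*\to\mathcal M$ directly: an element $m\in\mathcal M(S)=M\otimes_RS$ corresponds by Theorem \ref{prop4} to a morphism $\mathcal M^*\to\mathcal S$, which factors through some $\mathcal N_i$ by Theorem \ref{QQ}; evaluating the dual morphism $\mathcal S^*\to\mathcal M$ at the multiplication map $S\otimes_RS\to S$, viewed as an element of $\mathcal S^*(S)$, recovers $m$ and exhibits it in the image of $\mathcal N_i^*(S)$.
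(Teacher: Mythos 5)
Your argument is correct, but your primary route is not the one the paper takes. The paper's proof is literally ``proceed as in the proof of Theorem \ref{P2}'': one checks directedness exactly as you do (via Theorem \ref{QQ} applied to $\mathcal M^*\to\mathcal N_1\times\mathcal N_2$), and then shows directly that the natural map $\ilim{i}\,\mathcal N_i^*(S)\to\Hom_{\mathcal R}(\mathcal M^*,\mathcal S)=\mathcal M(S)$ is injective (each $\mathcal N_i^*(S)\to\mathcal M(S)$ is injective because $\pi_i$ is an epimorphism) and surjective (every morphism $\mathcal M^*\to\mathcal S$ factors through some coherent quotient $\mathcal N_i$, again by Theorem \ref{QQ}). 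This is precisely the ``alternative'' you sketch in your final sentences, and it is self-contained, needing only \ref{QQ}, \ref{prop4}/\ref{adj2} and $\mathcal M^{**}=\mathcal M$. Your main route instead imports two heavier inputs: the proposition preceding \ref{BB} identifying dual separatedness of $\mathcal M^*$ with the flat strict Mittag-Leffler property, and the theorem of \cite{Pedro2} that for a flat Mittag-Leffler module $\mathcal M$ is the direct limit of all its $\mathcal R$-submodule schemes; you then reduce to a cofinality statement, which you prove correctly by dualizing an arbitrary submodule scheme $\mathcal V^*\hookrightarrow\mathcal M$ and factoring $\iota^*$ through the coherent module associated with $\Ima \iota^*_R$ via \ref{QQ}. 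This is not circular (neither input depends on \ref{BB}), and the cofinality statement has independent interest --- it exhibits the duals of the coherent quotients of $\mathcal M^*$ as a cofinal family among all submodule schemes of $\mathcal M$ --- but it is strictly more machinery than the statement requires, since the direct surjectivity argument makes the appeal to \cite{Pedro2} unnecessary. The only point worth making explicit in your cofinality step is the identity $\iota^{**}=\iota$ under the canonical identifications, which follows from naturality of the evaluation morphism into the double dual.
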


\begin{proof}   Proceed  as in the proof of Theorem \ref{P2}
to prove that $\mathcal M=\ilim{i\in I}  \mathcal N_i^*$. \end{proof} 

\begin{theorem} \label{5.11} $\mathcal M^*$ is   dually separated    iff $M$ is a flat Mittag-Leffler module and the morphism
$$M\otimes_R R/\mathfrak m\to \Hom_R(M^*,R/\mathfrak m)$$
is injective,  for every maximal ideal $\mathfrak m\subset R$.
\end{theorem}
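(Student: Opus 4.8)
The plan is to prove both implications directly, reducing everything to the criterion of Theorem \ref{5.1} (injectivity of $M\otimes_R N\to \Hom_R(M^*,N)$ for all $N$) together with the known equivalence ``$\mathcal M^*$ dually separated $\iff$ $M$ flat strict Mittag-Leffler'' established just above. Since flat strict Mittag-Leffler modules are in particular flat Mittag-Leffler, and taking $N=R/\mathfrak m$ in Theorem \ref{5.1} gives injectivity of $M\otimes_R R/\mathfrak m\to \Hom_R(M^*,R/\mathfrak m)$, the implication $\Rightarrow)$ is immediate. The whole content is the converse.

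For $\Leftarrow)$, I would start from the hypothesis that $M$ is flat Mittag-Leffler. By statement (2)$\Leftrightarrow$(1) of the Mittag-Leffler characterization recalled in the Introduction (from \cite{Pedro2}), $\mathcal M$ is the direct limit of its $\mathcal R$-submodule schemes $\mathcal N_i^*$, and the kernel of any morphism $\mathcal R^n\to\mathcal M$ is an $\mathcal R$-module scheme. Dualizing, $\mathcal M^*=\plim{i}\mathcal N_i$, so $\mathcal M^*$ is proquasi-coherent-like; to show it is dually separated it suffices, by Lemma \ref{lemaD} (or Theorem \ref{W1}), to show that every morphism $\mathcal M^*\to\mathcal R$, equivalently every element $m\in M=\Hom_{\mathcal R}(\mathcal M^*,\mathcal R)$, factors through the quasi-coherent module associated with $\Ima f_R=M^*(m)$; i.e. that $M$ is a trace module, $m\in M^*(m)\cdot M$ for all $m$ (using Theorem \ref{trace1} and Proposition \ref{trace2}). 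The Mittag-Leffler hypothesis should give that $M^*(m)$ is a finitely generated ideal, say generated by $w_1(m),\dots,w_r(m)$, and that the natural map $\mathcal M^*\to\mathcal N_i$ for suitable $i$ already ``sees'' $m$ through finitely presented data.

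The key new input is the hypothesis at maximal ideals. Having reduced to showing $m\in I\cdot M$ where $I=M^*(m)$ is finitely generated, I would argue locally: it suffices to show $m\in I\cdot M$ after localizing at every maximal ideal $\mathfrak m$, and even that $m\equiv 0$ in $M\otimes_R R/\mathfrak m$ whenever $I\subseteq\mathfrak m$ — this is exactly where injectivity of $M\otimes_R R/\mathfrak m\to\Hom_R(M^*,R/\mathfrak m)$ enters, since the image of $m$ in $\Hom_R(M^*,R/\mathfrak m)$ is $w\mapsto \overline{w(m)}$, which vanishes precisely when $M^*(m)=I\subseteq\mathfrak m$. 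Combined with the finite generation of $I$ (so that $I\cdot M$ is detected by the finitely many relevant maximal ideals, via an argument on the finitely presented approximations $P_j$) and Nakayama-type bookkeeping, one concludes $m\in I\cdot M$, hence $M$ is a trace module, hence flat strict Mittag-Leffler, hence $\mathcal M^*$ is dually separated.

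The main obstacle I anticipate is the passage from ``$m$ dies in every $M\otimes_R R/\mathfrak m$ with $I\subseteq\mathfrak m$'' to the honest global statement ``$m\in I\cdot M$''. This is not formal for general $M$ — it needs the Mittag-Leffler structure to transfer the question to a finitely presented $P_j$ with $M^*\to P_j^*$ controlling the image (via Lemma \ref{lemaP} and the factorization $\mathcal M^*\to {\mathcal P_j^*}_{qc}\to\mathcal P_j^*$ used in the previous proposition's proof), where $M\otimes_R P_j^*/I\cdot(M\otimes_R P_j^*)$ or an analogous finitely presented cokernel can be checked to vanish at all maximal ideals and hence to vanish. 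I expect the cleanest route is: use flat Mittag-Leffler to write the relevant obstruction as a finitely presented $R$-module $C$ with $C\otimes_R R/\mathfrak m=0$ for all $\mathfrak m$, forcing $C=0$; the hypothesis on $R/\mathfrak m$ is precisely what guarantees the stalk-vanishing at the maximal ideals not containing $I$ being automatic and at those containing $I$ being supplied by injectivity.
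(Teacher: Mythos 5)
Your forward implication is fine and matches the paper (flat strict Mittag-Leffler implies flat Mittag-Leffler, and Theorem \ref{5.1} with $N=R/\mathfrak m$ gives the injectivity). The converse, however, has a genuine gap exactly where you anticipate it. From the hypothesis you correctly deduce that $m\in\mathfrak m M$ for every maximal ideal $\mathfrak m$ containing $I=M^*(m)$, but this is strictly weaker than the local condition $m\in I_{\mathfrak m}M_{\mathfrak m}$ for all $\mathfrak m$ that the local--global principle actually requires: already for $R=M=k[x]$, $I=(x^2)$, $m=x$, one has $m\in\mathfrak m M$ for the unique maximal ideal containing $I$, yet $m\notin IM$. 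So the step from ``$m$ dies in $M\otimes_R R/\mathfrak m$ for all $\mathfrak m\supseteq I$'' to ``$m\in IM$'' is not a formal implication, and no Nakayama bookkeeping on $M/IM$ repairs it, since $M/IM$ is not finitely generated and your sketch never actually constructs the finitely presented obstruction module $C$ it appeals to.

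The paper closes this gap with a different device, and the missing ingredient is the specific output of \cite[4.5,4.1]{Pedro2} for flat Mittag-Leffler modules: any $f\colon\mathcal M^*\to\mathcal N$ factors through $f'\colon\mathcal M^*\to\mathcal N'$ with $N'\subseteq N$ finitely generated and, crucially, with the dual morphism ${f'}^*\colon{\mathcal N'}^*\to\mathcal M$ a monomorphism. The finitely generated obstruction is then $\Coker(M^*\to N')$; if it is nonzero, Nakayama yields a maximal ideal $\mathfrak m$ and an epimorphism $N'/\mathfrak m N'\to R/\mathfrak m$ killing the image of $M^*$, hence a morphism $\mathcal M^*\to\widetilde{R/\mathfrak m}$, i.e.\ an element of $M\otimes_R R/\mathfrak m$, which is \emph{nonzero} precisely because ${f'}^*$ is a monomorphism, yet maps to zero in $\Hom_R(M^*,R/\mathfrak m)$ --- contradicting the hypothesis, and then Theorem \ref{QQ} concludes. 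Your sketch never secures this nonzeroness (the witness is not the class of $m$ in $M/\mathfrak m M$ but $f'$ composed with the projection onto $R/\mathfrak m$), and that certification is the actual crux of the converse.
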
 
\begin{proof} $\Rightarrow)$ By Theorem  \ref{BB} and \cite[4.5]{Pedro2}, $M$ is a flat Mittag-Leffler module. Now, the direct part of this proposition is a consequence of Theorem \ref{5.1}.

$\Leftarrow)$ Let $f\colon \mathcal M^*\to \mathcal N$ be a morphism of $\mathcal R$-modules. By \cite[4.5,4.1]{Pedro2}, there exists a finitely generated submodule $N'\subset N$ such that $f$ factors through a morphism $f'\colon \mathcal M^*\to \mathcal N'$ and the dual morphism ${f'}^*\colon  {\mathcal N'}^*\to 
\mathcal M$ is a monomorphism. If we prove that $f'_R\colon M^*\to N'$ is an epimorphism, we are done by \ref{QQ}.
Assume $f'_R$ is not an epimorphism. By Nakayama's Lemma, there exists a maximal ideal $\mathfrak m\subset R$ such that the composite morphism
$M^*\to N'\to N'/\mathfrak m N'$ is not an epimorphism. Then  there exists an epimorphism  $N'/\mathfrak m N'\to R/\mathfrak m$
such that the composite morphism $M^*\to R/\mathfrak m$ is zero. 
Let $\tilde{R/\mathfrak m}$ be the quasi-coherent module associated with $R/\mathfrak m$. 
We have a morphism $\mathcal M^*\to \tilde{R/\mathfrak m}$
which is not zero (because the dual morphism is a monomorphism) 
and $M^*\to R/\mathfrak m$ is zero. This is contradictory because the composite morphism
$$\Hom_{\mathcal R}(\mathcal M^*,\tilde{R/\mathfrak m})=
M\otimes_R R/\mathfrak m\to \Hom_R(M^*,R/\mathfrak m)$$
is injective, by Theorem \ref{5.1}.
\end{proof}

%
%
%
%

\begin{theorem} \label{dl+}  Let $R$ be a noetherian ring. Let $M$ be a flat $R$-module such that
there exists a set of finitely generated submodules of $M$, $\{M_i\}$, so that
$M=\cup_{i\in I} M_i$ and the morphisms $M^*\to M_i^*$ are surjective. Then, $\mathcal M^*$ 
is   dually separated.
\end{theorem}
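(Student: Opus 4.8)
The plan is to deduce the statement from Theorem~\ref{5.11}: it suffices to prove that $M$ is a flat Mittag--Leffler module and that the natural morphism $M\otimes_R R/\mathfrak m\to\Hom_R(M^*,R/\mathfrak m)$ is injective for every maximal ideal $\mathfrak m\subset R$.

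\emph{Step 1 (flat Mittag--Leffler).} Since $R$ is noetherian, every $M_i$ is finitely presented, and I would first arrange that the family $\{M_i\}$ is directed by inclusion, so that $M=\ilim{i} M_i$; this is the point of Step~1 that needs the most care, since it is not obvious that $M^*\to N^*$ remains surjective for finite sums $N$ of the $M_i$. Granting a directed family, for $i\le j$ the composite $M^*\to M_j^*\to M_i^*$ is surjective by hypothesis, hence so is $M_j^*\to M_i^*$; thus every transition map of the inverse system $\{M_i^*\}$ is surjective and $\Ima(M^*\to M_i^*)=M_i^*=\Ima(M_j^*\to M_i^*)$ for all $j\ge i$. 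So $\{M_i^*\}$ is a Mittag--Leffler inverse system, and since $M$ is flat and the $M_i$ are finitely presented, $M$ is a flat Mittag--Leffler module (cf.\ \cite[4.5]{Pedro2} and the equivalences recalled in the Introduction).

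\emph{Step 2 (local injectivity).} Fix a maximal ideal $\mathfrak m$. Tensoring the surjection $M^*\to M_i^*$ with $R_{\mathfrak m}$ and using that $M_i$ is finitely presented (so $(M_i^*)_{\mathfrak m}=((M_i)_{\mathfrak m})^*$) gives a surjection $(M_{\mathfrak m})^*\to((M_i)_{\mathfrak m})^*$; moreover $M_{\mathfrak m}=\bigcup_i (M_i)_{\mathfrak m}$ and $M_{\mathfrak m}$ is flat, so I may assume $R$ is local with maximal ideal $\mathfrak m$ and residue field $\kappa$. Then it suffices to show: if $m\in M$ and $m\notin\mathfrak m M$, then $w(m)\notin\mathfrak m$ for some $w\in M^*$. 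By Step~1 and flatness, write $M=\ilim{\lambda} F_\lambda$ with $F_\lambda$ finitely generated free and $\{F_\lambda^*\}$ Mittag--Leffler; then $M^*=\plim{\lambda} F_\lambda^*$. Let $m$ be represented by $m_\mu\in F_\mu$. Since $\bar m\ne 0$ in $M\otimes_R\kappa=\ilim{\lambda}(F_\lambda\otimes_R\kappa)$, the image $m_\lambda$ of $m_\mu$ in $F_\lambda$ satisfies $m_\lambda\notin\mathfrak m F_\lambda$ for every $\lambda\ge\mu$. Choose $\nu\ge\mu$ with $\Ima(F_\nu^*\to F_\mu^*)=\Ima(M^*\to F_\mu^*)$ (Mittag--Leffler). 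As $m_\nu\notin\mathfrak m F_\nu$ and $R$ is local, some coordinate projection $\psi\colon F_\nu\to R$ has $\psi(m_\nu)$ a unit; writing $\phi_{\mu\nu}\colon F_\mu\to F_\nu$ for the transition map, $\psi\circ\phi_{\mu\nu}$ lies in $\Ima(F_\nu^*\to F_\mu^*)=\Ima(M^*\to F_\mu^*)$, hence lifts to some $w\in M^*$, and $w(m)=(\psi\circ\phi_{\mu\nu})(m_\mu)=\psi(m_\nu)\notin\mathfrak m$.

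Combining Steps~1 and~2 with Theorem~\ref{5.11} then yields that $\mathcal M^*$ is dually separated. The main obstacle is Step~1, namely reducing to a directed family and extracting the Mittag--Leffler property of $\{M_i^*\}$; Step~2 is routine once one localizes, and it is there that noetherianity (to pass the hypothesis to $R_{\mathfrak m}$) and the Mittag--Leffler property are used in an essential way.
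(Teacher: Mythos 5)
Your overall strategy (reduce to Theorem \ref{5.11}) is legitimate, but both steps have genuine gaps. In Step 1, the conclusion that $M$ is a flat Mittag--Leffler module does not follow from what you actually verify: you show that the transition maps of the inverse system $\{M_i^*\}=\{\Hom_R(M_i,R)\}$ are surjective, i.e.\ you check the stabilization of images only for the single test module $N=R$, whereas the Mittag--Leffler property of $M$ requires the stabilization of $\Ima(\Hom_R(M_j,N)\to \Hom_R(M_i,N))$ for \emph{every} $R$-module $N$ (this is how it is stated in the Introduction and used in \cite[4.5]{Pedro2}), and surjectivity of $\Hom_R(M_j,R)\to\Hom_R(M_i,R)$ gives no control over $\Hom_R(M_j,N)\to\Hom_R(M_i,N)$ for general $N$. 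The implication is true, but it needs an argument; for instance, since $M$ is flat and $M_i$ is finitely presented (noetherianity), the inclusion $M_i\hookrightarrow M$ factors as $M_i\overset{\alpha}{\to} R^n\overset{\beta}{\to} M$, and lifting the coordinates $\alpha_j\in M_i^*$ to $w_j\in M^*$ gives $m=\sum_j w_j(m)\beta(e_j)$ for all $m\in M_i$; this is Zimmermann-Huisgen's local projectivity and it also disposes of your Step 2 without localizing, because $w_j(m)\in\mathfrak m$ for all $j$ would force $m\in\mathfrak m M$.

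Step 2's localization is itself not a valid reduction: the statement to be proved concerns $\Hom_R(M^*,R/\mathfrak m)$ with $M^*=\Hom_R(M,R)$, but after replacing $R$ by $R_{\mathfrak m}$ your separating functional $w$ lives in $\Hom_{R_{\mathfrak m}}(M_{\mathfrak m},R_{\mathfrak m})$. Since $M$ is not finitely presented, the natural map $(M^*)_{\mathfrak m}\to (M_{\mathfrak m})^*$ need not be surjective, so $w$ need not come from $M^*$, and injectivity of $M_{\mathfrak m}\otimes\kappa(\mathfrak m)\to\Hom_{R_{\mathfrak m}}((M_{\mathfrak m})^*,\kappa(\mathfrak m))$ does not yield injectivity of $M\otimes_R R/\mathfrak m\to\Hom_R(M^*,R/\mathfrak m)$. (Your acknowledged worry about directing the family $\{M_i\}$ is comparatively minor: the paper's own proof also tacitly assumes that every finite subset of $M$ lies in some $M_j$.) Note finally that the paper's argument is much shorter and bypasses Theorem \ref{5.11} entirely: any $f\in\Hom_{\mathcal R}(\mathcal M^*,\mathcal N)=M\otimes_R N$ involves only finitely many elements of $M$, hence factors through $\mathcal M^*\to\mathcal M_j^*$ for some $j$, hence through $\mathcal M^*\to{\mathcal M_j^*}_{qc}$ by Lemma \ref{lemaP}; the latter morphism is an epimorphism by the hypothesis on $M^*\to M_j^*$, and one concludes with Theorem \ref{QQ}.
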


\begin{proof} Consider a morphism $f\colon \mathcal M^*\to \mathcal N$. Then,
$f=\sum_i m_i\otimes n_i\in M\otimes N=\Hom_{\mathcal R}(\mathcal M^*,\mathcal N)$.
Let $M_j$ be such that $m_i\in M_j$, for any $i$. Then, $f$ factors through $\mathcal M^*\to \mathcal M^*_j$. By \ref{lemaP}, $\Hom_{\mathcal R}(\mathcal M^*,{\mathcal M^*_j}_{qc})=\Hom_{\mathcal R}(\mathcal M^*,\mathcal M^*_j).$
Then, $f$  (uniquely) factors through a morphism
$\mathcal M^*\to {\mathcal M_j^*}_{qc}$. By the hypothesis, this morphism is an epimorphism. By Lemma \ref{QQ}, $\mathcal M^*$ is  dually separated.

\end{proof}

\begin{corollary} Let $R$ be a Dedekind domain. An $\mathcal R$-module $\mathcal M^*$  is  dually separated    iff  $M$ is the direct limit of its finitely generated projective submodules that are direct summands.\end{corollary}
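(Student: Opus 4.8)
The plan is to derive this as a direct consequence of Theorem \ref{dl+}, Theorem \ref{5.11}, and the well-understood structure of finitely generated modules over a Dedekind domain. The key point is that over a Dedekind domain, a finitely generated module is projective if and only if it is torsion-free, and moreover every finitely generated torsion-free module is projective. So the hypothesis ``$M$ is the direct limit of its finitely generated projective submodules that are direct summands'' should be reconciled with the hypotheses of Theorem \ref{dl+} (flatness plus surjectivity of $M^* \to M_i^*$) via this structure theory.

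First I would prove the forward implication: suppose $\mathcal M^*$ is dually separated. By Corollary \ref{SQF}, $M$ is flat; since $R$ is a Dedekind domain (hence a Pr\"ufer domain), $M$ is a direct limit of its finitely generated submodules, each of which is flat, hence torsion-free, hence projective. It remains to show each such finitely generated projective submodule $M_i$ can be taken to be a direct summand and that $M^*\to M_i^*$ is surjective. Here I would invoke Theorem \ref{5.11} together with the Mittag-Leffler property: since $M$ is a flat strict Mittag-Leffler module, one uses \cite[4.5,4.1]{Pedro2} to replace $M_i$ by a finitely generated submodule $M_i'$ whose inclusion has dual $\mathcal M\to \mathcal M_i'^*$... more directly, using the argument in the proof of Theorem \ref{5.11}, the morphism $M^* \to M_i^*$ is surjective for a cofinal system of $M_i$, and over a Dedekind domain a finitely generated projective submodule through which such a surjection factors can be arranged to be a direct summand because $M/M_i$ is again torsion-free (as $M_i$ is ``pure'', being a submodule with $M^*\twoheadrightarrow M_i^*$) hence flat, and a pure finitely generated submodule of a flat module over a coherent ring is a direct summand.

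For the converse, suppose $M = \ilim{i} M_i$ where each $M_i$ is finitely generated projective and a direct summand of $M$. Then $M$ is flat (direct limit of projectives). Since $M_i$ is a direct summand, the restriction map $M^* = \Hom_R(M,R) \to \Hom_R(M_i,R) = M_i^*$ is surjective. Thus all the hypotheses of Theorem \ref{dl+} are satisfied (a Dedekind domain is noetherian), and we conclude $\mathcal M^*$ is dually separated.

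The main obstacle I anticipate is the forward direction, specifically verifying that the finitely generated projective submodules can simultaneously be arranged to be \emph{direct summands} and to have surjective dual restriction maps, i.e. extracting from dual separatedness the precise cofinal family. The cleanest route is probably: dual separatedness $\Rightarrow$ flat strict Mittag-Leffler $\Rightarrow$ (by \cite[4.5]{Pedro2} and Theorem \ref{BB}) $\mathcal M$ is a direct limit of $\mathcal N_i^*$ with $M^*\to N_i$ surjective, where each $N_i$ is a finitely presented, hence (Dedekind) a direct sum of a projective and a torsion part; surjectivity of $M^*\to N_i$ forces the torsion part to vanish since $M^*$ is torsion-free (as $M$ is flat and $R$ a domain, $M^*\subseteq \prod R$ via Proposition \ref{4.4}), so $N_i$ is projective, and one identifies $\mathcal N_i^*$ inside $\mathcal M$ with a finitely generated projective direct summand. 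I would need to check carefully that this identification lands $N_i^*$ as an honest submodule of $M$ that is a direct summand, using that $\mathcal N_i^*\to\mathcal M$ is a monomorphism split by the surjection $\mathcal M^* \to \mathcal N_i$ dualized.
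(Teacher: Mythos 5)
Your converse direction is exactly the paper's and is fine: a Dedekind domain is noetherian, each $M_i$ being a direct summand makes $M^*\to M_i^*$ surjective, $M$ is flat as a direct limit of projectives, and Theorem \ref{dl+} applies. The forward direction is where the proposal has a genuine gap. Your ``cleanest route'' asserts that the surjectivity of $M^*\to N_i$ forces the torsion part of the coherent quotient $N_i$ to vanish ``since $M^*$ is torsion-free.'' A surjective image of a torsion-free module need not be torsion-free, and $\mathcal M^*$ does in general admit coherent quotients with torsion: already for $M=R$ the composite $\mathcal R=\mathcal R^*\to\widetilde{R/I}$ is an epimorphism onto a torsion module. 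So you cannot argue that every coherent quotient of $\mathcal M^*$ is projective; what is needed is that the \emph{projective} coherent quotients are cofinal among all of them. Your fallback argument (that a finitely generated submodule with $M^*\twoheadrightarrow M_i^*$ is pure, and that ``a pure finitely generated submodule of a flat module over a coherent ring is a direct summand'') is left unproved and is not something you can cite from the paper; as written it does not close the gap.

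The paper supplies exactly the missing cofinality step, and it is short: given an epimorphism $\pi\colon\mathcal M^*\to\mathcal N$, choose a surjection $L=R^n\to N$; since $\Hom_{\mathcal R}(\mathcal M^*,-)=M\otimes_R(-)$ on quasi-coherent modules (Theorem \ref{prop4}) is right exact, $\pi$ lifts to some $f\colon\mathcal M^*\to\mathcal L$. Put $L'=\Ima f_R\subseteq R^n$: over a Dedekind domain $L'$ is finitely generated and torsion-free, hence projective, and by Theorem \ref{QQ} (dual separatedness) $f$ factors through an \emph{epimorphism} $\mathcal M^*\to\mathcal L'$. Thus every coherent quotient of $\mathcal M^*$ is dominated by a projective one, Theorem \ref{BB} gives $\mathcal M=\ilim{}\mathcal L'^*$, and the splitting of the surjection $M^*\to L'$ (onto a projective module) dualizes to exhibit each $L'^*$ as a finitely generated projective direct summand of $M$. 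This ``lift to a free cover and take the image'' refinement is the idea your proposal is missing; with it inserted, the rest of your outline goes through.
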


\begin{proof} $\Rightarrow)$ Let $\pi\colon \mathcal M^*\to \mathcal N$ be an epimorphism.
Let $L= R^n\to N$ be an epimorphism and $g\colon \mathcal L\to \mathcal N$ the induced morphism. There exists a morphism $f\colon \mathcal M^*\to \mathcal L$ such that the diagram
$$\xymatrix{\mathcal M^*\ar[r]^-\pi \ar[rd]_-f & \mathcal N\\ & \mathcal L\ar[u]_-g}$$ is commutative, because the morphism
$\Hom_{\mathcal R}(\mathcal M^*,\mathcal L)=M\otimes_R L\to M\otimes_R N=\Hom_{\mathcal R}(\mathcal M^*,\mathcal N)$ is surjective. Let $L'=\Ima f_{\mathbb Z}\subseteq L$. Then, $L'$ is a finitely generated  projective module, the obvious morphism $\mathcal M^*\to \mathcal L'$ is an epimorphism and we have the commutative diagram
$$\xymatrix{\mathcal M^*\ar[r] \ar[rd]  & \mathcal N\\ & \mathcal L'\ar[u]}$$ 
Then, $\mathcal M^*$ is the inverse limit of its 
coherent quotient $\mathcal R$-modules $\mathcal L'$, such that $L'$ are
finitely generated  projective modules. Equivalently, $M$ is the direct limit of its finitely generated  projective submodules that are direct summands.

$\Leftarrow)$ It is a consequence of Theorem \ref{dl+}.

\end{proof}

\begin{corollary} Let $R$ be a local ring. $\mathcal M^*$  is  dually separated     iff $M$ is the direct limit of its finite free  submodules that are direct summands.\end{corollary}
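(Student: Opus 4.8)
The plan is to prove both implications by reducing to material already in the paper, the one new ingredient being that over a local ring every finitely generated projective module is finite free.

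For the implication $(\Leftarrow)$ I would argue essentially as in Theorem \ref{dl+}, but observing that its noetherian hypothesis is not needed here. Suppose $M=\ilim{j} M_j$ is the directed union of its finite free submodules that are direct summands. Then $M$ is a directed union of free modules, hence flat. Given a morphism of $\mathcal R$-modules $f\colon\mathcal M^*\to\mathcal N$ with $\mathcal N$ quasi-coherent, write $f=\sum_{k=1}^{r}m_k\otimes n_k\in M\otimes_R N=\Hom_{\mathcal R}(\mathcal M^*,\mathcal N)$ by \ref{prop4}, and pick $j$ with $m_1,\dots,m_r\in M_j$. Then $f$ factors through the restriction morphism $\mathcal M^*\to\mathcal M_j^*$ dual to $M_j\hookrightarrow M$. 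Since $M_j$ is finite free, $\mathcal M_j^*$ is quasi-coherent, and since $M_j$ is a direct summand of $M$, the morphism $\mathcal M^*\to\mathcal M_j^*$ is an epimorphism (a functional on $M_j\otimes_R S$ extends by zero across the complementary summand, for every commutative $R$-algebra $S$). Hence $\Coker f=\Coker(\mathcal M_j^*\to\mathcal N)$ is the cokernel of a morphism between quasi-coherent modules, so it is quasi-coherent, and $\mathcal M^*$ is dually separated by Lemma \ref{lemaD}. (Equivalently one can cite Theorem \ref{dl+} together with Lemma \ref{lemaP}, noting that finite free modules are finitely presented so that noetherianity may be dropped.)

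For $(\Rightarrow)$, assume $\mathcal M^*$ is dually separated. By Corollary \ref{SQF} and the characterization proved above ($\mathcal M^*$ dually separated iff $M$ is flat strict Mittag-Leffler), $M$ is flat and, by the Raynaud--Gruson definition \cite[II 2.3.2]{RaynaudG}, $M$ is the directed union of a family $\{F_i\}$ of finitely presented submodules whose inclusions $F_i\hookrightarrow M$ are pure. For each $i$, purity of $F_i\hookrightarrow M$ plus flatness of $M$ makes $M/F_i$ flat, so $F_i$ is flat; being also finitely presented, $F_i$ is projective, and since $R$ is local $F_i$ is finite free. Moreover a finitely presented pure submodule is always a direct summand, so each $F_i$ is a finite free direct summand of $M$. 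As every finite subset of $M$ lies in some $F_i$, the family of all finite free direct summands of $M$ is directed with union $M$, i.e. $M$ is the direct limit of its finite free submodules that are direct summands.

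The main obstacle is the $(\Rightarrow)$ direction. In the Dedekind corollary the required finitely generated direct summand was produced as $\Ima f_R\subseteq R^n$, using that finitely generated submodules of free modules over a Dedekind domain are projective; this has no analogue over an arbitrary local ring, so one genuinely needs the structure theory of strict Mittag-Leffler modules to exhibit $M$ as a directed union of finitely presented pure submodules, and then the three facts ``finitely presented $+$ flat $\Rightarrow$ projective'', ``finitely generated projective over a local ring $\Rightarrow$ finite free'', and ``finitely presented pure submodule $\Rightarrow$ direct summand''. The delicate point is that it is the \emph{strict} Mittag-Leffler hypothesis, not mere Mittag-Leffler, that guarantees the finite presentation of the $F_i$: over a non-noetherian local ring a finitely generated flat module need not be free, so the finite presentation cannot be dispensed with.
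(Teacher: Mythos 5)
Your $(\Leftarrow)$ direction is correct and is essentially the paper's own argument: write $f=\sum_k m_k\otimes n_k$ via Theorem \ref{prop4}, factor it through the epimorphism $\mathcal M^*\to\mathcal M_j^*$ onto the quasi-coherent module $\mathcal M_j^*$, and conclude with Lemma \ref{lemaD}. Your observation that the noetherian hypothesis of Theorem \ref{dl+} can be dropped here because finite free modules are finitely presented (so Lemma \ref{lemaP} applies) is also right.

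The $(\Rightarrow)$ direction, however, has two genuine gaps. First, the Raynaud--Gruson definition quoted in the paper presents $M$ as a direct limit of finitely presented modules $F_i$ equipped with maps $F_i\to M$ that need not be injective; it does not exhibit $M$ as a directed union of finitely presented \emph{pure submodules}. Extracting such a family of submodules from the strict Mittag-Leffler condition is a nontrivial step that you have not supplied, and over a local ring it is essentially equivalent to the statement you are trying to prove. Second, and fatally, the step ``a finitely presented pure submodule is always a direct summand'' is false: a pure exact sequence splits when its \emph{cokernel} is finitely presented (more generally, pure-projective), or when the submodule is pure-injective, but a finitely presented submodule does not suffice. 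Over the local ring $R=\mathbb Z_{(p)}$ the inclusion $R\hookrightarrow\mathbb Z_p$ is pure (the quotient is torsion-free over a DVR, hence flat), with $R$ free of rank one, yet it does not split: a retraction $\phi\colon\mathbb Z_p\to R$ with $\phi(1)=1$ would satisfy $\phi(x)\equiv x \pmod{p^n}$ for every $n$ and every $x\in\mathbb Z_p$, forcing $\phi(x)=x$ and hence $\mathbb Z_p=R$. What the paper's proof produces instead, and what purity alone cannot, is an \emph{epimorphism of functors} $g\colon\mathcal M^*\to\mathcal R^n$: one lifts an epimorphism $f\colon\mathcal M^*\to\mathcal N$ onto a coherent quotient along a minimal presentation $\mathcal R^n\to\mathcal N$ and uses Nakayama to see that the lift is again an epimorphism. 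Surjectivity of $g_R\colon M^*\to R^n$ then yields $w_1,\dots,w_n\in M^*$ with $w_i(m_j)=\delta_{ij}$ for the generators $m_1,\dots,m_n$ of the corresponding submodule $L\subseteq M$, and $m\mapsto\sum_i w_i(m)\,m_i$ is an honest retraction $M\to L$. This dual-basis datum is exactly where dual separatedness (the trace-module property) enters, and it is the ingredient missing from your purity argument.
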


\begin{proof}  $\Rightarrow)$ $\mathcal M^*$ is the inverse limit of its coherent quotient $\mathcal R$-modules. We only have to prove that every epimorphism $f\colon \mathcal M^*\to\mathcal N$ onto a coherent module factors through an epimorphism onto a free coherent module. Let ${\mathfrak m}$ be the maximal ideal of $R$. Let $R^n\to N$ be an epimorphism such that $R^n\otimes_RR/{\mathfrak m}\to N\otimes_RR/{\mathfrak m}$ is an isomorphism.
Let $\pi\colon \mathcal R^n\to\mathcal N$ be the induced epimorphism.
There exists a morphism $g\colon \mathcal M^*\to \mathcal R^n$ such that $\pi\circ g=f$, because the map
$$\Hom_{\mathcal R}(\mathcal M^*,\mathcal R^n)=M\otimes_R R^n\to M\otimes_R N=
\Hom_{\mathcal R}(\mathcal M^*,\mathcal N)$$
is surjective.  As $f_R\colon M^*\to N$ is an epimorphism, then $$(\Ima g_R)\otimes_R R/{\mathfrak m}\to R^n\otimes_R R/{\mathfrak m}=N\otimes_R R/{\mathfrak m}$$ is an epimorphism. By Nakayama's lemma $\Ima g_R=R^n$. Then, $f$ factors through the epimorphism $g\colon \mathcal M^*\to\mathcal R^n$.

$\Leftarrow)$ $M=\ilim{i} L_i$,  where $\{L_i\}$ is the set of finite free modules that are direct summands. Then, 
$\mathcal M^*=\plim{i} \mathcal L_i^*$. Let $f\colon \mathcal M^*\to\mathcal N$ be a morphism. Then,
$f\in M\otimes N=(\ilim{i}L_i)\otimes N=\ilim{i}(L_i\otimes N)$ and $f$ factors through an epimorphism $g\colon  \mathcal M^*\to\mathcal L^*_i$, for some $i$. Let $\pi\colon \mathcal L_i^*\to \mathcal N$ be a morphism such that $f=\pi\circ g$. $\Coker f=\Coker \pi$ is a quasi-coherent module. Then, $\mathcal M^*$ is    dually separated, by Theorem \ref{QQ}.

\end{proof}

%
%

\end{document}